\newtheorem{theorem}{Theorem}
\newtheorem{lemma}[theorem]{Lemma}
\newtheorem{corollary}[theorem]{Corollary}
\newtheorem{conjecture}[theorem]{Conjecture}
\newtheorem{proposition}[theorem]{Proposition}
\def\RR{{\mathbb R}}
\def\se{{\subseteq}}
\title[Filling in arithmetic groups]{Filling boundaries of coarse manifolds in semisimple and solvable arithmetic groups}
\thanks{The authors gratefully acknowledge the support of the National Science Foundation.}
\author{Mladen Bestvina, Alex Eskin, \& Kevin Wortman}
\begin{document}

\begin{abstract} 
We provide partial results towards a conjectural generalization of a theorem of Lubotzky-Mozes-Raghunathan for arithmetic groups (over number fields or function fields) that implies, in low dimensions, both polynomial isoperimetric inequalities and finiteness properties.

As a tool in our proof, we establish polynomial isoperimetric inequalities and finiteness properties for certain solvable groups that appear as subgroups of parabolic groups in semisimple groups, thus generalizing a theorem of Bux.

We also develop a precise version of reduction theory for arithmetic groups whose proof is, for the most part, independent of whether the underlying global field is a number field or a function field.
\end{abstract}

\maketitle

Our main result is Theorem~\ref{t:p} below. Before stating it, we provide some background.

\subsection{Arithmetic groups} Let $K$ be a global field (number field or function field), and let $S$ be a set of finitely many inequivalent valuations of $K$ including one from each class of archimedean valuations. The ring $\mathcal{O}_S \subseteq K$ will denote  the corresponding ring of $S$-integers.

For any $v \in S$, we let $K_v$ be the completion of $K$ with respect to $v$ so that $K_v$ is a locally compact field. 

Let $\bf G$ be a noncommutative, absolutely almost simple, $K$-isotropic $K$-group. Let $G$ be the semisimple Lie group $$G=\prod _{v\in S} {\bf G}(K_v)$$ endowed with a left-invariant metric. Notice that $|S|$ is the number of simple factors of $G$.

Under the diagonal embedding, the arithmetic group  ${\bf G} (\mathcal{O}_S)$ is a lattice in $G$. The lattice being noncocompact is equivalent to the assumption that $\bf G$ is $K$-isotropic. The metric on $G$ restricts to a metric on ${\bf G} (\mathcal{O}_S)$.

Denote the Euclidean, or geometric, rank of $G$ by $k({\bf G},S)$, so that $$k({\bf G},S)=\sum_{v\in S}\text{rank}_{K_v}\bf G$$

\subsection{Word metric for higher rank arithmetic groups}
If $k({\bf G},S)\geq 2$, then ${\bf G} (\mathcal{O}_S)$ is well-known to be finitely generated. Thus, ${\bf G} (\mathcal{O}_S)$ can be endowed with a proper left-invariant word metric.  Lubotzky-Mozes-Raghunathan \cite{L-M-R} showed that the word metric is determined by $G$ by proving

\begin{theorem}\label{t:lmr}
The diagonal embedding ${\bf G} (\mathcal{O}_S) \hookrightarrow G$ is a quasi-isometric embedding when $k({\bf G},S)\geq 2$.
\end{theorem}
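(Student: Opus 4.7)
The Lipschitz direction is immediate: any finite generating set $\Sigma$ of $\mathbf{G}(\mathcal{O}_S)$ has bounded $G$-diameter, so $d_G(1,\gamma)\leq C\,|\gamma|_\Sigma$ for a fixed constant $C$. The content of the theorem lies in the reverse inequality, which I would aim to establish in the sharp form: given $\gamma\in\mathbf{G}(\mathcal{O}_S)$ with $d_G(1,\gamma)\leq R$, produce a word of length $O(R)$ in $\Sigma$ representing $\gamma$.

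The plan is classical: reduction theory combined with commutator compression inside root groups. First I would invoke reduction theory to fix a Siegel set $\mathfrak{S}\subset G$ with $\mathbf{G}(\mathcal{O}_S)\cdot\mathfrak{S}=G$ such that only finitely many $\gamma\in\mathbf{G}(\mathcal{O}_S)$ satisfy $\gamma\mathfrak{S}\cap\mathfrak{S}\neq\emptyset$. Using the decomposition $\mathfrak{S}=K_0\cdot A^+\cdot U_0$ (compact, shifted Weyl chamber, bounded unipotent), the problem reduces, up to bounded multiplicative loss, to bounding word length in two sub-problems: (i)~torus elements $a\in A^+$ with $d_G(1,a)\leq R$, and (ii)~bounded unipotent elements of $\mathbf{G}(\mathcal{O}_S)$, possibly conjugated by such $a$.

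The main step is (i). The tool is the Steinberg commutator formula $[u_\alpha(s),u_\beta(t)]=\prod_{i,j>0}u_{i\alpha+j\beta}(c_{ij}s^it^j)$, which, iterated in a group with enough roots, produces unipotent elements whose parameters grow doubly exponentially in word length. Pairing such elements with their opposite-root-group partners then yields torus elements whose logarithmic size grows linearly with word length — exactly the compression needed. When $\text{rank}_K\mathbf{G}\geq 2$ this is carried out inside a single simple factor using the relative $K$-root system. When $\text{rank}_K\mathbf{G}=1$ but $|S|\geq 2$, one instead exploits the infinitude of $\mathcal{O}_S^*$ (via a Dirichlet-type $S$-unit theorem) to conjugate a rank-one root group by torus elements and thereby synthesize the second independent scaling direction out of the product structure of $G$.

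The main obstacle I anticipate is precisely the case $\text{rank}_K\mathbf{G}=1$ with $|S|\geq 2$: the commutators available within any single simple factor no longer suffice, and the construction must glue scalings from distinct completions $K_v$ into one word whose coefficients stay bounded and whose length stays linear in $R$. For non-split forms of $\mathbf{G}$, the relative $K$-root system is thinner than the absolute one, so the Steinberg identities must be read off the relative root datum and then coupled to the $S$-unit construction. Tracking that every intermediate estimate is linear in $R$ rather than merely polynomial — across Siegel reduction, the unit-theoretic compression, and the commutator bookkeeping — is the delicate quantitative step on which the whole argument rests.
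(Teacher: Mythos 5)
Your proposal is, in outline, the classical Lubotzky--Mozes--Raghunathan argument: reduce via a Siegel set, then compress large diagonal elements using Steinberg commutator relations among root groups, falling back on the $S$-unit theorem to manufacture a second scaling direction when $\mathrm{rank}_K\mathbf{G}=1$. That route is sound and is essentially the one in \cite{L-M-R}. It is not, however, the route taken in this paper. Here Theorem~\ref{t:lmr} is cited rather than reproved from scratch; what the paper establishes is the higher-dimensional coarse filling theorem (Theorem~\ref{t:p}), and the word-metric comparison is extracted as its $n=1$ case, in the regime $|S|\geq 2$. Concretely: given $\gamma_1,\gamma_2\in\mathbf{G}(\mathcal{O}_S)$, one first joins them by a coarse path in $G$ of linear volume (because $G$ is quasi-isometric to a product of symmetric spaces and buildings), and then uses the precise reduction theory of Proposition~\ref{p:prune} to iteratively push that path out of parabolic regions and into $\mathbf{G}(\mathcal{O}_S)$, with only linear volume loss at each stage, via the boundary-filling Proposition~\ref{p:bound}, which rests on Proposition~\ref{p:solvable} on fillings in solvable subgroups. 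The trade-off between the two approaches is real: your commutator-compression argument is hands-on, captures the full $k(\mathbf{G},S)\geq 2$ hypothesis, and is intrinsically a statement about $1$-dimensional fillings; the paper's argument is conceptual and uniform over all dimensions $n<|S|$ (so it simultaneously yields polynomial isoperimetric inequalities and finiteness properties), but as proved it recovers Theorem~\ref{t:lmr} only when $|S|\geq 2$, leaving the $|S|=1$, $\mathrm{rank}_K\mathbf{G}\geq 2$ case to \cite{L-M-R}.
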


Bux-Wortman conjectured a natural generalization to the above theorem in \cite{B-W1}.
We introduce a slight reformulation of that conjecture as Conjecture~\ref{c:p} below. The reformulated version better illustrates the lack of dependence on whether the local fields $K_v$  are archimedean.

Before stating the conjecture, we must introduce the notion of a coarse manifold.

\bigskip

\subsection{Coarse manifolds} A \emph{coarse manifold} $\Sigma$ in a metric space $X$ is a function from the vertices of a triangulated manifold $M$ into $X$. In a slight abuse of language, we refer to the image of a coarse manifold as a coarse manifold, thus a coarse manifold in $X$ will be regarded as a subset of $X$. 

Given a coarse manifold $\Sigma$, we define $\partial \Sigma$ as the restriction of the function defining $\Sigma$ to  $\partial M$.

We say $\Sigma$ has \emph{scale} $r>0$ if every pair of adjacent vertices in $M$ maps to within distance $r$ of each other in $X$. We define the \emph{volume} of $\Sigma$ to be the number of vertices in $M$.

If $M$ is an $n$-manifold, we call $\Sigma$ a coarse $n$-manifold.
If $\Sigma '$ is a coarse manifold as well whose domain is the triangulated manifold $M'$, then we say that $\Sigma$ and $\Sigma '$ have the \emph{same topological type} if $M$ and $M'$ are homeomorphic.

\bigskip

\subsection{Expanding on Lubotzky-Mozes-Raghunathan} Having introduced the proper terminology, we state

\begin{conjecture}\label{c:p}
Given ${\bf G} (\mathcal{O}_S)$ as above and a scale factor $r_1$, there exists a linear polynomial $f$ and a scale factor $r_2$ such that
if $\Sigma  \subseteq G$ is a coarse $n$-manifold of scale $r_1$, with $\partial \Sigma  \subseteq {\bf G} (\mathcal{O}_S)$, and $n < k({\bf G},S)$, then there is a coarse $n$-manifold $\Sigma '  \se  {\bf G} (\mathcal{O}_S) $ of scale $r_2$, with the same topological type as $\Sigma$, and  such that  $\partial \Sigma ' = \partial \Sigma$ and $ vol(\Sigma ' ) \leq f(vol (\Sigma )) $.
\end{conjecture}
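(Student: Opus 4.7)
The plan follows the three-part strategy that the abstract already announces: (i) a precise reduction theory for ${\bf G}(\mathcal{O}_S) \se G$ that works uniformly over number and function fields, (ii) polynomial isoperimetric inequalities for the solvable subgroups arising in parabolics, and (iii) a gluing step that assembles these ingredients into a filling of $\Sigma$.

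First I would establish a Siegel-set style decomposition of $G$ into a compact (modulo ${\bf G}(\mathcal{O}_S)$) thick part together with a finite family of cusp regions, each one asymptotic to the solvable group ${\bf P}(\mathcal{O}_S)$ for some minimal parabolic $K$-subgroup ${\bf P}$, with depth in the cusp parametrized by a positive Weyl chamber of dimension $k({\bf G},S)$. The classical archimedean account of this decomposition needs to be upgraded so that horospherical retractions, distances between cusp regions, and the equivariance of the picture are all controlled uniformly in $v \in S$, regardless of whether $K_v$ is archimedean; this is the role of the reduction-theory portion of the paper.

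Given a coarse $n$-manifold $\Sigma \se G$ of scale $r_1$ with $\partial \Sigma \se {\bf G}(\mathcal{O}_S)$, the next step would be to subdivide $\Sigma$ along the boundaries of the cusp regions. In the thick part, each vertex can be snapped to a nearest lattice point; this costs only a bounded multiplicative factor in volume and a controlled increase in scale. In a cusp region attached to ${\bf P}$, the piece of $\Sigma$ sitting inside becomes a coarse $n$-manifold in $P(K)$ whose boundary lies in ${\bf P}(\mathcal{O}_S)$, and one would invoke the solvable-group isoperimetric inequality (the generalization of Bux's theorem announced in the abstract) to fill it inside ${\bf P}(\mathcal{O}_S)$ with linear volume growth. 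The hypothesis $n < k({\bf G},S)$ enters here as the requirement that the solvable group seen in each cusp has enough rank to carry coarse $n$-manifolds admitting linear fillings.

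The hard part will be to carry out the gluing so that the resulting object $\Sigma' \se {\bf G}(\mathcal{O}_S)$ is genuinely a coarse $n$-manifold of the same topological type as $\Sigma$, rather than merely a coarse $n$-chain filling $\partial \Sigma$. The various pieces produced by the cusp arguments match only coarsely along the interfaces, and modifying them into a globally manifold-like filling while simultaneously preserving linear volume bounds is delicate. Secondary difficulties include ensuring that the solvable-group isoperimetric inequality holds uniformly across cusps of different $K$-rank and in the optimal dimension range, and that the linear (rather than merely polynomial) form of $f$ survives the assembly; these obstacles are presumably why Conjecture~\ref{c:p} itself remains open, with only the partial results claimed in the abstract proved in what follows.
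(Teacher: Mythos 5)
You have correctly recognized that the assigned statement is a \emph{conjecture} which the paper does not prove; it establishes only the weaker Theorem~\ref{t:p}, where the dimension bound drops from $n<k({\bf G},S)$ to $n<|S|$ and the filling function $f$ is merely polynomial (linear only for $n=1$). That observation is essential, and your three-ingredient outline (reduction theory, solvable isoperimetric inequalities, gluing) matches the blueprint the paper follows for its partial result. But your account diverges from the actual argument in ways worth flagging.

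First, the decomposition is not a single ``thick part plus cusps'' picture with lattice-point snapping in the thick part. Proposition~\ref{p:prune} furnishes a nested family of parabolic regions $R_{\bf Q}(t)$ indexed by the level $n=|I|$ of the defining parabolic, and the proof of Theorem~\ref{t:p} iterates: it cuts out the portion of $\Sigma$ lying deep inside a level-$n$ region, replaces it with a filling in the \emph{boundary} $\partial R_{\bf Q}$ of that region (Proposition~\ref{p:bound}), uses Proposition~\ref{p:prune}$(iv)$ to conclude that the resulting coarse manifold lies in the union of level-$(n+1)$ regions, and repeats until Proposition~\ref{p:prune}$(v)$ places everything in a bounded neighborhood of ${\bf G}(\mathcal{O}_S)$. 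Second, the solvable groups doing the isoperimetric work are the groups $U_{\Phi(I)^+}{\bf A}_I(\mathcal{O}_S)$ of Proposition~\ref{p:solvable}, not ${\bf P}(\mathcal{O}_S)$, and the filling happens in $\partial R_{\bf Q}$, not inside ${\bf P}(\mathcal{O}_S)$.

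Third, and most substantively, you identify the gluing as the hard part, whereas the paper's Section~\ref{s:proof} shows the gluing is short once Propositions~\ref{p:prune} and~\ref{p:bound} are available. The genuine obstruction to the full conjecture lies in the dimension range of the solvable filling: Proposition~\ref{p:solvable} only fills $m$-spheres for $m\le|S|-2$, and Wortman's companion result (cited in the paper) shows $({\bf U_Q A_Q})(\mathcal{O}_S)$ is \emph{not} of type $F_{|S|}$ in the function-field case, so this bound cannot be improved. Your explanation that $n<k({\bf G},S)$ enters because ``the solvable group has enough rank to carry coarse $n$-manifolds admitting linear fillings'' is therefore wrong: the solvable pieces do \emph{not} have the filling properties needed for the conjectured dimension range $n<k({\bf G},S)$, and any proof of Conjecture~\ref{c:p} must either use these pieces differently or bypass them altogether. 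This, rather than the gluing, is why the conjecture remains open.
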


The bound of $n<k({\bf G},S)$ is known to be sharp in many cases.  Indeed, Bux-Wortman showed the bound is sharp when $K$ is a function field \cite{B-W1},
 Taback showed it is sharp when ${\bf G}(\mathcal {O}_S) = {\bf SL_2}(\mathbb{Z}[1/p])$ \cite{T}, and Wortman showed it was sharp if every place in $S$ is archimedean and the $K$-type of $\bf G$ is $A_n$, $B_n$, $C_n$, $D_n$, $E_6$ or $E_7$ \cite{W1}. The bound was conjectured to be sharp in general in \cite{B-W1}.

\bigskip

Notice that Lubotzky-Mozes-Raghunathan (Theorem~\ref{t:lmr}) would quickly follow from Conjecture~\ref{c:p}. Indeed, 
let $\gamma _1,\gamma_2 \in {\bf G}(\mathcal{O}_S)$. Because $G$ is quasi-isometric to a product of symmetric spaces and Euclidean buildings, there exists $r_1>0$, $L\geq1$, and $C \geq 0$ (that depend only on $G$) and a coarse path $\Sigma \se G$ of scale $r_1$ such that $\partial \Sigma = \{\, \gamma _1 , \gamma _2 \,\}$ and the volume of $\Sigma$ is bounded above by $Ld(\gamma _1, \gamma _2) +C$. We let $r_2$ and $f$ be as in Conjecture~\ref{c:p}, so there is a coarse path $\Sigma \se {\bf G}(\mathcal{O}_S)$ of scale $r_2>0$ and volume bounded above by $f(Ld(\gamma _1, \gamma _2) +C)$. We may assume the finite generating set of ${\bf G}(\mathcal{O}_S)$ contains all elements of ${\bf G}(\mathcal{O}_S)$ whose distance from $1$ is less than $r_2$, so the theorem follows.

\subsection{Isoperimetic inequalities} 
Recall that a group $\Gamma$ is of type $F_n$ if there is a $K(\Gamma ,1)$ with finite $n$-skeleton. 

If a group $\Gamma$ is of type $F_n$, then we let $X$ be an $(n-1)$-connected CW-complex that $\Gamma$ acts on cellularly, properly, and cocompactly. Suppose $1\leq m \leq n-1$. If there are constants $L, t \geq 1$ and $C\geq 0$ such that for any cellular $m$-sphere $\Sigma \se X$ there is a cellular $(m+1)$-disk $D \se X$ such that  $\partial D =\Sigma$ and $\text{vol}(D)\leq L \text{vol}(\Sigma)^t +C$, then we say that $\Gamma$ satisfies a \emph{polynomial $m$-dimensional  isoperimetric inequality}. (Here the volume of $\Sigma$ and $D$ are the number of cells that they contain.)

If in the above $t$ can be taken to be $\frac{m+1}{m}$, then we say that $\Gamma$ satisfies a \emph{Euclidean $m$-dimensional  isoperimetric inequality}. 

Satisfying a polynomial or Euclidean $m$-dimensional  isoperimetric inequality is well-known to be a quasi-isometry invariant, so it is independent of the choice of the space $X$.

\bigskip

Using a similar argument to that which was given in the proof above that Conjecture~\ref{c:p} implies Theorem~\ref{t:lmr}, we can take any coarse sphere $\Sigma \se {\bf G}(\mathcal{O}_S)$ of dimension $m \leq k({\bf G},S) -2$, find a coarse $(m+1)$-disk  in $G$ whose boundary is $\Sigma$ and whose volume is Euclidean with respect to $\Sigma$ ($G$ is quasi-isometric to a product of symmetric spaces and Euclidean buildings so this is always possible), and then use Conjecture~\ref{c:p} to find a corresponding coarse $(m+1)$-disk in ${\bf G}(\mathcal{O}_S)$ whose boundary is $\Sigma$ and whose volume is Euclidean with respect to $\Sigma$. This brief sketch of a proof will be made precise in Section~\ref{s:finisht}, and it proves that  Conjecture~\ref{c:p} would imply

\begin{conjecture}\label{c:iso}
 ${\bf G} (\mathcal{O}_S)$ satisfies a Euclidean $m$-dimensional isoperimetric inequality  if $m \leq 
k({\bf G},S) -2 $. In particular, the Dehn function for ${\bf G} (\mathcal{O}_S)$ is quadratic if $
k({\bf G},S) \geq 3$.
\end{conjecture}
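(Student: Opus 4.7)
The plan is to realize any cellular $m$-sphere in a model space for $\mathbf{G}(\mathcal{O}_S)$ as a coarse $m$-sphere in $\mathbf{G}(\mathcal{O}_S)$, fill it by a Euclidean coarse $(m+1)$-disk inside $G$ using the Euclidean isoperimetric inequalities available for products of symmetric spaces and Euclidean buildings, and then invoke Conjecture~\ref{c:p} to replace that disk by a coarse $(m+1)$-disk in $\mathbf{G}(\mathcal{O}_S)$ whose volume has grown only by a linear factor. Since the composition of a linear function with a Euclidean polynomial is still Euclidean, this produces the desired filling.

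More precisely, fix $m \leq k(\mathbf{G},S)-2$ and a cellular model $X$ on which $\mathbf{G}(\mathcal{O}_S)$ acts cocompactly; the requisite finiteness property $F_{m+1}$ would itself be extracted from Conjecture~\ref{c:p} via a Brown-type criterion applied to the coarse fillings it supplies. Given a cellular $m$-sphere $\Sigma \subseteq X$, I would choose a coarsely Lipschitz section from the cells of $X$ to $\mathbf{G}(\mathcal{O}_S)$ to produce a coarse $m$-sphere $\widetilde{\Sigma} \subseteq \mathbf{G}(\mathcal{O}_S)$ of some scale $r_0$ and volume comparable to $\mathrm{vol}(\Sigma)$. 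Regarded as a coarse sphere in $G$ via the diagonal embedding, $\widetilde{\Sigma}$ admits a coarse $(m+1)$-disk filling $D \subseteq G$ of some scale $r_1$ with $\mathrm{vol}(D) \leq L\,\mathrm{vol}(\Sigma)^{(m+1)/m} + C$ for constants $L, C$ depending only on $G$; here I use that $G$ is quasi-isometric to a product of CAT(0) symmetric spaces and Euclidean buildings whose combined rank is $k(\mathbf{G},S)$, well above $m+1$. Applying Conjecture~\ref{c:p} with $n = m+1 < k(\mathbf{G},S)$ then produces a coarse $(m+1)$-disk $D' \subseteq \mathbf{G}(\mathcal{O}_S)$ of scale $r_2$ with $\partial D' = \widetilde{\Sigma}$ and $\mathrm{vol}(D') \leq f(\mathrm{vol}(D))$, where $f$ is linear. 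A standard thickening procedure turns $D'$ into a cellular $(m+1)$-disk in $X$ with boundary $\Sigma$ and volume controlled by $\mathrm{vol}(D')$ up to a multiplicative constant, which gives the Euclidean bound $\mathrm{vol} \leq L' \mathrm{vol}(\Sigma)^{(m+1)/m} + C'$.

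The main obstacle is producing the Euclidean coarse $(m+1)$-disk filling inside $G$ in all dimensions $m \leq k(\mathbf{G},S)-2$: for higher rank symmetric spaces this goes back to work of Leuzinger, Wenger and collaborators, and for Euclidean buildings it appears in work related to Bestvina-Eskin-Wortman, but assembling a single statement uniformly for an arbitrary product $G = \prod_{v \in S} \mathbf{G}(K_v)$ and phrasing it in the coarse-manifold framework of Conjecture~\ref{c:p} takes some care, particularly in the function field setting where the local factor is a building rather than a manifold. A secondary but important subtlety is extracting the finiteness property $F_{k(\mathbf{G},S)-1}$ from the coarse-filling conclusion of Conjecture~\ref{c:p} without circularity, and verifying that the triangulated disk provided by the conjecture can indeed be thickened into a genuine cellular chain filling $\Sigma$ with only a bounded loss in volume; these steps are what the precise version promised in Section~\ref{s:finisht} would have to nail down.
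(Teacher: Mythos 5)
Your proposal follows essentially the same route as the paper's Section~\ref{s:finisht}: project the cellular $m$-sphere to a coarse $m$-sphere in $\mathbf{G}(\mathcal{O}_S)$, fill Euclideanly in $G$ using that $G$ is quasi-isometric to a CAT(0) space, apply Conjecture~\ref{c:p} with linear $f$, and thicken back to a cellular $(m+1)$-disk in the model complex $X$. The one place you flag as an obstacle — producing the Euclidean filling inside $G$ — the paper dispatches as a standard CAT(0) fact without further comment, but otherwise the two arguments coincide.
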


Thurston's conjecture that ${\bf SL_4}(\mathbb{Z})$ has a quadratic Dehn function is a special case of Conjecture~\ref{c:iso} since $\text{rank}_{\mathbb{R}}{\bf SL_4}=3$.

As evidence for Conjecture~\ref{c:iso}, Dru\c{t}u proved that  ${\bf G} (\mathcal{O}_S)$ has a Dehn function that is bounded above by the function $x \mapsto x^{2+\epsilon}$ for any $\epsilon >0$ if $S$ contains only archimedean valuations, the $K$-rank of $\mathbf{G}$ equals $1$, and  $k({\bf G},S) \geq 3$ \cite{D}.

Young proved that if ${\bf G} (\mathcal{O}_S)=\bf{SL_n}(\mathbb{Z})$, then ${\bf G} (\mathcal{O}_S)$ has a quadratic Dehn function if $n \geq 5$  \cite{Y}. The condition $n \geq 5$ implies  $k({\bf G},S) \geq 4$.

The work of  Dru\c{t}u and Young are the only results in the literature that establish polynomial $m$-dimensional isoperimetric  inequalities for noncocompact arithmetic groups when $m \leq 
k({\bf G},S) -2 $.

\bigskip

\subsection{Main result} The main result proved in this paper is partial progress in proving Conjecture~\ref{c:p}. Namely,

\begin{theorem}\label{t:p}
Given ${\bf G} (\mathcal{O}_S)$ as above and a scale factor $r_1$, there exists a polynomial $f$  and a scale factor $r_2$ such that
if $\Sigma  \subseteq G$ is a coarse $n$-manifold of scale $r_1$, with $\partial \Sigma  \subseteq {\bf G} (\mathcal{O}_S)$, and $n < |S|$, then there is a coarse $n$-manifold $\Sigma ' \se  {\bf G} (\mathcal{O}_S) $ of scale $r_2$, with the same topological type as $\Sigma$, and such that  $\partial \Sigma ' = \partial \Sigma$ and $ vol(\Sigma ' ) \leq f(vol (\Sigma )) $.

If $n=1$, then $f$ can be taken to be linear.
\end{theorem}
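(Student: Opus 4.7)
The plan is to combine a uniform reduction theory, valid over both number and function fields, with polynomial isoperimetric inequalities for certain solvable subgroups of the $K$-parabolic subgroups. First I would set up a precise version of reduction theory that decomposes $G$ as $G^{thick} \cup \bigcup_{[P]} \NN_P$, where $[P]$ ranges over ${\bf G}(\mathcal{O}_S)$-conjugacy classes of minimal $K$-parabolic subgroups. Here $G^{thick}$ is within bounded distance of ${\bf G}(\mathcal{O}_S)$, while each cuspidal region $\NN_P$ retracts along the horospherical direction onto a set comparable to the ${\bf G}(\mathcal{O}_S)$-translates of a solvable subgroup $B_P\subseteq P$. The subgroup $B_P$ should be chosen to contain one contracting direction per place $v\in S$, yielding $|S|$ independent directions in total; this is why the bound is $n<|S|$ rather than the conjectural $n<k({\bf G},S)$.

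Second, I would prove that $B_P$ is of type $F_{|S|-1}$ and satisfies a polynomial $m$-dimensional isoperimetric inequality for every $m\leq |S|-1$. This generalizes Bux's theorem, which is the $|S|=1$ function-field case, by combining the $|S|$ local contracting directions to fill spheres of dimension up to $|S|-1$ with polynomial volume. This is where I expect the main technical effort.

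Given the two ingredients, treat a coarse $n$-manifold $\Sigma\se G$ of scale $r_1$ with $n<|S|$ and $\partial\Sigma\se{\bf G}(\mathcal{O}_S)$ as follows. After a controlled subdivision, arrange that every simplex of $\Sigma$ lies either in $G^{thick}$ or entirely inside a single cuspidal region $\NN_P$. The simplices in $G^{thick}$ are pushed at bounded cost to nearby points of ${\bf G}(\mathcal{O}_S)$. The portion of $\Sigma$ lying in $\NN_P$, a coarse manifold-with-boundary of dimension $n\leq |S|-1$ inside a space quasi-isometric to $B_P$, is then pushed outward along the horospherical direction to $\partial\NN_P\se G^{thick}$ by the isoperimetric machinery from step two, incurring a polynomial cost in volume. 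The pieces are glued across $\partial\NN_P$ and projected to ${\bf G}(\mathcal{O}_S)$ to produce $\Sigma'$.

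The main obstacle lies in step three: the replacements inside the cuspidal regions must fit together into a coarse $n$-manifold of the same topological type as $\Sigma$, not merely a coarse $n$-cycle. Ensuring that the retraction of each $\NN_P$-portion of $\Sigma$ is itself a manifold, with boundary matching the $G^{thick}$-portion along $\partial\NN_P$, requires a manifold-level strengthening of the solvable-group filling beyond a bare isoperimetric inequality. In the $n=1$ case a coarse $1$-manifold is essentially a coarse path or loop and topological type is automatic; moreover $n<|S|$ then forces $|S|\geq 2$, hence $k({\bf G},S)\geq 2$, so the linear bound on $f$ follows directly from Theorem~\ref{t:lmr}.
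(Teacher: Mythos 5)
Your overall template---reduction theory plus polynomial fillings in solvable subgroups of parabolics---is the right one, and your step two matches the paper's Theorem~\ref{p:s} / Proposition~\ref{p:solvable}.  But step three as you describe it has a genuine gap when the $K$-rank $|\Delta|$ is at least $2$, and the gap is precisely what forces the paper's proof to have the structure it does.

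You assert that after pushing the $\NN_P$-portion of $\Sigma$ outward along the horospherical direction one arrives at $\partial\NN_P\se G^{\text{thick}}$.  This is false in general.  In the paper's notation, the boundary $\partial R_{\bf Q}(t_0)$ of a minimal parabolic region consists of points $\gamma f u m a$ with $a\in A^+_\emptyset(t_0)$ and $|\alpha(a)|$ at the threshold $t_0$ for only \emph{one} simple root $\alpha\in\Delta$; the remaining simple roots $\beta\neq\alpha$ can still have $|\beta(a)|$ arbitrarily large.  Such points remain arbitrarily far from ${\bf G}(\mathcal{O}_S)$ --- they sit deep inside the cuspidal region of the larger parabolic ${\bf P}_{\{\alpha\}}$.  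This is exactly what part $(iv)$ of Proposition~\ref{p:prune} records: the set you get after evacuating the level-$0$ parabolic regions lands in $\bigcup_{{\bf Q}\in\mathcal{P}(1)}R_{\bf Q}B_1$, not in a neighborhood of ${\bf G}(\mathcal{O}_S)$.  Consequently, the push-and-glue cannot be done in a single step.  The paper's proof of Theorem~\ref{t:p} (Section~\ref{s:proof}) instead iterates: it produces coarse manifolds $\Sigma_0,\Sigma_1,\ldots,\Sigma_{|\Delta|}$, where $\Sigma_{n+1}$ is obtained from $\Sigma_n$ by applying Proposition~\ref{p:bound} inside the level-$n$ parabolic regions (those ${\bf Q}$ conjugate to ${\bf P}_I$ with $|I|=n$) and then appealing to parts $(iv)$ and $(v)$ of Proposition~\ref{p:prune} to see that $\Sigma_{n+1}$ lands in the level-$(n+1)$ regions.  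Only after exhausting all $|\Delta|$ levels does $\Sigma_{|\Delta|}$ lie in ${\bf G}(\mathcal{O}_S)B_{|\Delta|}$.  Your one-step scheme works only when $|\Delta|=1$; you need to replace it with this induction over the parabolic hierarchy, together with a statement like Proposition~\ref{p:bound} applicable to every parabolic ${\bf P}_I$, $I\subsetneq\Delta$, not just the minimal one.

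Two smaller points.  First, your manifold-versus-cycle worry is real but is handled in the paper by proving Proposition~\ref{p:bound} directly at the level of coarse manifolds (producing a ``polynomial homotopy'' $X$ from $q_I|_{\partial\Sigma}$ to the identity, then gluing $q_I(\Sigma)\cup X$), so no separate strengthening is needed beyond stating the solvable filling result for spheres of all dimensions up to $|S|-2$.  Second, invoking Theorem~\ref{t:lmr} to get the linear bound for $n=1$ runs against the paper's explicit goal of giving a proof that makes no use of finite generation of ${\bf G}(\mathcal{O}_S)$; the paper treats Lubotzky--Mozes--Raghunathan for $|S|\geq 2$ as a \emph{consequence} of Theorem~\ref{t:p}, so the $n=1$ linear bound must come out of the same machinery (essentially by tracking that for $n=1$ the slice-filling estimates in Section~\ref{s:sol} and the pushouts in Proposition~\ref{p:bound} can be kept linear in $d$), not from a citation of LMR.
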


If $\overline{K}$ is the algebraic closure of $K$, then for any $v \in S$ we have  $\text{rank}_{K_v}{\bf G} \leq \text{rank}_{\overline{K}}\bf G$, and it is a consequence of $\bf G$ being $K$-isotropic that $1 \leq \text{rank}_{K_v}\bf G$ for all $v \in S$. (In other words, each simple factor of $G$ has positive Euclidean rank.) Therefore,  $$|S|\leq k({\bf G},S)=\sum_{v\in S}\text{rank}_{K_v}{\bf G} \leq  |S|\text{rank}_{\overline{K}}\bf G$$ and the inequalities above are sharp.
 
Applying the argument above that Conjecture~\ref{c:p} implies Lubotzky-Mozes-Raghunathan (Theorem~\ref{t:lmr}), we see that Theorem~\ref{t:p} implies Lubotzky-Mozes-Raghunathan for those arithmetic groups for which $|S|\geq 2$.

In higher dimensions --- and similar to the reasoning above that Conjecture~\ref{c:p} implies Conjecture~\ref{c:iso} --- Theorem~\ref{t:p} implies 

\begin{corollary}\label{c:isop}
 ${\bf G} (\mathcal{O}_S)$  satisfies a polynomial $m$-dimensional isoperimetric inequality when $m \leq |S|-2$.
 In particular, the Dehn function of  ${\bf G} (\mathcal{O}_S)$ is bounded above by a polynomial if $|S|\geq 3$.
\end{corollary}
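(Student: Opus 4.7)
The plan is to mirror the argument sketched earlier in the introduction showing that Conjecture~\ref{c:p} implies Conjecture~\ref{c:iso}, but substituting Theorem~\ref{t:p} for Conjecture~\ref{c:p}. The dimensional restriction $n<|S|$ in Theorem~\ref{t:p} permits the filling of $m$-spheres by $(m+1)$-disks precisely when $m\le |S|-2$, which is the range claimed.

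First one needs to know that ${\bf G}(\mathcal{O}_S)$ is of type $F_{|S|-1}$, so that a $(|S|-2)$-connected CW-complex $X$ on which the group acts cellularly, properly, and cocompactly exists. This finiteness property is itself extracted from Theorem~\ref{t:p}: the contractibility of $G$ together with its quasi-isometric model as a product of symmetric spaces and Euclidean buildings of geometric rank $k({\bf G},S)\ge|S|$ allows any coarse $m$-sphere in ${\bf G}(\mathcal{O}_S)$ with $m\le|S|-2$ to be filled by a coarse disk in $G$; Theorem~\ref{t:p} then pushes such a filling into ${\bf G}(\mathcal{O}_S)$, and the standard argument converts this coarse data into the topological conclusion that ${\bf G}(\mathcal{O}_S)$ is of type $F_{|S|-1}$. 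Fix such an $X$.

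Given a cellular $m$-sphere $\Sigma_0\se X$ with $m\le|S|-2$, composing with an orbit map converts $\Sigma_0$ into a coarse $m$-sphere $\Sigma\se{\bf G}(\mathcal{O}_S)$ of uniform scale $r_2$ and volume comparable to that of $\Sigma_0$. Viewing $\Sigma$ as a subset of $G$, the Euclidean isoperimetric inequality valid below the geometric rank in products of symmetric spaces and Euclidean buildings produces a coarse $(m+1)$-disk $D\se G$ of some scale $r_1$ with $\partial D=\Sigma$ and $vol(D)\le L\,vol(\Sigma)^{(m+1)/m}+C$. Applying Theorem~\ref{t:p} to $D$ (legal because $m+1<|S|$) yields a coarse $(m+1)$-disk $D'\se{\bf G}(\mathcal{O}_S)$ of scale $r_2$ with $\partial D'=\Sigma$ and $vol(D')\le f(vol(D))$ for a polynomial $f$. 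Composing the two polynomial bounds gives $vol(D')$ polynomial in $vol(\Sigma)$, hence in $vol(\Sigma_0)$.

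Finally, converting $D'$ back into a cellular $(m+1)$-disk in $X$ with boundary $\Sigma_0$ is routine: adjacent vertices in the triangulation underlying $D'$ are within distance $r_2$ in ${\bf G}(\mathcal{O}_S)$ and can therefore be joined by uniformly bounded cellular paths in $X$, producing a cellular filling whose volume is polynomial in that of $\Sigma_0$. Since satisfying a polynomial isoperimetric inequality is a quasi-isometry invariant, this yields the stated conclusion; the particular case $m=1$, $|S|\ge 3$ gives the polynomial Dehn function. The main obstacle is the careful translation between the coarse-manifold formalism of Theorem~\ref{t:p} and the cellular formalism used to define polynomial isoperimetric inequalities, and in particular the verification that ${\bf G}(\mathcal{O}_S)$ has the required finiteness property $F_{|S|-1}$; the volume estimates themselves are immediate consequences of Theorem~\ref{t:p} once that formalism is in place.
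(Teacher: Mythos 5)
Your proposal is correct and follows essentially the same route as the paper's proof in Section~\ref{s:finisht}: use an orbit map to transfer a cellular $m$-sphere in a $(|S|-2)$-connected cocompact complex $X$ to a coarse $m$-sphere in ${\bf G}(\mathcal{O}_S)$, fill it by a coarse $(m+1)$-disk in $G$ using the Euclidean isoperimetric inequality available in the nonpositively curved model for $G$, apply Theorem~\ref{t:p} to replace that disk with one in ${\bf G}(\mathcal{O}_S)$ of polynomial volume, and then convert the resulting coarse disk back into a cellular disk in $X$ by joining nearby vertices with uniformly bounded cellular paths. You also correctly note, as the paper does in the introduction via Brown's criterion, that the required finiteness property $F_{|S|-1}$ is itself a consequence of Theorem~\ref{t:p}, so no external input is needed.
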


\bigskip

\subsection{Finiteness properties} One cannot inquire about the word metric of a group if the group in question is not finitely generated.
Similarly, $m$-dimensional isoperimetric inequalities only make sense for groups that are of type $F_{m+1}$. Thus, in order for Theorem~\ref{t:lmr}, Conjecture~\ref{c:iso}, and Corollary~\ref{c:isop} to be well-posed, we need to know that ${\bf G} (\mathcal{O}_S)$ is of type $F_{ k({\bf G},S) -1}$, and this is known to be true. Indeed, Raghunathan proved that ${\bf G} (\mathcal{O}_S) $ is of type $F_{n}$ for all $n$ when $S$ consists of only archimedean places \cite{R}, Borel-Serre established that ${\bf G} (\mathcal{O}_S) $ is of type $F_{n}$ for all $n$ when $K$ is a number field \cite{B-S}, and Bux-Gramlich-Witzel recently established that  ${\bf G} (\mathcal{O}_S)$ is of type $F_{ k({\bf G},S) -1}$ in the case when $K$ is a function field  \cite{B-G-W}. 

But while the finiteness properties of ${\bf G} (\mathcal{O}_S) $ that are needed for Theorem~\ref{t:lmr} and Corollary~\ref{c:isop} to be well-posed are known --- that ${\bf G} (\mathcal{O}_S)$ is of type $F_{ |S|-1} $ --- our proof makes no use of these finiteness properties, not even of finite generation.
 Rather, the needed finiteness properties can be derived as a corollary that follows quickly from Theorem~\ref{t:p}.

We illustrate here a quick proof that Theorem~\ref{t:p} implies that  ${\bf G} (\mathcal{O}_S)$ is of type $F_{ |S|-1} $: Suppose ${\bf G} (\mathcal{O}_S)$ and $r_1>0$ are given. For $s>0$ we let  $R(s)$ be the simplicial complex formed by declaring $(k+1)$-tuples of points in ${\bf G} (\mathcal{O}_S)$ to be a simplex if each pair of points in the $(k+1)$-tuple are within distance $s$ of each other. Then $R(\infty)$ is contractible, and the natural action of ${\bf G} (\mathcal{O}_S)$ on $R(\infty)$ has finite cell stabilizers. Let  $m \leq |S| -2$. Any $m$-sphere in $R(r_1)$ corresponds naturally to a coarse $m$-sphere  in ${\bf G} (\mathcal{O}_S)$ of scale $r_1$, and Theorem~\ref{t:p} implies the existence of an $(m+1)$-disk in $R(r_2)$ that fills that sphere. Thus ${\bf G} (\mathcal{O}_S)$ is of type $F_{ |S| -1}$ by  Brown's criterion (see e.g. Theorem 7.4.1 \cite{Ge}). 

Notice that the proof in the previous paragraph does not use Theorem~\ref{t:p} in its fullest, as 
 the volumes of the filling disks used in the proof are irrelevant.
 
 Using a similar proof as above, Conjecture~\ref{c:p} would imply  that ${\bf G} (\mathcal{O}_S)$ is of type $F_{ k({\bf G},S) -1}$. Again, this result is known by work of 
Raghunathan, Borel-Serre,  and Bux-Gramlich-Witzel, and it is known by Bux-Wortman that the group  ${\bf G} (\mathcal{O}_S)$ is not of type $F_{ k({\bf G},S)} $ when $K$ is a function field  \cite{B-W1}. Thus, Conjecture~\ref{c:p} would imply the strongest possible finiteness result for ${\bf G} (\mathcal{O}_S)$ that is independent of whether the global field $K$ is a number field or a function field.

\subsection{Solvable groups} Our proof of Theorem~\ref{t:p} proceeds by first studying the large scale geometry of certain solvable groups. We prove the following generalization of Gromov's result that certain solvable Lie groups of the form $\RR ^{n-1} \ltimes \RR ^n$ satisfy a quadratic Dehn function if $n \geq 3$:

\begin{theorem}\label{p:s}
Let $\bf Q$ be a proper $K$-parabolic subgroup of $\bf G$. Let $\bf U_Q$ be the unipotent radical of $\bf Q$ and let $\bf A_Q$ be the maximal $K$-split torus in the center of a $K$-Levi subgroup of $\bf Q$.

 Given $r_1>0$, there exists $r_2>0$ and a polynomial $f$ such that any coarse $m$-sphere $ \Sigma \se ({\bf U_Q A_Q}) (\mathcal{O}_S)$ of scale $r_1$ can be realized as the boundary of a coarse $(m+1)$-ball in  $({\bf U_Q A_Q}) (\mathcal{O}_S)$ of scale $r_2$ whose volume is bounded above by $f (\emph{vol}(\Sigma))$ as long as $m \leq |S|-2$.
 
 In particular, $({\bf U_Q A_Q}) (\mathcal{O}_S)$ is of type $F_{|S|-1}$ and satisfies a polynomial $m$-dimensional  isoperimetric inequality if $m \leq |S|-2$.
 \end{theorem}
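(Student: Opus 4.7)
Write $H = ({\bf U_Q A_Q})(\mathcal{O}_S)$ and embed it diagonally as a discrete subgroup of $H_\infty = \prod_{v\in S} ({\bf U_Q A_Q})(K_v)$, a product of solvable groups modelled on $U \rtimes A$ in the Lie-group case and its non-archimedean analogue otherwise. Conjugation by $\bf A_Q$ decomposes $\bf U_Q$ into root spaces ${\bf U}_\alpha$ indexed by nontrivial characters $\alpha$ of $\bf A_Q$, and for every $a\in {\bf A_Q}(\mathcal{O}_S)$ the value $\alpha(a)$ lies in $\mathcal{O}_S^{\times}$, so $\sum_{v\in S}\log|\alpha(a)|_v = 0$. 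Combining this product formula with the Dirichlet unit theorem (valid uniformly in the number-field and function-field cases) shows that the image of ${\bf A_Q}(\mathcal{O}_S)$ under $a \mapsto (\log|\alpha(a)|_v)_{v,\alpha}$ is a lattice in a hyperplane, yielding at least $|S|-1$ independent directions in which ${\bf A_Q}(\mathcal{O}_S)$ can dilate ${\bf U_Q}$-coordinates among the places. This $|S|-1$ is precisely the numerical input that forces the sharp filling dimension in the statement.

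Given a coarse $m$-sphere $\Sigma$ of scale $r_1$ in $H$ with $m \leq |S|-2$, I would fill it by a sector-and-flow construction modelled on Bux's treatment of $\mathbf{SL_2}(\mathcal{O}_S)$ and Gromov's quadratic filling of the Lie groups $\mathbb{R}^{n-1}\ltimes \mathbb{R}^n$. The key step is to choose, for any subset $T\subseteq S$ of size $m+1$, a ray in ${\bf A_Q}(\mathcal{O}_S)$ whose image in $\mathbb{R}^{S}$ is strictly negative at each place in $T$ on some fixed root; because $m+1 \leq |S|-1$, the Dirichlet lattice of the previous paragraph contains such a vector. Iteratively conjugating $\Sigma$ by the corresponding elements contracts $\Sigma$ in the $\bf U_Q$-directions at the $T$-places by a bounded factor per step while keeping bounded control at the complementary places. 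After $O(\log \mathrm{vol}(\Sigma))$ steps the image of $\Sigma$ sits in a fixed bounded neighbourhood of the identity in $H$, where it can be coned off with bounded volume; the track of the conjugation homotopy, suitably triangulated, assembles into a coarse $(m+1)$-ball in $H$ of scale $r_2$ depending only on $r_1$ and bounding $\Sigma$.

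For the volume bound, each conjugation step multiplies the local vertex count by a factor bounded independently of the stage, so summing over the $O(\log \mathrm{vol}(\Sigma))$ stages gives a total number of vertices that is polynomial in $\mathrm{vol}(\Sigma)$. The ``in particular'' clause then follows by transferring coarse fillings to genuine cellular fillings in a Rips complex $R(r_2)$ associated to $H$, exactly as sketched for ${\bf G}(\mathcal{O}_S)$ in the discussion of finiteness properties: Brown's criterion upgrades the filling statement to type $F_{|S|-1}$, and the polynomial volume bound gives a polynomial $m$-dimensional isoperimetric inequality.

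The main obstacle is executing the sector-and-flow construction uniformly over archimedean and non-archimedean places simultaneously, and gluing the sectors associated to different choices of $T$ along their shared faces without introducing more than polynomial overhead. In particular, one must verify that the intermediate conjugates of $\Sigma$ admit $H$-representatives — not merely $H_\infty$-representatives — within the prescribed scale, which is where a precise reduction theory for $({\bf U_Q A_Q})(\mathcal{O}_S)$ independent of the global field $K$ is needed. This uniform reduction theory, together with the higher-rank root combinatorics of $\bf G$ that governs which rays in the Dirichlet lattice are available, is the technical heart of extending Bux's result from $\mathbf{SL_2}(\mathcal{O}_S)$ to the general parabolic setting.
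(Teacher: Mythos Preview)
Your proposal shares the correct numerical input with the paper: Dirichlet's unit theorem gives $|S|-1$ independent directions in $\mathbf{A_Q}(\mathcal{O}_S)$, and one uses these to find elements that contract the unipotent coordinates at a prescribed proper subset of places. But the execution as you describe it has a genuine gap. You claim that by choosing a ray contracting at the $T$-places (with $|T|=m+1\leq |S|-1$) and iterating the conjugation, ``the image of $\Sigma$ sits in a fixed bounded neighbourhood of the identity in $H$'' after $O(\log\mathrm{vol}(\Sigma))$ steps. This is false: the product formula forces any element of $\mathbf{A_Q}(\mathcal{O}_S)$ that contracts $\mathbf{U_Q}$ at the $T$-places to \emph{expand} it at some complementary place, so the unipotent coordinates of $\Sigma$ at the places in $S\setminus T$ grow without bound under your iteration. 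A generic coarse $m$-sphere in $H$ has nontrivial unipotent component at all $|S|$ places simultaneously, so no single ray can shrink it globally, and your hint about ``gluing sectors associated to different choices of $T$'' is not enough to repair this without a concrete mechanism.

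The paper's proof supplies exactly that missing mechanism, and it is not a cone-off. One first \emph{subdivides} $\Sigma$ (Lemma~\ref{l:sub}) into a polysimplicial sphere $\Sigma'$ in which every coarse $k$-cell lies in a $k$-slice --- a coset of $\big[\prod_{v\in S'}\mathbf{U}_{\Phi(I)^+}(K_v)\big]\mathbf{A}_I(\mathcal{O}_S)$ with $|S'|=k$ --- so that each cell has nontrivial unipotent component at only $k$ places. On a $k$-slice with $k<|S|$ your contraction idea \emph{does} work (Lemma~\ref{l:slicefill}): a ray contracting at those $k$ places pushes the cell into $\mathbf{A}_I(\mathcal{O}_S)$, which is quasi-Euclidean and admits efficient fillings. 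The global filling of $\Sigma'$ is then assembled inductively (Lemma~\ref{l:foo}) by projecting to a fixed $(|S|-1)$-slice, joining each vertex to its projection by a path in a $1$-slice, and at each stage filling the resulting $k$-spheres inside $(k+1)$-slices using the slice-filling lemma --- never by contracting the whole sphere at once. Finally, no reduction theory enters this argument; the technical heart is the slice subdivision and the inductive projection, not the machinery you flag in your last paragraph.
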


As a special case of the above proposition, if $\bf G$ is $K$-split and $\bf B$ is a $K$-defined Borel subgroup of  $\bf G$, then ${\bf U_B A_B}= \bf B$, so ${\bf B}(\mathcal{O}_S)$ is of type $F_{|S|-1}$. Thus, Theorem~\ref{p:s} generalizes ``half" of Bux's theorem \cite{Bu}:

\begin{theorem}
Suppose $K$ is a function field, that $\bf G$ is $K$-split, and that ${\bf B} \leq {\bf G}$ is a $K$-defined Borel subgroup. Then ${\bf B}(\mathcal{O}_S)$ is of type $F_{|S|-1}$ but not of type $F_{|S|}$.
\end{theorem}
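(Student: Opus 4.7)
The theorem has two assertions, and the plan splits accordingly.

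The \emph{type $F_{|S|-1}$} half follows at once from Theorem~\ref{p:s}. When ${\bf G}$ is $K$-split, a $K$-defined Borel ${\bf B}$ admits the Levi decomposition ${\bf B} = {\bf T} \ltimes {\bf U}$, with ${\bf T}$ a maximal $K$-split torus and ${\bf U}$ the unipotent radical. Because ${\bf T}$ is already $K$-split, the maximal $K$-split torus in the center of the Levi of ${\bf B}$ is $\mathbf{A_B} = {\bf T}$, while $\mathbf{U_B} = {\bf U}$. Hence $\mathbf{U_B A_B} = {\bf B}$, and Theorem~\ref{p:s} directly yields that ${\bf B}(\mathcal{O}_S)$ is of type $F_{|S|-1}$.

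The negative assertion --- that ${\bf B}(\mathcal{O}_S)$ is not of type $F_{|S|}$ --- requires producing a genuine $(|S|-1)$-dimensional homological obstruction, and here I would follow the strategy of Bux in \cite{Bu}. Let ${\bf B}(\mathcal{O}_S)$ act properly on the product of Bruhat-Tits buildings $X = \prod_{v \in S} X_v$. The Borel ${\bf B}$ fixes a chamber $c_v$ at infinity of each $X_v$; pick Busemann functions $\beta_v : X_v \to \RR$ normalized toward a regular point of $c_v$ and set $\beta = \sum_v \beta_v$. The action of ${\bf B}(\mathcal{O}_S)$ on the values of $\beta$ factors through a map ${\bf T}(\mathcal{O}_S) \to \RR^{|S|}$ whose image, by Dirichlet's $S$-unit theorem for function fields, is a lattice in the hyperplane $\{\sum_v \beta_v = 0\} \cong \RR^{|S|-1}$.

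I would then apply Brown's criterion in its negative form: it suffices to exhibit an essential $(|S|-1)$-sphere in a horoball neighborhood of $c$ that cannot be bounded by any disk of uniformly bounded scale inside ${\bf B}(\mathcal{O}_S)$. A natural candidate comes from the $S$-unit action: conjugating a fixed nontrivial element $u \in {\bf U}(\mathcal{O}_S)$ by the $\ZZ^{|S|-1}$-lattice inside ${\bf T}(\mathcal{O}_S)$ traces out an apartment-like flat in $X$ whose boundary sphere should carry a nontrivial class in $H_{|S|-1}$ of the horosphere.

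The hard part, and the technical heart of Bux's original argument, is verifying that this cycle is genuinely non-fillable. This calls for a careful Morse-theoretic, descending-link analysis of the $\beta$-filtration on $X$ in the spirit of Bestvina-Brady, showing that the horospheres have nontrivial reduced $(|S|-1)$-homology and that no bounded-scale filling in ${\bf B}(\mathcal{O}_S)$ can kill this class. Rather than reproving this, I would invoke Bux's theorem from \cite{Bu} directly for this half of the statement.
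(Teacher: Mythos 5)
Your proposal matches the paper's treatment exactly: the type $F_{|S|-1}$ half is deduced from Theorem~\ref{p:s} by observing that for a Borel subgroup of a $K$-split group one has ${\bf U_B A_B} = {\bf B}$, and the failure of type $F_{|S|}$ is attributed to Bux's piecewise-linear Morse theory argument in \cite{Bu} rather than reproved. The extra narrative you give about Busemann functions and descending links is a fair gloss on Bux's method, but since you ultimately defer to \cite{Bu} for that half, your proof is the same as the paper's.
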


Bux's theorem is proved using piecewise linear Morse theory. It is the most prominent result in the mostly unexplored field of finiteness properties of solvable arithmetic groups over function fields.

 Wortman proves a converse to Theorem~\ref{p:s} by showing that $({\bf U_Q A_Q}) (\mathcal{O}_S)$ is not of type $F_{|S|}$ if $K$ is a function field, and that $({\bf U_Q A_Q}) (\mathcal{O}_S)$ has an exponential $(|S|-1)$-dimensional Dehn function if $K$ is a number field \cite{W3}, thus generalizing the ``other half" of Bux's theorem.

\bigskip

 \subsection{Outline of proof}
The plan for our proof was motivated by the unpublished Abels-Margulis proof of the Lubotzky-Mozes-Raghunathan theorem. 

Section~\ref{s:p} of this paper contains some preliminary material and notation, and Section~\ref{s:ex} displays an example that readers can use to guide themselves through the proofs in this paper.

In Section~\ref{s:red} we state the precise version of reduction theory (Proposition~\ref{p:prune}) that we will use in our proof of Theorem~\ref{t:p}. We give a proof of  Proposition~\ref{p:prune} in an appendix: Section~\ref{s:a}. Aside from starting with the well-known result that there are finitely many equivalence classes of minimal $K$-parabolic subgroups of $\bf G$ modulo ${\bf G}(\mathcal{O}_S)$, our proof is independent of the characteristic of $K$.

The proof of our main result, Theorem~\ref{t:p}, follows quickly from reduction theory (Proposition~\ref{p:prune}) and Proposition~\ref{p:bound} which states that ``boundaries of parabolic regions" have nice filling properties. In a first reading, the reader may wish to read the statement of Proposition~\ref{p:bound} from Section~\ref{s:bound}, along with Section~\ref{s:red}, before proceeding to Section~\ref{s:proof} for a short proof of our main result.

Section~\ref{s:sol} contains a proof of our theorem on fillings in solvable arithmetic groups, Theorem~\ref{p:s}, which is equivalently stated as Proposition~\ref{p:solvable}. It's used in Section~\ref{s:bound} to prove our result on filling in boundaries of parabolic regions, Proposition~\ref{p:bound}.

Section~\ref{s:finisht} contains a short proof that our main result implies the isoperimetric inequalities stated in Corollary~\ref{c:isop}.

 \subsection{Acknowledgements} The original blueprint for this proof was developed at an AIM workshop in September 2008. We would like to thank AIM as well as the co-organizers of that workshop, Nathan Broaddus and Tim Riley, and the rest of the participants. We would also like to thank Shahar Mozes for his helpful conversations during that workshop.
 
 We also thank Kai-Uwe Bux, Brendan Kelly, Amir Mohammadi, Dave Morris,  and Robert Young for helpful conversations.

\section{Preliminaries.}\label{s:p}

Let $K$, $\mathcal{O}_S$, and $\bf G$  be as above. Because $\bf G$ is $K$-isotropic, it has  a minimal $K$-parabolic subgroup $\bf{P}$.  Let  $\bf A$ be a maximal $K$-torus in $\bf P$.

We denote the root system for $({\bf G},{\bf A})$ by $\Phi$. A positive set $\Phi ^+$ is defined by $\bf P$. We let $\Delta \se \Phi^+$ be the set of simple roots.

For $I \subseteq \Delta$, we let $[I] \se \Phi$ be the linear combinations generated by $I$, and we let $\Phi (I)^+=\Phi ^+ - [I]$ and $[I]^+=[I] \cap \Phi ^+$.

If $\alpha \in \Phi $, we let ${\bf U}_{(\alpha)} $ be the root group corresponding to $\alpha$. For any set $\Psi \se \Phi ^+$ that is closed under addition, we let ${\bf U}_\Psi$ be the group $\prod _{\alpha \in \Psi} {\bf U}_{(\alpha)}$. The group $\prod _{v \in S}{\bf U}_{\Psi}(K_v)$ is naturally identified with a product of topological vector spaces that we endow with a norm $|| \cdot ||$.

If $I \subseteq \Delta$, then we let ${\bf A}_I$ be the connected component of the identity in $(\cap _{\alpha \in I} \text{Ker}(\alpha))$.
We let ${\bf Z_G}({\bf A}_I)$ be the centralizer of ${\bf A}_I$ in $\bf G$ so that ${\bf Z_G}({\bf A}_I)={\bf M}_I{\bf A}_I$ where ${\bf M}_I$ is a reductive $K$-group with $K$-anisotropic center. We denote by ${\bf P}_I$ the parabolic group ${\bf U}_{\Phi(I)^+}{\bf M}_I{\bf A}_I$. The Levi subgroup ${\bf M}_I{\bf A}_I$ normalizes the unipotent radical ${\bf U}_{\Phi(I)^+}$, and elements of ${\bf A}_I$ commute with those of ${\bf M}_I$

Note that if $\alpha \in \Delta$, then ${\bf P}_{\Delta -\alpha}$ is a maximal proper $K$-parabolic subgroup of $\bf{G}$, and that ${\bf P}_\emptyset = {\bf P}$. To ease notation a bit, we will also denote $ {\bf U}_{\Phi(\emptyset)^+}={\bf U}_{\Phi^+}$, ${\bf M}_\emptyset $, and ${\bf A}_\emptyset $ at times as  ${\bf U} $, ${\bf M} $, and ${\bf A}$ respectively.

In the remainder of this paper we denote the product over $S$ of local points of a $K$-group by ``unbolding", so that, for example, $$G=\prod _{v\in S}{\bf G}(K_v)$$

\subsection{The metric on $G$}
Suppose $S=\{v\}$. Let $\bf Q$ be a minimal $K_v$-parabolic subgroup of $\bf G$ with maximal $K_v$-torus ${\bf A}_v$ and unipotent radical ${\bf U}_v$. Then there is a compact set $B \se G$ such that $U_vA_vB=G$ and thus the left invariant metric on $U_vA_v$ is quasi-isometric to $G$. 

It follows that  $A_v$ with the restricted metric from $G$ is quasi-isometric to Euclidean space. Also, if $u \in U_v$ then there is some $L>0$ such that $(1/L)\log (||u||+1) \leq d(1,u) \leq L \log (||u||+1)$. The properties of the metric on $G$ that we will use in this paper are deduced from this paragraph after taking the product metric in the case when $|S|>1$.

\subsection{Bruhat decompostion}
We let $W \se {\bf G}(K)$ be a set of coset representatives, including 1, for the Weyl group ${\bf N_G(A)} / {\bf Z_G(A)}$ where ${\bf N_G(A)} $ is the normalizer of $\bf A$ in $\bf G$. Then ${\bf G}(K)$ is a disjoint union $\coprod _{w\in W} {\bf P}(K)w{\bf P}(K)$.

 \subsection{Conjugation} If $g,h \in G$ and $H \se G$, then we denote $ghg^{-1}$ as $^g h$ and $gHg^{-1}$ as $^g H$. 
 
\subsection{Bounds} Throughout, we write $a=O(c)$ to mean that there is some constant $\kappa$ depending only on $G$ and  ${\bf G}(\mathcal{O}_S)$  such that $a \leq \kappa c$.
 
 \section{An example to follow throughout}\label{s:ex}
 
 In this section we provide an example of an arithmetic group ${\bf G}(\mathcal{O}_S)$ that those less familiar with arithmetic groups may prefer to focus on while reading the rest of this paper. The example we provide is the arithmetic group ${\bf SL_3}(\mathbb{Z}[1/p_1,\ldots , 1/p_k])$ where $p_1,\ldots , p_k$ are prime numbers. It is an example that is simple enough that most of this paper can be read with it in mind and without any knowledge of the general properties of semisimple groups, but it is complicated enough that it still illustrates all of the important features and techniques of our general proof.
 
 Although we do provide explicit examples in this section of $K$, $\bf G$,  ${\bf G}(\mathcal{O}_S)$, etc., these examples are particular only to this section, and nowhere in the remainder of the paper is any part of our proof restricted to this particular example.
 
 \subsection{Global field, valuations, and $S$-integers}
 For our example we take $K$ to be the field of rational numbers $\mathbb{Q}$. 
 
 Let $v_\infty$ be the archimedean valuation on $\mathbb{Q}$. That is, ${v_\infty}$ endows $\mathbb{Q}$ with the restriction of the standard norm on the real numbers, and the completion $K_{v_\infty}$ is the field of real numbers $\mathbb{R}$. 
 
 Given a prime number $p$, there is also a $p$-adic valuation on $\mathbb{Q}$, denoted $v_p$,  whose completion yields the locally compact field of $p$-adic numbers $\mathbb{Q}_p$. What's important to know about the $p$-adic norm is that the norm of $p$ is less than 1, and hence the $p$-adic norm of $1/p$ is greater than 1.
 
 The archimedean and $p$-adic valuations are the only valuations that exist for $\mathbb{Q}$ up to scale.

 We can take for a set of valuations $S$ to be $\{v_\infty , v_{p_1},\ldots, v_{p_k}\}$ where each $p_i$ is a distinct prime. Thus, $|S|=k+1$.
 
 The ring of $S$-integers $\mathcal{O}_S$ in this example is the ring $\mathbb{Z}[1/p_1,\ldots , 1/p_k]$. Notice that this ring has as units the elements of $\mathbb{Q}$ whose numerators and denominators are products of powers of the primes $p_1, \ldots , p_k$ and their negatives.
 
It's a good exercise to check that  $\mathbb{Z}[1/p_1,\ldots , 1/p_k]$ embedded diagonally into the product $\mathbb{R} \times \mathbb{Q}_{p_1} \times \cdots \times \mathbb{Q}_{p_k}$ is a discrete and cocompact subring.
 
\subsection{Simple group and resulting arithmetic group and semisimple Lie group} We take for our example of $\bf G$ the group ${\bf SL_3}(\mathbb{C})$ of $3\times 3$ matrices with entries in $\mathbb{C}$ whose determinants equal 1.

In what follows, if $R$ is a subring of $\mathbb{C}$, then ${\bf G}(R)$ is understood to be the group ${\bf SL_3}(R)$. In particular, the algebraic closures of $\mathbb{R}$ and $\mathbb{Q}_p$ are isomorphic to $\mathbb{C}$, so we can consider $\mathbb{R}$ and $\mathbb{Q}_p$ to be subrings of $\mathbb{C}$ and then the Lie group $G$ is the product $${\bf SL_3}(\mathbb{R})\times {\bf SL_3}(\mathbb{Q}_{p_1})\times \cdots \times {\bf SL_3}(\mathbb{Q}_{p_k})$$

The arithmetic group ${\bf G}(\mathcal{O}_S)$ in this example is  ${\bf SL_3}(\mathbb{Z}[1/p_1,\ldots , 1/p_k])$. Embedded diagonally into $G$, it is a discrete subgroup.

More generally, we regard elements of ${\bf G}(\mathbb{Q})={\bf SL_3}(\mathbb{Q})$ as elements of $G$ via the diagonal embedding. 
 
The number $\text{rank}_\mathbb{R} {\bf SL_3}(\mathbb{C})$ is the maximal dimension of a subgroup of ${\bf SL_3}(\mathbb{C})$ that is diagonal after being conjugated by an element of ${\bf SL_3}(\mathbb{R})$. Thus, $\text{rank}_\mathbb{R} {\bf SL_3}(\mathbb{C})=2$. Similarly, $\text{rank}_\mathbb{Q_p} {\bf SL_3}(\mathbb{C})=2$ for any prime $p$, so for our choice of ${\bf G}(\mathcal{O}_S)$ we have that $k({\bf G}, S) = \sum_{v \in S}\text{rank}_{K_v} {\bf SL_3}(\mathbb{C})= \sum_{v \in S}2=2|S|$.
 
\subsection{Roots} We let $\bf P$ be the upper-triangular subgroup of ${\bf SL_3}(\mathbb{C})$ whose entries below the diagonal all equal 0, and we let $\bf A$ be the 2-dimensional group of all diagonal matrices in ${\bf SL_3}(\mathbb{C})$. The choice of $\bf P$ and $\bf A$ provides us with  the set of six roots  $$\Phi=\{\, \lambda _{ij} \mid 1 \leq i,j \leq 3 {\text{ and }}  i \neq j \,\}$$ where each  $\lambda _{ij} : \mathbf{A} \rightarrow \mathbb{C}^\times $ is defined by $\lambda _{ij}(a_1,a_2,a_3)=a_i / a_j$ if $(a_1, a_2, a_3 )\in { \bf A}$ is the matrix whose 3 diagonal entries are given by $a_1$, $a_2$, and $a_3$ respectively.
 
The operation of pointwise multiplication of roots is written additively, so that for example, $\lambda_{12}+\lambda_{23}=\lambda _{13}$ and $- \lambda_{ij}= \lambda_{ji}$. With this structure $$\Phi=\{\, \lambda _{12},\, \lambda _{23},\, \lambda _{12}+ \lambda _{23},\,
 -\lambda _{12},\, -\lambda _{23},\, -\lambda _{12}- \lambda _{23}\, \}$$ The positive set of roots (consistent with our choice of $\bf P$) is $$\Phi^+=\{ \,\lambda _{12},\, \lambda _{23},\, \lambda _{12}+ \lambda _{23}\, \}$$ and the set of simple roots is $$\Delta=\{ \, \lambda _{12}, \, \lambda _{23}\,\}$$ The ``highest root" is $\lambda_{13}=\lambda _{12}+ \lambda _{23}$.

\subsection{Subgroups of $\bf G$ defined by roots}
The group ${\bf A}_{\{\lambda_{12}\}} \leq {\bf A}$ is given by the group of diagonal matrices $(a_1,a_2,a_3)$ where $a_1=a_2$.   The group ${\bf A}_{\{\lambda_{23}\}} \leq {\bf A}$ is given by the group of diagonal matrices $(a_1,a_2,a_3)$ where $a_2=a_3$.     The group ${\bf A}_{\emptyset}$ equals $ {\bf A}$.

Notice that the multiplicative group ${\bf A}_{\{\lambda_{12}\}}(\mathcal{O}_S)$ is virtually isomorphic to $\mathbb{Z}^k$, as is ${\bf A}_{\{\lambda_{23}\}}(\mathcal{O}_S)$. The group ${\bf A}(\mathcal{O}_S)$ is virtually isomorphic to $\mathbb{Z}^{2k}$.

The group ${\bf M}_{\{\lambda_{12}\}}\cong {\bf SL_2}(\mathbb{C})$ is the set of matrices in ${\bf SL_3}(\mathbb{C})$ that can be written in the form 
\[ \begin{pmatrix}
* & * & 0  \\
* & * & 0   \\
0 & 0 & 1 
\end{pmatrix} 
\]  
 The group ${\bf M}_{\{\lambda_{23}\}}\cong {\bf SL_2}(\mathbb{C})$ consists of matrices in ${\bf SL_3}(\mathbb{C})$ that can be written in the form 
 \[\begin{pmatrix}
1 & 0 & 0  \\
0 & * & *   \\
0 & * & * 
\end{pmatrix} 
\]   The group ${\bf M}={\bf M}_{\emptyset}$ is trivial. For $I \subsetneq \Delta$, elements of ${\bf M}_I$ commute with elements of ${\bf A}_I$.

For any $\lambda_{ij} \in \Phi$, the root group ${\bf U}_{(\lambda _{ij})}$ is the subgroup of  ${\bf SL_3}(\mathbb{C})$ that equals the identity matrix in every entry except for perhaps the entry in the $i$-th row and $j$-th column. Notice that if $u \in {\bf U}_{(\lambda _{ij})}$ and $a \in {\bf A}$ then $aua^{-1}=\lambda_{ij}(a) u$.

Notice that ${\bf U}_{(\lambda _{ij})}(\mathbb{R})$ is isomorphic to the additive group $\mathbb{R}$, and thus ${\bf U}_{(\lambda _{ij})}(\mathbb{R})$ has a natural structure of a normed 1-dimensional vector space with an obvious choice of norm. Likewise ${\bf U}_{(\lambda _{ij})}(\mathbb{Q}_p)$ is isomorphic to the normed vector space $\mathbb{Q}_p$. The group ${ U}_{(\lambda _{ij})}$ is  isomorphic to the product $\mathbb{R} \times \mathbb{Q}_{p_1} \times \cdots \times \mathbb{Q}_{p_k}$.

Following the notation from the previous section, we have  that $\Phi(\{\lambda_{12}\})^+=\{\lambda_{13}, \lambda_{23}\}$, $\Phi(\{\lambda_{23}\})^+=\{\lambda_{12}, \lambda_{13}\}$, and $\Phi(\emptyset )^+=\{\lambda_{12}, \lambda_{13}, \lambda_{23}\}$. From this, and the fact that for $I \subsetneq \Delta $ the group $U_{\Phi(I)^+}$ is simply the product of those ${ U}_{(\lambda _{ij})}$ with $\lambda _{ij} \in \Phi(I)^+$, one can easily see that each $U_{\Phi(I)^+}$ has the topological structure of a product of normed vector spaces (each of dimension 2 or 3 depending on the cardinality of $I$) and we endow each $U_{\Phi(I)^+}$ with a norm that we denote simply as $|| \cdot ||$, ignoring the set $I$ in our notation for the norm. The group structure on $U_{\Phi(I)^+}$  is also of a product of vector spaces if $I=\{\lambda_{12}\}$ or if $I=\{\lambda_{23}\}$. If  $I=\emptyset$ then the group structure on ${\bf U}={\bf U}_{\Phi(\emptyset )^+}$ is nilpotent, but not abelian.

It is easy to form explicitly the parabolic groups ${\bf P}_I={\bf U}_{\Phi(I)^+}{\bf M}_I{\bf A}_I$ for $I \subsetneq \Delta$. The group ${\bf P}_{\{\lambda _{12}\}}$ is the set of matrices in 
${\bf SL_3}(\mathbb{C})$ of the form 
\[ \begin{pmatrix}
* & * & *  \\
* & * & *   \\
0 & 0 & * 
\end{pmatrix}= \begin{pmatrix}
1 & 0 & *  \\
0 & 1 & *   \\
0 & 0 & 1 
\end{pmatrix} 
\begin{pmatrix}
* & * & 0  \\
* & * & 0   \\
0 & 0 &  1
\end{pmatrix} 
\begin{pmatrix}
a_1 & 0 & 0  \\
0 & a_1 & 0   \\
0 & 0 & a_1^{-2} 
\end{pmatrix} 
\]  
Elements of  ${\bf P}_{\{\lambda _{23}\}}$ have the form 
\[ \begin{pmatrix}
* & * & *  \\
0 & * & *   \\
0 & * & * 
\end{pmatrix} 
\]  
and elements of ${\bf P}={\bf P}_{\{ \emptyset \}}$ have the form 
\[ \begin{pmatrix}
* & * & *  \\
0 & * & *   \\
0 & 0 & * 
\end{pmatrix} 
\]  

It's easy to check that ${\bf U}_{\Phi(I)^+}$ is a normal subgroup of ${\bf P}_I$.

Notice, that the inverse-transpose automorphism of ${\bf SL_3}(\mathbb{C})$ restricts to an isomorphism between ${\bf P}_{\{\lambda _{12}\}}$ and ${\bf P}_{\{\lambda _{23}\}}$. Much of the proof in this paper is considered by examining parabolic groups (or spaces associated with them) in the different cases enumerated by proper subsets $I \subsetneq \Delta$. Thus, when considering our proof as it applies to the particular example from this section, one can often restrict to just two cases: $I=\{\lambda_{12}\}$ and $I=\emptyset$.

\subsection{Parabolic regions}
In the next section, ``parabolic regions" will be defined. They will be denoted as $R_I$ for $I \subsetneq \Delta$. 


Very nearly, $R_I$ is the space  $$ {\bf P}_I(\mathbb{Z}[1/p_1,\ldots , 1/p_k])A_I^+$$ or equivalently $${\bf U}_{\Phi(I)^+}(\mathbb{Z}[1/p_1,\ldots , 1/p_k]){\bf M}_I(\mathbb{Z}[1/p_1,\ldots , 1/p_k]){A}_I^+$$
where ${ A}_I^+$ is defined in the next section as those $a \in A$ such that $a$ commutes with elements of ${\bf M}_I(\mathbb{Z}[1/p_1,\ldots , 1/p_k])$ and such that, up to multiplying $a$ by an element of ${\bf A}(\mathbb{Z}[1/p_1,\ldots , 1/p_k])$, $||a^{-1}ua|| \leq  ||u||$ for any  $u \in {\bf U}_{\Phi(I)^+}(\mathbb{Z}[1/p_1,\ldots , 1/p_k])\leq { U}_{\Phi(I)^+}$. 

Since ${\bf U}_{\Phi(I)^+}(\mathbb{Z}[1/p_1,\ldots , 1/p_k])$ is a cocompact lattice in $ { U}_{\Phi(I)^+}$, and because we are only interested in the large scale geometry of $R_I$, the actual defintion of a parabolic region that we will use is $$ R_I={ U}_{\Phi(I)^+}{\bf M}_I(\mathbb{Z}[1/p_1,\ldots , 1/p_k]){A}_I^+$$ We make this substitution only to ease notation a bit.

\subsection{Weyl group and cusps}
We let $W \se {\bf SL_3}(\mathbb{C})$ be the set of 6 matrices that permute the standard coordinate vectors $e_1,e_2,e_3 \in \mathbb{C}^3$. The elements of $W$ normalize $\bf A$, and they are representatives for the Weyl group which is defined as the normalizer of $\bf A$ modulo $\bf A$.

The ``longest element" of the Weyl group is represented by the transposition that interchanges $e_1$ and $e_3$ and fixes $e_2$.
 
The set $F$ from Theorem~\ref{t:finitecusps} below 
as it applies to the example illustrated in this section consists of only the identity element. That is,
the double coset space 
$${\bf SL_3}(\mathbb{Z}[1/p_1,\ldots , 1/p_k]) \backslash {\bf SL_3}(\mathbb{Q}) / {\bf P}(\mathbb{Q})$$
is a single point. Indeed, it is well known that $${\bf SL_3}(\mathbb{Z}) \backslash {\bf SL_3}(\mathbb{Q}) / {\bf P}(\mathbb{Q})$$
is a single point, as this is equivalent to the assertion  that ${\bf SL_3}(\mathbb{Z})$ acts transitively on complete flags in $\mathbb{Q}^3$.

 \subsection{End of example} We have now concluded our example, and in the remainder of the paper we will return to our more general notation where $K$ is an arbitrary global field, $\mathbf{G}$ is an arbitrary noncommutative, absolutely almost simple, $K$-isotropic $K$-group, $\bf P$ is an arbitrary minimal $K$-parabolic subgroup of $\bf G$, and so on.

 \section{Parabolic regions and the pruning of $G$ to ${\mathbf{G}}(\mathcal{O}_S)$}\label{s:red}

This section contains the precise statement from reduction theory that our proof requires. We begin by recalling the ``finiteness of cusps" theorem from reduction theory.

\begin{theorem}\label{t:finitecusps}
There is a finite set $F \se {\bf G}(K)$ of coset representatives for  ${\bf G}(\mathcal{O_S}) \backslash {\bf G}(K) / {\bf P}(K)$.
\end{theorem}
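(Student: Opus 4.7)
The plan is to reinterpret the double coset space as a set of conjugacy classes of parabolic subgroups and then appeal to the finiteness of the class number of $\mathcal{O}_S$. Because $\mathbf{P}$ is self-normalizing in $\mathbf{G}$ and because, by Borel--Tits, any two minimal $K$-parabolic subgroups of $\mathbf{G}$ are $\mathbf{G}(K)$-conjugate, the map $g\mathbf{P}(K) \mapsto g\mathbf{P}g^{-1}$ identifies $\mathbf{G}(K)/\mathbf{P}(K)$ with the set $\mathcal{P}$ of all minimal $K$-parabolic subgroups of $\mathbf{G}$. The statement to prove therefore becomes: $\mathbf{G}(\mathcal{O}_S)$ acts on $\mathcal{P}$ with finitely many orbits.

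First I would fix a faithful $K$-rational representation $\rho: \mathbf{G} \hookrightarrow \mathbf{GL}(V)$ and, via Chevalley's theorem on the existence of a character whose stabilizer cuts out a prescribed subgroup, a line $L \subseteq V$ whose stabilizer in $\mathbf{G}$ is exactly $\mathbf{P}$. This identifies $\mathbf{G}(K)/\mathbf{P}(K)$ with the $\mathbf{G}(K)$-orbit of $[L]$ inside $\mathbb{P}(V)(K)$, and reduces the problem to counting $\mathbf{G}(\mathcal{O}_S)$-orbits of lines of the form $g\cdot L$.

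Next I would fix an $\mathcal{O}_S$-lattice $\Lambda \subseteq V$ stable under $\mathbf{G}(\mathcal{O}_S)$. For any $g \in \mathbf{G}(K)$, the intersection $gL \cap \Lambda$ is a rank-one projective $\mathcal{O}_S$-module, and its isomorphism class is an invariant of the $\mathbf{GL}(\Lambda)$-orbit of $gL$. Since $\mathrm{Pic}(\mathcal{O}_S)$ is finite (Dirichlet in the number-field case; Weil/Harder, using that $S$ is nonempty, in the function-field case), only finitely many isomorphism classes occur. Dually, one would need to keep track of the quotient $\Lambda/(gL\cap \Lambda)$ (or equivalently of the whole flag obtained by pulling back a standard flag through $\rho$), but this again only ranges over finitely many isomorphism types by the same finiteness of Steinitz classes. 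This shows $\mathbf{GL}(\Lambda)$ has finitely many orbits on the relevant lines, and hence so does $\mathbf{GL}(V)(\mathcal{O}_S)$.

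The final and main obstacle is to pass from $\mathbf{GL}(V)(\mathcal{O}_S)$-orbits to $\mathbf{G}(\mathcal{O}_S)$-orbits: the image $\rho(\mathbf{G}(\mathcal{O}_S))$ need not be all of $\rho(\mathbf{G}(K))\cap \mathbf{GL}(V)(\mathcal{O}_S)$. One resolves this by noting the two are commensurable (both are arithmetic subgroups of $\rho(\mathbf{G})$ with respect to compatible $\mathcal{O}_S$-structures), so each $\mathbf{GL}(V)(\mathcal{O}_S)$-orbit splits into finitely many $\mathbf{G}(\mathcal{O}_S)$-orbits. Combined with the previous step this gives the desired finiteness. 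In practice the cleanest execution of this program goes through reduction theory and Siegel sets directly --- which is presumably the route taken in the appendix of the paper --- but the skeleton above (self-normalization of $\mathbf{P}$, Chevalley embedding, finiteness of $\mathrm{Pic}(\mathcal{O}_S)$, commensurability) is what drives the proof.
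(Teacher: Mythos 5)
Your first two steps are sound: identifying $\mathbf{G}(K)/\mathbf{P}(K)$ with the set of minimal $K$-parabolic subgroups (self-normalization plus Borel--Tits), and realizing this as a $\mathbf{G}(K)$-orbit of lines via Chevalley's theorem, are both standard and correct. The Steinitz-class classification of $\mathbf{GL}(\Lambda)$-orbits on rank-one direct summands of $\Lambda$ is also essentially right, given that $\mathrm{Pic}(\mathcal{O}_S)$ is finite. But the argument breaks down exactly at the step you yourself flag as ``the final and main obstacle.''

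The commensurability you invoke is between $\rho(\mathbf{G}(\mathcal{O}_S))$ and $\rho(\mathbf{G}(K)) \cap \mathbf{GL}(V)(\mathcal{O}_S)$, both of which are arithmetic in $\rho(\mathbf{G})$. That fact is true. But what you actually need is to pass from finitely many $\mathbf{GL}(V)(\mathcal{O}_S)$-orbits to finitely many $\bigl(\mathbf{G}(K) \cap \mathbf{GL}(V)(\mathcal{O}_S)\bigr)$-orbits, and here $\mathbf{G}(K) \cap \mathbf{GL}(V)(\mathcal{O}_S)$ has \emph{infinite} index in $\mathbf{GL}(V)(\mathcal{O}_S)$ as soon as $\mathbf{G}$ is a proper subgroup of $\mathbf{GL}(V)$. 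A single $\mathbf{GL}(\Lambda)$-orbit can a priori decompose into infinitely many orbits under an infinite-index subgroup, and nothing in the Steinitz-class bookkeeping rules this out: the Steinitz class of $gL \cap \Lambda$ is a complete $\mathbf{GL}(\Lambda)$-invariant but a very coarse $\mathbf{G}(\mathcal{O}_S)$-invariant. Put differently, finiteness of $\mathrm{Pic}(\mathcal{O}_S)$ gives class-number finiteness for $\mathbf{GL}_n$ (and $\mathbf{SL}_n$), but the statement you need is class-number finiteness for an arbitrary reductive $K$-group --- the theorem of Borel over number fields and Behr--Harder over function fields --- and that does not reduce to $\mathrm{Pic}$. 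The function-field case in particular is genuinely hard and was a major achievement of Harder's reduction theory.

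For what it's worth, the paper does not attempt a proof from scratch: it cites Borel (Proposition 15.6 of the arithmetic groups book, via restriction of scalars, with the $S$-integer case following from the integer case) in characteristic zero, and Behr's Satz 8, with Harder's Korollar 2.2.7 used to remove a technical hypothesis, in positive characteristic. If you want to prove the statement yourself rather than cite it, the correct route is through Siegel sets and a genuine reduction-theoretic compactness argument, not through $\mathrm{Pic}(\mathcal{O}_S)$.
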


\begin{proof}
Restriction of scalars applied to Proposition 15.6 of Borel's book on arithmetic groups \cite{Boreltext} gives the result when $\mathcal{O}_S$ is the ring of integers in a number field. The general case for number fields is immediate since any ring of $S$-integers contains the ring of integers.

When $K$ is a function field, this theorem is the statement of Satz 8 in Behr's \cite{Be}. Behr's proof needs a technical hypothesis (used for Satz 5). However, Harder has removed the need for that hypothesis: Korollar 2.2.7 in \cite{H} can be used as a replacement for  Satz 5 \cite{Be} in the proof.

\end{proof}

\subsection{Parabolic regions} If $\bf Q$ is a proper $K$-parabolic subgroup of $\bf G$, then $\bf Q$ is conjugate over ${\bf G}(K)$ to ${\bf P}_I$ for some proper subset  $I \subsetneq \Delta$. We let $$\Lambda _{\bf Q} = \{\, \gamma f \in  {\bf G}(\mathcal{O}_S)F \mid \, ^{\gamma f}{\bf P}_I={\bf Q}\text{ for some }I \subsetneq \Delta\,\}$$ where $F$ is as in Theorem~\ref{t:finitecusps}. Notice that Theorem~\ref{t:finitecusps} insures that  $\Lambda _{\bf Q} $ is nonempty.

Given any $a=(a_v)_{v \in S} \in A$, and any $\alpha \in \Phi$, we let 
$$|\alpha (a)|=\prod_{v \in S} |\alpha (a_v)|_v$$ where $|\cdot |_v$ is the $v$-adic norm on $K_v$.

Given any $t>0$ and any $I \subsetneq \Delta$ we let $$A_I^+(t)=\{\, a \in A_I \mid |\alpha (a)| \geq t \text{ if } \alpha \in \Delta -I  \,\}$$ 
and we let $A_I^+=A_I^+(1)$.

For $t >0$, we let $$R_{\bf Q}(t) =\Lambda _{\bf Q} U_{\Phi(I)^+}{\bf M}_I(\mathcal{O}_S)A_I^+(t)$$ We call any such subset of $G$ a \emph{parabolic region}. We set $R_{\bf Q}=R_{\bf Q}(1)$.

\subsection{Boundaries of parabolic regions} We let $\partial A_I^+(t)$ be the set of all $ a \in A_I(t)$ such that there exists $ \alpha \in \Delta -I$ with  $|\alpha (a)| \leq  |\alpha (b)|$ for all  $b \in A_I(t)$.
Then we define the boundary of a parabolic region as
$$\partial R_{\bf Q}(t) =\Lambda _{\bf Q} U_{\Phi(I)^+}{\bf M}_I(\mathcal{O}_S) \partial A_I^+(t)$$

\subsection{Pruning $G$ to ${\mathbf{G}}(\mathcal{O}_S)$}  Given $0 \leq n < |\Delta|$, we let $\mathcal{P}(n)$ be the set of $K$-parabolic subgroups of $\bf G$ that are conjugate over ${\bf G}(K)$ to some ${\bf P}_I$ with $|I|=n$.

We will directly apply the following result from reduction theory to our proof of Theorem~\ref{t:p}.

\begin{proposition}\label{p:prune}
There exists a bounded set $B _0 \se G$, and given a bounded set $B_n\se G$ and any $N_n \geq 0$ for $0 \leq n <|\Delta|$, there exists $t_n >1$ and a bounded set $B_{n+1} \se G$  such that 
\begin{quote}
$(i)$ $G=\bigcup_{{\bf Q} \in \mathcal{P}(0)}R_{\bf Q}B_0$;
\medskip

\noindent $(ii)$ if ${\bf Q}, {\bf Q '} \in \mathcal{P}(n) $ and ${\bf Q} \neq {\bf Q'}$, then  the distance between $R_{\bf Q}(t_n)B_n$  and $R_{\bf Q'}(t_n)B_n$ is at least $N_n$;
\medskip

\noindent $(iii)$ ${\bf G}(\mathcal{O}_S) \cap R_{\bf Q}(t_n)B_n = \emptyset $ for all $n$;
\medskip

\noindent $(iv)$ if $n \leq |\Delta |-2$ then $\big(\bigcup_{{\bf Q} \in \mathcal{P}(n)} R_{\bf Q}B_n \big) - \big( \bigcup_{{\bf Q} \in \mathcal{P}(n)} R_{\bf Q}(2t_n)B_n \big)$ is contained in $  \bigcup_{{\bf Q} \in \mathcal{P}(n+1)}R_{\bf Q}B_{n+1}$;
\medskip

\noindent $(v)$ $\big(\bigcup_{{\bf Q} \in \mathcal{P}(|\Delta |-1)} R_{\bf Q}B_{|\Delta |-1} \big) - \big( \bigcup_{{\bf Q} \in \mathcal{P}(|\Delta |-1)} R_{\bf Q}(2t_{|\Delta |-1})B_{|\Delta |-1} \big)$ is contained in $  {\bf G}(\mathcal{O}_S)B_{ |\Delta |}$; and

\medskip

\noindent $(vi)$ if ${\bf Q} \in \mathcal{P}(n) $, then there is an $(L,C)$ quasi-isometry $R_{\bf Q}(t_n)B_n \rightarrow  U_{\Phi(I)^+}{\bf M}_I(\mathcal{O}_S)A_I^+$ for some $I \subsetneq \Delta$ with $|I|=n$. The quasi-isometry restricts to an $(L,C)$ quasi-isometry  $\partial R_{\bf Q}(t_n)B_n \rightarrow U_{\Phi(I)^+}{\bf M}_I(\mathcal{O}_S)\partial A_I^+$ where $L\geq 1$ and $C\geq 0$ are independent of $\bf Q$.
\medskip

\end{quote}
\end{proposition}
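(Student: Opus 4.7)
The plan is to derive everything from classical reduction theory (Siegel sets) together with the finiteness of cusps, Theorem~\ref{t:finitecusps}, while paying careful attention to the root-system geometry of the torus $A$. First I would fix, for each $f \in F$ and each $I \subsetneq \Delta$, a concrete Siegel-type set modeled on $U_{\Phi(I)^+} {\bf M}_I(\mathcal{O}_S) A_I^+(t) \cdot \Omega$ for some fixed compact $\Omega \subseteq G$, and verify that as $t$ varies over positive reals and $\mathbf{Q}$ varies over $\mathcal{P}(0)$, the standard reduction theory (Borel for number fields, Harder--Behr for function fields) says that finitely many translates by ${\bf G}(\mathcal{O}_S)$ of such sets (with $I=\emptyset$) cover $G$. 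Choosing $B_0$ to contain the relevant compact piece and absorbing the ${\bf G}(\mathcal{O}_S)$-translation into the definition $\Lambda_{\bf Q} = {\bf G}(\mathcal{O}_S)F \cap \{\, g : {}^g{\bf P}_I = {\bf Q}\,\}$, this yields (i).

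Next I would establish the separation and peeling statements (ii)--(v) by induction on $n$, treating them as a single package. The engine is the following root-system observation: if $a \in A_I^+$ and some $\alpha \in \Delta - I$ satisfies $|\alpha(a)| < 2t_n$, then $a$ lies within a bounded neighborhood of $A_{I \cup \{\alpha\}}^+$, so that $U_{\Phi(I)^+}{\bf M}_I(\mathcal{O}_S) a$ gets absorbed, up to a bounded set, into $U_{\Phi(I\cup\{\alpha\})^+}{\bf M}_{I\cup\{\alpha\}}(\mathcal{O}_S)A_{I\cup\{\alpha\}}^+$; this gives (iv) and, in the terminal case $|I|=|\Delta|-1$, (v), since then $\mathbf{P}_{I\cup\{\alpha\}}={\bf G}$ and the Levi piece already lies in ${\bf G}(\mathcal{O}_S)$. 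For (ii), I would use the Bruhat decomposition: if ${\bf Q} \neq {\bf Q}'$ lie in $\mathcal{P}(n)$, then conjugating one onto standard form expresses an element of $R_{\bf Q'}$ as $pwp'$ with $w$ representing a nontrivial Weyl coset, and the norm bounds inherent in $R_{\bf Q}(t_n)$ force $|\alpha(a)| \to \infty$ along some $\alpha \in \Delta - I$, which translates via the metric description from Section~\ref{s:p} into an arbitrarily large separation as $t_n \to \infty$. For (iii), the same norm growth combined with discreteness of ${\bf G}(\mathcal{O}_S)$ in $G$ and the fact that only finitely many ${\bf G}(\mathcal{O}_S)$-orbits meet a fixed Siegel set shows that $t_n$ can be chosen large enough to avoid ${\bf G}(\mathcal{O}_S)B_n$ entirely.

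For (vi), the map is essentially the identity map on $R_{\bf Q}(t_n)B_n$ followed by left-multiplication by a representative in $\Lambda_{\bf Q}^{-1}$ (carrying $\mathbf{Q}$ back to $\mathbf{P}_I$). Since $\Lambda_{\bf Q}$ is finite and fixed, and conjugation by any of its elements is a fixed $K$-automorphism of $\mathbf{G}$ carrying $\mathbf{Q}$ to $\mathbf{P}_I$, this produces a quasi-isometry with constants $(L,C)$ independent of $\mathbf{Q}$. The description of the metric on $G$ recalled in Section~\ref{s:p}, together with the Iwasawa decomposition relative to $\mathbf{P}_I$, ensures that the resulting map lands in a bounded neighborhood of $U_{\Phi(I)^+}{\bf M}_I(\mathcal{O}_S)A_I^+$; the boundary piece $\partial A_I^+$ is preserved because the map commutes with the root characters $\alpha \in \Delta - I$ (up to the Weyl element, which permutes $\Delta - I$ but preserves the set of extremal walls).

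The main obstacle I anticipate is the inductive calibration of the constants $(t_n, B_n)$: the choice of $t_n$ must be large enough both to force separation in (ii) and to absorb arithmetic elements in (iii) for \emph{all} $B_n$ produced by earlier steps of the induction, yet the peeling in (iv) enlarges $B_n$ to $B_{n+1}$ in a way controlled by $t_n$. I would handle this by making $B_{n+1}$ explicitly a $2t_n$-neighborhood of $B_n$ in the $A_I$-direction together with a fixed compact set accounting for the ${\bf M}_I(\mathcal{O}_S)$-fundamental domain, and by choosing $t_n$ after $B_n$ but before $B_{n+1}$, which makes the induction well-defined. The function-field case requires an extra check that the compact sets $B_n$ can be chosen to be subsets of a maximal compact subgroup times a bounded piece of the building, but this follows from the Bruhat--Tits description of ${\bf G}(K_v)$ for $v$ non-archimedean.
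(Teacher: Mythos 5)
Your high-level organization matches the paper's (reduction theory for $(i)$, a Bruhat-decomposition argument for $(ii)$, a root-combing argument for $(iv)$--$(v)$, translation to a standard model for $(vi)$), and your handling of $(iv)$--$(v)$ is essentially the same as the paper's: if $|\alpha(a)| < 2t_n$ for some $\alpha\in\Delta-I$ then after absorbing a compact factor one lands in $A^+_{I\cup\{\alpha\}}$ and, using cocompactness of ${\bf U}_{\Phi(I)^+\cap[J]^+}(\mathcal{O}_S)$, in $R_{\bf Q'}$. However, there are real gaps elsewhere.

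First, a factual error that propagates: you assert that $\Lambda_{\bf Q}$ is finite. It is not --- by its definition it contains $({\bf P}_I(\mathcal{O}_S)\gamma f)$ for a fixed representative $\gamma f$ with ${}^{\gamma f}{\bf P}_I = {\bf Q}$, hence is infinite. Because $R_{\bf Q}(t)$ is formed using the entire set $\Lambda_{\bf Q}$, you cannot simply translate by ``a representative in $\Lambda_{\bf Q}^{-1}$'' to obtain $(vi)$; one must first show that for any $\gamma f, \gamma' f'\in\Lambda_{\bf Q}$, the set $(\gamma f)^{-1}\gamma' f'\, U_{\Phi(I)^+}{\bf M}_I(\mathcal{O}_S)A^+_I(t)$ is absorbed into $U_{\Phi(I)^+}{\bf M}_I(\mathcal{O}_S)A^+_I(t)B$ for a fixed compact $B$, uniformly in $t$. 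That is a nontrivial commensurability argument (the paper's Lemma~\ref{l:32}, using that $(\gamma f)^{-1}\gamma' f'\in {\bf P}_I(K)$ and conjugating $S$-arithmetic subgroups through finite-index passages). Without it, $(ii)$ and $(vi)$ are not established.

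Second, your arguments for $(ii)$ and $(iii)$ appeal to ``norm growth implies separation'' and ``discreteness plus finitely many orbits,'' but neither supplies the actual mechanism. The sets involved are all unbounded, so norm growth along some root does not by itself bound them away from each other or from ${\bf G}(\mathcal{O}_S)$. The key idea in the paper is a \emph{proximity-to-integers contrast}: one manufactures, for every pair $({\bf Q}, {\bf Q'})$ via the Bruhat coordinate $w$ of the connecting element, a specific root $\tau_{J,I,w}\in\Phi^+\cap(\Phi^w)^-$ (chosen so that $\alpha+\tau_{J,I,w}\notin\Phi$ for all $\alpha\in\Phi^+\cap(\Phi^w)^-$, so that $\tau_{J,I,w}|_{A^+_I(1)}\geq 1$, and so that it is driven small on $^wA^+_J(t)$). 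Then one shows (a Margulis-lemma-type argument, Lemma~\ref{l:far}): in the region where $|\tau_{J,I,w}|$ is large, conjugating a fixed nontrivial $\gamma\in{\bf U}_{(\tau_{J,I,w})}(\mathcal{O}_S)$ into a small neighborhood of $1$ is possible, so by discreteness these points are uniformly far from ${\bf G}(\mathcal{O}_S)$; while conversely certain products occurring in the Bruhat expression are provably \emph{close} to ${\bf G}(\mathcal{O}_S)$ (Lemma~\ref{l:close}). Contrasting these two yields $(ii)$, and the special case of the highest root yields $(iii)$. Your sketch does not construct $\tau_{J,I,w}$, does not use any commuting-unipotent / discreteness argument, and therefore has no way to turn ``separation should grow'' into a quantitative, $B_n$-uniform estimate.
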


Proposition~\ref{p:prune} can be deduced from the Borel-Harish-Chandra-Behr-Harder reduction theory. The case when $K$ is a number field can be deduced from work of Borel (\cite{Boreltext} and \cite{Bo}) and the case when $K$ is a function field can be shown using Harder's work \cite{H}.

In the appendix, Section~\ref{s:a}, we give a more unified proof of Proposition~\ref{p:prune}.

\section{Filling spheres in solvable groups}\label{s:sol}

In this section we will prove

\begin{proposition}\label{p:solvable}
Let $I\se \Delta $, $0 < n \leq |S|-2$, and $r>0$.
There is some $m \in \mathbb{N}$ and $r'>0$ such that if $\Sigma$ is an $r$-coarse $n$-sphere in $U_{\Phi(I)^+}{\bf A}_I(\mathcal{O}_S)$, then there is an $r'$-coarse $(n+1)$-ball in $U_{\Phi(I)^+}{\bf A}_I(\mathcal{O}_S)$ whose volume is $O({\text{\emph{vol}}}(\Sigma )^m)$ and whose boundary is $\Sigma$.

\end{proposition}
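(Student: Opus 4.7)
The plan is to exploit the solvable structure of $H := U_{\Phi(I)^+}\,{\bf A}_I(\mathcal{O}_S)$, a semidirect product of the locally compact nilpotent radical $U_{\Phi(I)^+}$ by the discrete abelian group ${\bf A}_I(\mathcal{O}_S)$. By the $S$-unit theorem, ${\bf A}_I(\mathcal{O}_S)$ is virtually $\ZZ^{k}$ with $k=(|\Delta|-|I|)(|S|-1)\ge |S|-1\ge n+1$ (using $I\subsetneq\Delta$), and therefore satisfies a polynomial (in fact Euclidean) $n$-dimensional isoperimetric inequality. The filling will be assembled from two pieces: a coarse cylinder $C$ from $\Sigma$ to its abelian projection, and a coarse $(n+1)$-ball filling that projection inside ${\bf A}_I(\mathcal{O}_S)$.

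After a left translation (an isometry) we may assume $1\in\Sigma$, so every vertex $x_i=u_i a_i$ has $d_G(1,x_i)=O(rN)$, where $N=\mathrm{vol}(\Sigma)$; this gives $\log\|u_i\|=O(rN)$ and $d_G(1,a_i)=O(rN)$. The projection $\pi\colon H\to{\bf A}_I(\mathcal{O}_S)$, $ua\mapsto a$, is coarsely Lipschitz, so $\pi(\Sigma)=\{a_i\}$ is a coarse $n$-sphere of scale $O(r)$ in ${\bf A}_I(\mathcal{O}_S)\simeq \ZZ^{\ge n+1}$. Fill $\pi(\Sigma)$ by a coarse $(n+1)$-ball $\bar D\subset{\bf A}_I(\mathcal{O}_S)$ of polynomial volume using the Euclidean isoperimetric inequality in $\ZZ^{\ge n+1}$, and lift $\bar D$ to $\tilde D\se H$ by attaching the identity unipotent part at every vertex; since all unipotent parts in $\tilde D$ are bounded, adjacent vertices remain $O(r')$-close in $H$. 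Gluing $\tilde D$ to $C$ along $\pi(\Sigma)$ then produces the desired coarse $(n+1)$-ball filling $\Sigma$, with polynomial total volume.

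The heart of the argument — and the main obstacle — is the construction of the cylinder $C$. For each vertex $x_i$ one must produce an $H$-path from $x_i$ to $a_i$ of length $O(rN)$ and $O(N)$ vertices at some scale $r'$, with adjacent strands moving in lockstep so that the resulting object is a coarse $(n+1)$-manifold with boundary $\Sigma\sqcup\pi(\Sigma)$. The naive "conjugate by a large torus element $t\in{\bf A}_I^+\cap{\bf A}_I(\mathcal{O}_S)$" strategy, which works for purely archimedean solvable groups in the style of Gromov, is obstructed by the product formula $\sum_{v\in S}\log|\alpha(t)|_v=0$: no single $t\in{\bf A}_I(\mathcal{O}_S)$ can simultaneously shrink the unipotent part at every valuation in $S$. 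The remedy is to replace one global contraction by a scheduled sequence of $O(rN)$ moves, each pairing multiplication by a generator of ${\bf A}_I(\mathcal{O}_S)$ (which shifts the norm of the unipotent part between places of $S$) with a bounded move within $U_{\Phi(I)^+}$ (which shaves off a bounded amount of norm at the place targeted by that move). Running the same schedule in parallel across all $N$ strands preserves adjacency and yields $\mathrm{vol}(C)=O(N^2)$.

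The polynomial rank of the $S$-unit lattice plays a dual role here: it provides enough independent rebalancing directions among the valuations of $S$ for the cylinder construction, and it supports the Euclidean filling of the abelian sphere $\pi(\Sigma)$. The hypothesis $n\le|S|-2$ (equivalently $n+1\le|S|-1$) is used in both places. Once the cylinder $C$ is in hand, the rest of the assembly — filling in the abelian lattice, lifting, and gluing — is standard.
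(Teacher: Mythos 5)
Your outline correctly identifies the central obstruction: the product formula $\prod_{v\in S}|\alpha(t)|_v=1$ blocks any single torus element of ${\bf A}_I(\mathcal{O}_S)$ from contracting the whole unipotent part. Your overall plan (build a coarse cylinder from $\Sigma$ to its abelian shadow, then fill in ${\bf A}_I(\mathcal{O}_S)$, which by Dirichlet has rank $(|\Delta|-|I|)(|S|-1)\ge |S|-1\ge n+1$) is also the right shape. The gap is in the cylinder, which is exactly where the difficulty lives. The ``scheduled sequence of moves'' is an asserted remedy, not a construction: (a) the shaving step must depend on the strand (different vertices have different unipotent norms), so the schedule is not really the same across strands; (b) the natural moves available --- right-multiplication by $b\in{\bf A}_I(\mathcal{O}_S)$, conjugation by $b$, or left multiplication by a unipotent --- do not obviously keep adjacent strands within bounded distance, because conjugation by a torus element is not an isometry and it expands at some places while contracting at others; and (c) for $n\ge 2$ it is not enough for strands to stay close pairwise, one has to exhibit the coarse $(n+1)$-prism cells between time slices, and nothing in the proposal controls the higher faces. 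Saying the adjacency is preserved is precisely the claim that needs a proof.

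The paper avoids this by adding a structural step you do not have. It first subdivides $\Sigma$ into a polysimplicial sphere all of whose $k$-cells lie in $k$-slices, i.e., cosets of $\bigl[\prod_{v\in S'}{\bf U}_{\Phi(I)^+}(K_v)\bigr]{\bf A}_I(\mathcal{O}_S)$ with $|S'|=k$ (Lemma~\ref{l:sub}); this is possible with only a bounded blow-up in volume. The point of a $k$-slice with $k<|S|$ is that the product-formula obstruction disappears there: by Lemma~\ref{l:tproj} one can find $b\in{\bf A}_I(\mathcal{O}_S)$ with $|\alpha(b)|_v<1$ simultaneously for all $v\in S'$, so the naive ``conjugate by $b^N$'' trick does work inside a slice and yields a linear-length cylinder from a slice-sphere to ${\bf A}_I(\mathcal{O}_S)$ (Lemma~\ref{l:slicefill}, Corollaries~\ref{c:path} and~\ref{c:sphere}). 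The global cylinder is then built by projecting to the $(|S|-1)$-slice and filling inductively over skeleta: each $k$-cell of $\Sigma$, together with its projection and the cylinder pieces on its faces, forms a $k$-sphere in a $(k+1)$-slice, and $k+1\le n+1\le |S|-1<|S|$ is exactly the inequality that guarantees the slice-filling machinery applies (Lemma~\ref{l:foo}). So the hypothesis $n\le|S|-2$ enters not through the rank of ${\bf A}_I(\mathcal{O}_S)$ as you suggest, but through the requirement that spheres encountered in the induction live in slices where the product-formula obstruction is absent. Your proposal should be repaired by replacing the ad hoc scheduled moves with this subdivision-into-slices step.
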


Proposition~\ref{p:solvable} will be used to prove Proposition~\ref{p:bound} in the next section. Our proof of Proposition~\ref{p:solvable} is motivated by a proof of Gromov's that certain solvable Lie groups have a simply connected asymptotic cones (2.B.f  \cite{G}).

\subsection{Reducing to cells in slices} Before the next lemma, we need a couple of definitions.

Given a coarse manifold $\Sigma$ that is the image under a function $f$ of the vertices of a triangulated manifold $M$, a coarse (polysimplicial) subdivision of $\Sigma$ is an extension of $f$ to the vertices of a (polysimplicial) subdivision of $M$.

A $k$\emph{-slice} in $U_{\Phi(I)^+}{\bf A}_I(\mathcal{O}_S)$ is a left-coset of $$\Big[\prod_{v\in S'}{\bf U}_{\Phi(I)^+}(K_v)\Big]{\bf A}_I(\mathcal{O}_S)$$ for some $S'\se S$ with $|S'|=k$.

\begin{lemma}\label{l:sub}
Given $r>0$ and $n\in \mathbb{N}$, there is some $r'>0$ such that any coarse $n$-sphere $\Sigma \se U_{\Phi(I)^+}{\bf A}_I(\mathcal{O}_S)$ of scale $r$ can be subdivided into a coarse polysimplicial $n$-sphere $\Sigma '$ of scale $r'$ such that every coarse $k$-cell in $\Sigma '$ is contained in a $k$-slice, and such that $\text{\emph{vol}}(\Sigma ')= O(\text{\emph{vol}}(\Sigma ))$.
\end{lemma}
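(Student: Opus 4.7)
The plan is to exploit the (essentially) direct-product structure
$$U_{\Phi(I)^+}{\bf A}_I(\mathcal{O}_S)=\Bigl(\prod_{v\in S}{\bf U}_{\Phi(I)^+}(K_v)\Bigr)\cdot{\bf A}_I(\mathcal{O}_S)$$
and subdivide each simplex of $M$ by inserting intermediate vertices that change one coordinate factor at a time. Fix an ordering $S=\{v_1,\dots,v_s\}$, so that every $g$ in the group above factors uniquely as $g=u_{v_1}(g)\,u_{v_2}(g)\cdots u_{v_s}(g)\,a(g)$ with $u_{v_j}(g)\in{\bf U}_{\Phi(I)^+}(K_{v_j})$ and $a(g)\in{\bf A}_I(\mathcal{O}_S)$; these are the ``coordinates'' of $g$. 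Because $G$ is a product of its local factors and ${\bf A}_I(\mathcal{O}_S)$ is discrete in ${\bf A}_I$, one verifies from the metric description in Section~\ref{s:p} that $d_G(g,h)\leq r$ forces each of $d_G(u_{v_j}(g),u_{v_j}(h))$ and $d_G(a(g),a(h))$ to be $O(r)$.

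First I would treat the $1$-dimensional case, which is also the base step. For each edge $[p,q]$ of $M$, insert $s$ intermediate vertices $q_i^{\,p}$ ($i=1,\dots,s$), where $q_i^{\,p}$ has $v_j$-coordinate $u_{v_j}(q)$ for $j\leq i$ and $u_{v_j}(p)$ for $j>i$, with ${\bf A}_I$-coordinate $a(p)$. Together with the original endpoint $q$ (which accounts for the change in the ${\bf A}_I$-factor), these form a staircase of $s+1$ sub-edges, each varying in exactly one of the $s+1$ coordinate factors and hence contained in a $1$-slice; by the coordinate estimate each sub-edge has $G$-length $O(r)$.

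To handle higher-dimensional simplices I would extend this to a polysimplicial subdivision modeled on the staircase/shuffle decomposition of a product of simplices. Fix a global ordering of the vertices of $M$ (so that adjacent simplices subdivide compatibly on common faces), and for each $n$-simplex $\tau=[p_0,\dots,p_n]$ introduce new vertices of the form
$$u_{v_1}(p_{i_1})\,u_{v_2}(p_{i_2})\cdots u_{v_s}(p_{i_s})\,a(p_{i_{s+1}})$$
for appropriate (weakly increasing) index tuples $(i_1,\dots,i_{s+1})\in\{0,\dots,n\}^{s+1}$, and form polysimplicial cells whose vertices only vary in a subset of the $s+1$ coordinate factors. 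A cell varying in $k$ of these factors can be realized as a product of $1$-simplices (one per varying factor) and so lies in a $k$-slice.

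The two quantitative conclusions then follow immediately: the number of new vertices per $n$-simplex of $M$ is a constant depending only on $n$ and $s$, so $\mathrm{vol}(\Sigma')=O(\mathrm{vol}(\Sigma))$, and every $1$-cell of the subdivision has $G$-length $O(r)$ by the coordinate estimate, so some $r'=O(r)$ suffices. The main obstacle I expect is the combinatorial bookkeeping: verifying that the above prescription really yields a polysimplicial refinement of the abstract simplex $\Delta^n$, that each $k$-cell lies in a $k$-slice, and that the refinements match on common faces of adjacent simplices of $M$. This reduces to a purely combinatorial statement about triangulations of products of simplices (in the spirit of the Eilenberg--Zilber shuffle decomposition), and does not interact with the arithmetic setup.
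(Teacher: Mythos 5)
Your proposal is correct and rests on the same core observation as the paper's proof: after writing an element of $U_{\Phi(I)^+}{\bf A}_I(\mathcal{O}_S)$ in ``coordinates'' $\bigl(u_{v_1},\ldots,u_{v_s},a\bigr)$, a coarse cell whose vertices vary in only $k$ of the $s+1$ coordinate factors lies in a $k$-slice, and the metric bound $d(g,h)\le r$ controls each coordinate separately. The paper encodes this by building, for each $k$-simplex $\sigma=[x_1,\ldots,x_{k+1}]$, the product polysimplicial complex $Z_\sigma=\bigl(\prod_v Z_{\sigma,v}\bigr)\times Z_{\sigma,A}$ (a product of $|S|+1$ abstract $k$-simplices) and a map $h$ of its vertex set into the group; a $k$-cell of $Z_\sigma$ projects to a positive-dimensional simplex in at most $k$ of the factors and so maps into a $k$-slice. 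Where you and the paper diverge is in how the subdivision of $\sigma$ is actually produced. You propose to write it down explicitly as a shuffle/staircase subdivision of $\Delta^k$ embedded in $\prod\Delta^k$, using a global vertex ordering of $M$ for face-compatibility; you correctly flag the combinatorial bookkeeping as the main burden. The paper instead proceeds by induction on dimension and never exhibits the cells: it subdivides all edges of $\Sigma$ first, then all $2$-simplices, etc., and at the $k$th stage merely observes that the already-subdivided boundary $\bigcup_i\widetilde{\tau_i}$ is a $(k-1)$-sphere subcomplex of the contractible, uniformly bounded complex $Z_\sigma$, so it bounds some subcomplex $k$-ball $\widetilde{\sigma}$ of uniformly bounded size, every $k$-cell of which lands in a $k$-slice. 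Building dimension by dimension automatically yields compatibility on shared faces, so no ordering trick is needed. Both routes give the same quantitative conclusions (bounded blow-up in the number of cells, hence $\mathrm{vol}(\Sigma')=O(\mathrm{vol}(\Sigma))$, and a new scale $r'$ depending only on $r$, $n$, $|S|$); yours is more constructive, the paper's is shorter because it replaces the explicit shuffle combinatorics with a soft filling argument in a complex with a uniformly bounded number of cells.
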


\begin{proof}

Let $$\pi _v :U_{\Phi(I)^+}{\bf A}_I(\mathcal{O}_S) \rightarrow {\mathbf{U}}_{\Phi(I)^+}(K_v)$$ and  $$\pi _A :U_{\Phi(I)^+}{\bf A}_I(\mathcal{O}_S) \rightarrow {\bf A}_I(\mathcal{O}_S)$$ be the obvious projection maps.

Let $\sigma \se \Sigma$ be a $k$-simplex and  let $x_1,x_2,...,x_{k+1}$ be its vertices. 
For each $v \in S$, we form an abstract $k$-simplex with  vertices  $\pi _v (x_1),...,\pi _v (x_{k+1})$. Call this simplex $Z_{\sigma,v}$. Let $Z_{\sigma ,A}$ be the abstract $k$-simplex  with  vertices  $\pi _A (x_1),...,\pi _A (x_{k+1})$.

Let $$Z_\sigma=\big(\prod _{v \in S} Z_{\sigma,v}\big)\times Z_{\sigma,A}$$ 
Notice that $Z_\sigma$  is a polysimplicial complex, that is homeomorphic to a $(|S|+1)k$-ball, and that the number of cells in $Z_\sigma$ is bounded by a constant depending only on $k$ and $|S|$.
There is also an obvious function from the vertices of $Z_\sigma$ into $U_{\Phi(I)^+}{\bf A}_I(\mathcal{O}_S)$, which we will denote as $h$.

Because $\Sigma$ has scale $r$, there is some $r'>0$ depending only on $r$ and $n$ such that if $u$ and $w$ are vertices in $Z_\sigma$, then the distance between $h(u)$ and $h(w)$ is at most $r'$. Thus, the vertices of any $k$-cell in $Z_\sigma$ maps via $h$ to a coarse $k$-cell in $U_{\Phi(I)^+}{\bf A}_I(\mathcal{O}_S)$ of diameter at most $r'$. Furthermore, any such coarse $k$-cell must be contained in a $k$-slice, since it projects to a positive dimensional simplex in at most $k$ of the $Z_{\sigma,v}$ factors.

Vertices of $\Sigma$ are clearly contained in 0-slices. Suppose $\sigma \se \Sigma$ is a coarse 1-simplex. Its faces --- that is, its endpoints --- are represented by vertices in $Z_\sigma$. Connect them with a path  $\widetilde{\sigma}$ in the 1-skeleton of $ Z_\sigma$.  Since  the number of edges in $Z_\sigma$ is bounded, $\widetilde{\sigma}$ consists of a bounded number of edges. Map the vertices of  $\widetilde{\sigma}$ into 
$U_{\Phi(I)^+}{\bf A}_I(\mathcal{O}_S) $
 via $h$, and we have subdivided $\sigma$ into a uniformly bounded number of coarse 1-polysimplices of scale $r'$ such that each coarse $1$-polysimplex is contained in a $1$-slice. 

We continue by induction. 

Let $\sigma$ now be a coarse $k$-simplex in $\Sigma$. By induction hypothesis we may assume that the faces of $\sigma$, named $\tau_1,\tau_2,...,\tau_{k+1}$, have been subdivided into coarse $(k-1)$-polysimplices of scale $r'$ that are contained in $(k-1)$-slices.

The subdivided $\tau _i$ are represented by complexes  $\widetilde{\tau_i} \se Z_{\tau_i}$. Since $\tau_i \se \sigma$, we have $Z_{\tau_i} \se Z_\sigma$ and that $\cup _i \widetilde{\tau_i} \se Z_\sigma$ is the continuous image of a $(k-1)$-sphere.

 Because $Z_\sigma$ has a uniformly bounded number of polysimplices, there is a topological $k$-ball $\widetilde{\sigma} \se Z_\sigma$ whose boundary is  $\cup _i \widetilde{\tau_i} \se Z_\sigma$ and such that via $h$, the vertices of $\widetilde{\sigma}$ represent a coarse polysimplicial $k$-ball of scale $r'$ in $U_{\Phi(I)^+}{\bf A}_I(\mathcal{O}_S) $, whose volume is bounded by a constant depending on $n$ and $|S|$, and such that each coarse $k$-polysimplex in the coarse ball is contained in a $k$-slice.
 
 The result is a coarse polysimplicial subdivision of $\Sigma$ all of whose coarse $k$-polysimplices are contained in $k$-slices.

 \end{proof}

\subsection{The geometry of slices}

The convenience of reducing the problem of filling spheres in $ U_{\Phi(I)^+}{\bf A}_I(\mathcal{O}_S)$ to filling in spheres in $k$-slices of  $U_{\Phi(I)^+}{\bf A}_I(\mathcal{O}_S)$ is that $k$-slices are negatively curved in some sense. Our proof of Lemma~\ref{l:slicefill} makes these remarks more precise, but first we will need

\begin{lemma}\label{l:tproj}
If $I \se \Delta$ and $S'$ is a proper subset of $S$, then the projection of ${\bf A}_I(\mathcal{O}_S)$ into $\prod_ {v \in S'} {\bf A}_I(K_v)$ is a finite Hausdorff distance from $\prod_ {v \in S'} {\bf A}_I(K_v)$.
\end{lemma}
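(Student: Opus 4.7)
The plan is to reduce to the Dirichlet $S$-unit theorem. Because $\mathbf{A}_I$ is a $K$-split torus of dimension $d=|\Delta|-|I|$, it is $K$-isomorphic to $(\mathbb{G}_m)^d$; under this identification, $\mathbf{A}_I(\mathcal{O}_S)\cong (\mathcal{O}_S^\times)^d$ and $\prod_{v\in S'}\mathbf{A}_I(K_v)\cong\prod_{v\in S'}(K_v^\times)^d$. The projection decomposes as a product of $d$ identical copies, so it is enough to handle the case $d=1$: show that the diagonal image of $\mathcal{O}_S^\times$ is a net in $\prod_{v\in S'}K_v^\times$.

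Take logarithms of absolute values to get a map $\mathrm{Log}\colon \prod_{v\in S}K_v^\times \to \mathbb{R}^{|S|}$, $(x_v)\mapsto (\log|x_v|_v)_v$. The metric that $\prod_{v\in S}K_v^\times$ inherits from $G$ becomes coarsely equivalent, via $\mathrm{Log}$, to the Euclidean metric on the image, with compact kernel on each archimedean factor and image $(\log q_v^{-1})\mathbb{Z}$ on each non-archimedean factor. By the product formula, $\mathrm{Log}(\mathcal{O}_S^\times)$ lies in the hyperplane $H=\{y\in\mathbb{R}^{|S|}:\sum_v y_v=0\}$, and the $S$-unit theorem (valid in characteristic zero and positive characteristic alike) says that $\mathrm{Log}(\mathcal{O}_S^\times)$ is a cocompact lattice in $H$.

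Now let $\pi\colon \mathbb{R}^{|S|}\to\mathbb{R}^{|S'|}$ be the coordinate projection. Since $S\setminus S'$ is nonempty, the restriction $\pi|_H$ is surjective: any $y'\in\mathbb{R}^{|S'|}$ extends to a vector in $H$ by distributing $-\sum_{v\in S'}y'_v$ among the coordinates indexed by $S\setminus S'$. Consequently $\pi(\mathrm{Log}(\mathcal{O}_S^\times))$ is a subgroup of $\mathbb{R}^{|S'|}$ that $\mathbb{R}$-linearly spans $\mathbb{R}^{|S'|}$, being the image of a cocompact lattice under a surjective linear map of real vector spaces. Any subgroup of $\mathbb{R}^{|S'|}$ that linearly spans contains a full-rank sublattice and is in particular a net. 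Pulling this back through $\mathrm{Log}$ yields the claimed finite Hausdorff distance in $\prod_{v\in S'}K_v^\times$: at non-archimedean $v\in S'$, $\mathrm{Log}(\mathcal{O}_S^\times)$ already takes values in the discrete cocompact subgroup $(\log q_v^{-1})\mathbb{Z}$ that is the image of $K_v^\times$, so no density issues arise at those coordinates, and the remaining slack is absorbed by the compact kernels.

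The only essential inputs are the $S$-unit theorem and the obvious surjectivity of the coordinate projection from the norm-one hyperplane onto any proper subset of coordinates. The only thing requiring any care is the bookkeeping translating between the left-invariant metric on $\prod_{v\in S'}K_v^\times$ and the Euclidean metric on its $\mathrm{Log}$-image; since this translation is a coarse equivalence with compact kernel, no substantive obstacle arises.
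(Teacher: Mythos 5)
Your proof is correct and rests on the same two ingredients as the paper's: Dirichlet's $S$-unit theorem (to identify $\mathrm{Log}(\mathcal{O}_S^\times)$ as a cocompact lattice in the norm-zero hyperplane) and the product formula (which puts the units in that hyperplane). The paper first reduces to $S'=S\setminus\{w\}$ and argues by matching dimensions plus a bounded-kernel check, whereas you handle general $S'$ directly by showing the coordinate projection restricted to the hyperplane is surjective; these are the same argument phrased dually.
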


\begin{proof}
It suffices to prove the lemma for the case when $S' =S-\{w\}$ for some $w \in S$. 

Notice that the geometric dimension of ${\bf A}_I(\mathcal{O}_S)$ equals $(|S|-1)(|\Delta -I|)$ (by Dirichlet's units theorem), as does  $\prod_ {v \in S'} {\bf A}_I(K_v)$. So it suffices to check that the kernel of the projection ${\bf A}_I(\mathcal{O}_S)\rightarrow \prod_ {v \in S'} {\bf A}_I(K_v)$ is bounded. 
But if $a \in {\bf A}_I(\mathcal{O}_S)$ is trivial in  ${\bf A}_I(K_v)$ for each $v \in S'$, then by the product formula, $a$ has trivial norm in  ${\bf A}_I(K_w)$, and thus $a$ is bounded in $A_I$.

\end{proof}

\begin{lemma}\label{l:slicefill}

Let $I \se \Delta$, let $k<|S|$, and let $n,r \geq 0$. Then there is some $r'>0$ such that any coarse polysimplicial $n$-sphere $\Sigma$ of scale $r$ that is contained in a $k$-slice of  $U_{\Phi(I)^+}{\bf A}_I(\mathcal{O}_S)$
 bounds a coarse polysimplicial  $(n+1)$-ball, denoted $D$, which is of scale $r'$ and is contained in the same $k$-slice with ${\text{\emph{vol}}}(D)=O({\text{\emph{diam}}}(\Sigma)^{n+1}+\text{\emph{vol}}(\Sigma)\text{\emph{diam}}(\Sigma))$.

\end{lemma}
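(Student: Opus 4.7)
The strategy is to exploit the semidirect-product structure of the slice: the lattice ${\bf A}_I(\mathcal{O}_S)$ acts on $\prod_{v\in S'}{\bf U}_{\Phi(I)^+}(K_v)$ by the root characters and contracts it exponentially in one chamber. We may assume $I\subsetneq\Delta$ (otherwise ${\bf A}_I$ is finite and the statement is trivial), and after left-translating by a coset representative we may assume the slice is the subgroup $\prod_{v\in S'}{\bf U}_{\Phi(I)^+}(K_v)\cdot{\bf A}_I(\mathcal{O}_S)$. Write $D=\text{diam}(\Sigma)$ and $V=\text{vol}(\Sigma)$. The plan is to build the filling in two pieces: a \emph{prism} from $\Sigma$ to a translate $a\Sigma$, glued to a \emph{cap} for $a\Sigma$.

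First I would choose a contracting element $a\in{\bf A}_I(\mathcal{O}_S)$ with $d(1,a)=O(D)$ whose projection to $\prod_{v\in S'}{\bf A}_I(K_v)$ lies so deep in the chamber $\{|\alpha(\cdot)|_v\le 1:\alpha\in\Phi(I)^+,\,v\in S'\}$ that for every vertex $x$ of $\Sigma$ the conjugated $U$-coordinate of $ax$ has norm $O(1)$; this is achievable since $\|u\|=O(e^{O(D)})$ by the relation $\log(\|u\|+1)\asymp d(1,u)$ recalled in the preliminaries. The existence of such $a$ in the lattice (rather than only in the real torus $\prod_v{\bf A}_I(K_v)$) is precisely where the hypothesis $k<|S|$ enters: by Lemma~\ref{l:tproj}, the projection ${\bf A}_I(\mathcal{O}_S)\to\prod_{v\in S'}{\bf A}_I(K_v)$ is cobounded, so lattice elements can be found within bounded distance of any prescribed chamber element.

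For the prism, factor $a=a_1\cdots a_T$ with $T=O(D)$ and each $a_i$ of bounded norm, and for each vertex $x\in\Sigma$ form the scale-$O(1)$ coarse path $x,a_1x,a_2a_1x,\ldots,ax$. Taking the polysimplicial product of each cell of $\Sigma$ with this $T$-subdivided interval yields a coarse polysimplicial $(n{+}1)$-chain in the slice of scale $O(1)$ and volume $O(V\cdot D)$, whose boundary is $\Sigma\sqcup a\Sigma$. By construction every vertex of $a\Sigma$ has $U$-coordinate of norm $O(1)$, so $a\Sigma$ lies in a bounded $U$-thickening of ${\bf A}_I(\mathcal{O}_S)$; that thickening is quasi-isometric to ${\bf A}_I(\mathcal{O}_S)\cong\mathbb{Z}^d$ with $d=(|S|-1)\,|\Delta-I|$ by Dirichlet's units theorem. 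In the intended regime $n\le|S|-2$, and using $|\Delta-I|\ge 1$, we have $n+1\le|S|-1\le d$, so $a\Sigma$ (of diameter $O(D)$ in the Euclidean lattice) admits a polysimplicial $(n{+}1)$-ball cap of scale $O(1)$ and volume $O(D^{n+1}+V\cdot D)$ by a standard Euclidean filling --- for instance by coning to an interior centroid with paths of length $O(D)$. Concatenating prism and cap yields the required filling.

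The main obstacle is that the contracting element must lie in the discrete ${\bf A}_I(\mathcal{O}_S)$, not merely in the ambient real torus, so that the prism-and-cap construction stays inside the slice. This is exactly what the strict inequality $k<|S|$ buys us, via the product formula on units together with Lemma~\ref{l:tproj}: the kernel of the projection of ${\bf A}_I(\mathcal{O}_S)$ onto its $S'$-components is bounded, forcing the image to be cocompact in $\prod_{v\in S'}{\bf A}_I(K_v)$. Secondary care is needed to preserve polysimplicial structure and uniformly bounded scale through both the prism and the cap, but this is routine (products of polysimplices with subdivided intervals are polysimplicial, and Euclidean fillings admit standard polysimplicial realizations).
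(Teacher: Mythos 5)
Your proposal is correct and follows essentially the same route as the paper: both proofs left-translate so the slice is a subgroup, invoke Lemma~\ref{l:tproj} to produce a lattice element $b\in{\bf A}_I(\mathcal{O}_S)$ contracting the $U$-coordinates in the $S'$-factors, march $\Sigma$ in $O(\text{diam}\,\Sigma)$ bounded steps until the $U$-coordinates are $O(1)$, fill the resulting sphere near the Euclidean lattice ${\bf A}_I(\mathcal{O}_S)$ at cost $O(\text{diam}(\Sigma)^{n+1})$, and glue the prism (volume $O(\text{vol}(\Sigma)\,\text{diam}(\Sigma))$) to the cap. The only differences are cosmetic: the paper contracts by right-multiplication ($\Sigma b^i$) while you use left-multiplication by a factored product $a_1\cdots a_T$ (slightly cleaner for scale control since left translations are isometries), and your aside about $n+1\le(|S|-1)|\Delta-I|$ is unnecessary --- coning works in any ambient dimension, and the lemma holds for all $n\ge0$, not only $n\le|S|-2$.
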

  
  \begin{proof}
  After left translation, we may assume $$1\in \Sigma \se \prod_{v \in S'} {\bf U}_{\Phi(I)^+}(K_v){\bf A}_I(\mathcal{O}_S)$$ where $|S'|=k$.
  
   Let $L$ be the diameter of $\Sigma$, and choose $u \in \prod_{v \in S'} {\bf U}_{\Phi(I)^+}(K_v)$ and $a \in {\bf A}_I(\mathcal{O}_S)$ such that $u a \in \Sigma$. Then $d(1,a)\leq L$.

By previous lemma, there is some $b \in {\bf A}_I(\mathcal{O}_S)$
such that $|\alpha(b)|_v <1$ for all $\alpha \in \Delta -I$ and $v \in S'$. Therefore,  $|\beta(b)|_v <1$ for all $\beta \in \Phi(I)^+$ and $v \in S'$. 

Notice that for $N=O(L)$, $d(ab^N,uab^N)=d(a,(b^{-N}ub^N)a) \leq 1$. Thus, we may assume that 
$  \Sigma b^N$ is contained in $ {\bf A}_I(\mathcal{O}_S)$, which is quasi-isometric to Euclidean space and thus there is an $r$-coarse polysimplicial ball $D' \se {\bf A}_I(\mathcal{O}_S)$ whose volume is $O(L^{n+1})$ and whose boundary is $  \Sigma b^N$.
Therefore, we can let $$D=D' \cup \bigcup_{i=0}^N  \Sigma b^N$$
  
  \end{proof}
  
 Notice that the volume of a coarse $0$ sphere equals $2$, and if $n>0$, then the diameter of an $r$-coarse $n$-sphere is bounded by $r(\text{vol}(\Sigma))$. Thus from Lemma~\ref{l:slicefill} we have the following two corollaries.

\begin{corollary}\label{c:path}

Let $I \se \Delta$, let $k<|S|$. Then there is some $r'>0$ such that if $x$ and $y$ are any two points in a $k$-slice of  $U_{\Phi(I)^+}{\bf A}_I(\mathcal{O}_S)$, then $x$ and $y$ are the endpoints of an $r'$-coarse path whose volume (or length) is $O(d(x,y))$. 

\end{corollary}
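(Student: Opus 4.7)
The plan is to deduce this directly by specializing Lemma~\ref{l:slicefill} to the case $n=0$. Given two points $x,y$ lying in a fixed $k$-slice of $U_{\Phi(I)^+}{\bf A}_I(\mathcal{O}_S)$, I would regard the pair $\{x,y\}$ as a coarse polysimplicial $0$-sphere $\Sigma$ in that slice. A $0$-sphere has two vertices but no edges, so the scale condition in the definition of a coarse manifold is vacuous; I may therefore feed $\Sigma$ into Lemma~\ref{l:slicefill} with any fixed input scale $r$ (take $r=1$, say), and let $r'$ be the output scale produced by the lemma for this $r$ and $n=0$.

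Lemma~\ref{l:slicefill} then yields a coarse polysimplicial $1$-ball $D$ of scale $r'$, contained in the same $k$-slice, with $\partial D = \Sigma = \{x,y\}$. Such a $D$ is, by definition, exactly a coarse path from $x$ to $y$ of scale $r'$, whose length agrees with its volume. The two inputs flagged in the paragraph immediately preceding Corollary~\ref{c:path} give $\text{vol}(\Sigma)=2$ and $\text{diam}(\Sigma)=d(x,y)$, so the volume estimate of Lemma~\ref{l:slicefill} specializes to
\[
\text{vol}(D) \;=\; O\bigl(\text{diam}(\Sigma)^{n+1} + \text{vol}(\Sigma)\cdot\text{diam}(\Sigma)\bigr) \;=\; O\bigl(d(x,y) + 2\,d(x,y)\bigr) \;=\; O(d(x,y)),
\]
which is the bound claimed by the corollary.

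There is no real obstacle here: the corollary is just Lemma~\ref{l:slicefill} at $n=0$, together with the observation that a coarse $1$-ball with two boundary points is the same object as a coarse path between them. The only small points worth checking are the vacuity of the scale condition for a $0$-sphere and the identification of volume with length for a coarse $1$-ball, both of which are immediate from the definitions given in the introduction.
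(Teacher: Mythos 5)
Your argument is exactly the paper's: the two remarks preceding the corollaries (that a coarse $0$-sphere has volume $2$, and that the diameter $d(x,y)$ is the relevant input) are stated precisely so that Corollary~\ref{c:path} falls out of Lemma~\ref{l:slicefill} at $n=0$, as you observe. The identification of a coarse $1$-ball with boundary $\{x,y\}$ as a coarse path from $x$ to $y$, and of its volume with its length, is immediate from the definitions, so your proof is complete and matches the intended one.
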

  
\begin{corollary}\label{c:sphere}

Let $I \se \Delta$, let $k<|S|$, and let $n,r > 0$. Then there is some $r'>0$ such that any coarse polysimplicial $n$-sphere of scale $r$ contained in a $k$-slice of  $U_{\Phi(I)^+}{\bf A}_I(\mathcal{O}_S)$
 bounds a coarse polysimplicial $(n+1)$-ball of scale $r'$ in the same $k$-slice with ${\text{\emph{vol}}}(D)=O({\text{\emph{vol}}}(\Sigma)^{n+1})$.

\end{corollary}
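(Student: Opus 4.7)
The plan is to derive Corollary~\ref{c:sphere} directly from Lemma~\ref{l:slicefill} by bounding the diameter of the coarse sphere in terms of its volume and scale. The scale $r'>0$ produced by Lemma~\ref{l:slicefill} (with the given $r$ and $n$) will serve as the scale for the filling in Corollary~\ref{c:sphere}; everything else is a matter of substituting one polynomial bound into another.

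First I would establish the elementary observation that for any coarse polysimplicial $n$-sphere $\Sigma$ of scale $r$ with $n>0$, one has $\mathrm{diam}(\Sigma) \leq r \cdot \mathrm{vol}(\Sigma)$. Indeed, the underlying triangulated $n$-manifold $M$ is connected (since $n\geq 1$), so any two of its vertices can be joined by an edge path in $M^{(1)}$ that uses at most $\mathrm{vol}(\Sigma)$ vertices; because adjacent vertices map to points at distance at most $r$ in the ambient slice, the images of the endpoints are at distance at most $r\cdot\mathrm{vol}(\Sigma)$.

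Next I would invoke Lemma~\ref{l:slicefill} for the given $I$, $k$, $n$, $r$, obtaining a scale $r'>0$ and a coarse polysimplicial $(n+1)$-ball $D$ lying in the same $k$-slice with $\partial D=\Sigma$ and
\[
\mathrm{vol}(D) \;=\; O\!\bigl(\mathrm{diam}(\Sigma)^{n+1}+\mathrm{vol}(\Sigma)\,\mathrm{diam}(\Sigma)\bigr).
\]
Substituting $\mathrm{diam}(\Sigma)\leq r\cdot\mathrm{vol}(\Sigma)$ yields $\mathrm{diam}(\Sigma)^{n+1}\leq r^{n+1}\,\mathrm{vol}(\Sigma)^{n+1}$ and $\mathrm{vol}(\Sigma)\,\mathrm{diam}(\Sigma)\leq r\,\mathrm{vol}(\Sigma)^{2}$. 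Since $n\geq 1$ we have $n+1\geq 2$, and since any nonempty coarse sphere has volume at least $1$, the $\mathrm{vol}(\Sigma)^{2}$ term is dominated by $\mathrm{vol}(\Sigma)^{n+1}$. The factors $r^{n+1}$ and $r$ are constants depending only on the fixed data $r$ and $n$, so they can be absorbed into the $O(\cdot)$ notation (which, per Section~\ref{s:p}, may depend on $G$ and ${\bf G}(\mathcal{O}_S)$ and on the fixed parameters of the statement). Hence $\mathrm{vol}(D)=O(\mathrm{vol}(\Sigma)^{n+1})$, which is the required bound.

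There is no serious obstacle here, as the corollary is an immediate quantitative consequence of Lemma~\ref{l:slicefill}; the only point worth checking carefully is the connectedness hypothesis needed for the diameter-versus-volume estimate, which is supplied by the standing assumption $n>0$.
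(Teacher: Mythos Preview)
Your proof is correct and is essentially identical to the paper's own argument: the paper records just before the two corollaries that for $n>0$ the diameter of an $r$-coarse $n$-sphere is bounded by $r\cdot\text{vol}(\Sigma)$, and then deduces Corollary~\ref{c:sphere} directly from Lemma~\ref{l:slicefill} by the same substitution you carry out. Your added justification of the diameter bound via connectedness of the underlying $n$-sphere (using $n>0$) makes explicit what the paper leaves implicit.
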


\subsection{Filling spheres that are piecewise in slices}  We use the negative curvature of $k$-slices to prove

\begin{lemma}\label{l:foo} For $r>0$ and $n \leq |S|-2$, there is some $m \in \mathbb{N}$ and  $r'>0$ such that
if $\Sigma \se U_{\Phi(I)^+}{\bf A}_I(\mathcal{O}_S)$ is an $r$-coarse polysimplicial $n$-sphere all of whose coarse $k$-cells are contained in $k$-slices, then there is
 an $r'$-coarse polysimplicial $(n+1)$-ball $D$ such that $\partial D = \Sigma$ and $\text{\emph{vol}}(D)=O(\text{\emph{vol}}(\Sigma)^m)$.
\end{lemma}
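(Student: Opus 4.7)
The plan is to combine per-cell filling inside slices (via Lemma~\ref{l:slicefill}) with an iterative reduction that drops the slice-dimension of top cells, ultimately reducing to a filling inside the abelian part ${\bf A}_I(\mathcal{O}_S)$. For each $n$-cell $\sigma$ of $\Sigma$, which lies in some $n$-slice (available because $n \leq |S|-2 < |S|$), I apply Lemma~\ref{l:slicefill} twice inside that $n$-slice: first to fill the $(n-1)$-sphere $\partial \sigma$ with an $n$-ball $B_\sigma$, and then to fill the $n$-sphere $\sigma \cup B_\sigma$ with an $(n+1)$-ball $D_\sigma$. Following the proof of Lemma~\ref{l:slicefill}, $B_\sigma$ has an explicit structure: it is the union $\bigcup_{i=0}^{N}\partial\sigma \cdot b_\sigma^i$ of translates of $\partial\sigma$ by powers of a contracting element $b_\sigma \in {\bf A}_I(\mathcal{O}_S)$, together with a Euclidean cap inside ${\bf A}_I(\mathcal{O}_S)$. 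Because $\partial\sigma$ has bounded volume and diameter, each $D_\sigma$ has volume $O(1)$. Writing $\Sigma' := \bigcup_\sigma B_\sigma$, the union $\bigcup_\sigma D_\sigma$ is a coarse $(n+1)$-chain of total volume $O(\text{vol}(\Sigma))$ whose boundary is $\Sigma \cup \Sigma'$.

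The key structural claim is that $\Sigma'$ not only inherits the cells-in-slices property but satisfies the strictly stronger one that its top $n$-cells all lie in slices of dimension at most $n-1$. Indeed, right-multiplication by $b_\sigma^i \in {\bf A}_I(\mathcal{O}_S)$ preserves every slice (since ${\bf A}_I(\mathcal{O}_S)$ is a right factor in the definition of a slice), so translates of $\partial\sigma$ stay in their original slices; the new annular $n$-cells of $B_\sigma$ (built from $(n-1)$-cells of $\partial\sigma$ crossed with intervals in ${\bf A}_I(\mathcal{O}_S)$) lie in $(n-1)$-slices; and the cells of the Euclidean cap lie in 0-slices. Iterating, $\Sigma \to \Sigma' \to \Sigma'' \to \cdots$, the slice-dimension of top cells drops by at least one per step, so after at most $n$ iterations the resulting sphere $\Sigma^{(n)}$ has top cells all lying in the 0-slice ${\bf A}_I(\mathcal{O}_S)$. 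Since ${\bf A}_I(\mathcal{O}_S)$ is quasi-isometric to Euclidean space, $\Sigma^{(n)}$ bounds an $(n+1)$-ball $D^{\text{Eucl}}$ of polynomial volume inside ${\bf A}_I(\mathcal{O}_S)$, and the final filling $D$ is assembled from $D^{\text{Eucl}}$ and the intermediate regions accumulated across iterations. Each iteration contributes volume $O(\text{vol}(\text{current sphere}))$, and the sphere's volume stays polynomial through the iterations, so $\text{vol}(D) = O(\text{vol}(\Sigma)^m)$ for some $m = m(n)$.

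The main obstacle is compatibility of push-down constructions across adjacent top cells: when $\sigma, \sigma'$ share an $(n-1)$-face $\tau$ but lie in $n$-slices of different types $S_\sigma \neq S_{\sigma'}$, their contracting elements $b_\sigma, b_{\sigma'}$ differ, and the respective push-downs of $\tau$ disagree. Since $\tau$ already lies in the smaller $(n-1)$-slice of type $S_\sigma \cap S_{\sigma'}$, I plan to resolve this by coordinating the $b_\sigma$'s to agree on the common $S_\sigma \cap S_{\sigma'}$-directions (leveraging Lemma~\ref{l:tproj} on nested slice types), or by inserting transitional $(n+1)$-cells along shared $(n-1)$-faces that bridge any mismatch; these transitional cells contribute only polynomially to the total volume. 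Verifying that the induction/iteration does not blow up the scale beyond a uniform $r'$, and that the combinatorial complexity of each annular piece is uniformly bounded in $\text{vol}(\Sigma)$, are the technical checkpoints that make the volume estimate go through.
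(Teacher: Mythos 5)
Your plan is genuinely different from the paper's, which never iterates per-cell push-downs: instead it fixes a single place $w\in S$, projects the whole sphere at once to the $(|S|-1)$-slice via $\pi$, fills $\pi(\Sigma)$ there by Corollary~\ref{c:sphere}, and then builds a ``coarse mapping cylinder'' between $\Sigma$ and $\pi(\Sigma)$ by a single induction on cell dimension: a $k$-cell $\sigma$, its projection $\pi(\sigma)$, and the fillings $D_{\tau_i}$ of its faces together bound a $k$-sphere inside a $(k+1)$-slice, which is filled using Corollary~\ref{c:sphere}. Because $\pi$ and the vertex paths $p_x$ are defined once and for all, adjacent cells share the same $D_\tau$ along common faces, so the pieces glue automatically; and the hypothesis $n\le |S|-2$ enters only as $k+1\le n+1\le|S|-1<|S|$.

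Your proposal has a gap that I do not see how to repair along the lines you suggest. The central claim driving the iteration --- that the slice-dimension of top cells drops by at least one per step --- holds only at the first step. After one push-down, the $n$-cells of $\Sigma'$ are indeed in slices of dimension at most $n-1$, but the $(n-1)$-cells of $\Sigma'$ (in particular the annular translates $\tau\cdot b_\sigma^i$) are still in $(n-1)$-slices, because right-multiplication by $b_\sigma^i\in{\bf A}_I(\mathcal{O}_S)$ preserves each slice. When you push down an $n$-cell $\sigma'$ of $\Sigma'$ (which sits in an $(n-1)$-slice), the annular $n$-cells of $B_{\sigma'}$ are products of the $(n-1)$-cells of $\partial\sigma'$ with intervals in ${\bf A}_I(\mathcal{O}_S)$; since some of those $(n-1)$-cells are themselves in $(n-1)$-slices, the resulting $n$-cells of $\Sigma''$ are again in $(n-1)$-slices. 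So the iteration stalls at slice-dimension $n-1$ and never reaches the 0-slice. The termination at $\Sigma^{(n)}\subseteq{\bf A}_I(\mathcal{O}_S)$ therefore does not follow.

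There is also the compatibility problem you flagged, which is real and not a minor technicality: the per-cell contracting elements $b_\sigma$ are genuinely incompatible across adjacent $n$-slices of different types, so $\partial(\bigcup_\sigma D_\sigma)$ is not literally $\Sigma\cup\Sigma'$, and the proposed fixes (coordinating $b_\sigma$'s via Lemma~\ref{l:tproj}, or inserting transitional cells) are not worked out and would need to be shown to add only polynomial volume without reintroducing the same incompatibility at a lower level. The paper's construction dissolves this by deriving all fillings from the single projection $\pi$ and the vertex paths $p_x$, which are globally consistent choices; the filling of a shared face $\tau$ is literally the same cell $D_\tau$ inside both adjacent $(k+1)$-slice fillings. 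If you want to salvage your approach, the natural fix is to replace the per-cell $b_\sigma$ with a single global projection onto one factor, at which point you essentially recover the paper's proof.
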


\begin{proof}

We may assume that $1 \in \Sigma$.

Fix $w \in S$ and let  $$\pi : U_{\Phi(I)^+}{\bf A}_I(\mathcal{O}_S) \rightarrow \Big[\prod_{v \in (S-w)}{\bf U}_{\Phi(I)^+}(K_v)\Big]{\bf A}_I(\mathcal{O}_S)$$ be the obvious projection onto the $(|S|-1)$-slice.

Let $$D' \se  \prod_{v \in (S-w)}{\bf U}_{\Phi(I)^+}(K_v){\bf A}_I(\mathcal{O}_S)$$ be the coarse polysimplicial $(n+1)$-ball  with $\partial D' =\pi(\Sigma)$ that is given by Corollary~\ref{c:sphere}.

If $x$ is a vertex in $\Sigma$, then by Corollary~\ref{c:path} we can connect $x$ with $\pi (x)$ with the coarse path of length $O(\text{vol}(\Sigma ))$ contained in the $1$-slice 
that is the coset of ${\bf U}_{\Phi(I)^+}(K_w){\bf A}_I(\mathcal{O}_S)$ containing $x$ and $\pi (x)$. Call this path $p_x$.

Given a coarse $1$-cell  $\sigma \se \Sigma$ with endpoints $x_1$ and $x_2$, notice that $$\sigma \cup \pi (\sigma) \cup p_{x_2} \cup p_{x_1}$$ is a coarse loop contained in a $2$-slice, since $\sigma$ is contained in a $1$-slice. Therefore, by Corollary~\ref{c:sphere} there is a coarse 
 polysimplicial 2-disk $D_\sigma$ contained in the $2$-slice whose volume is $O(\text{vol}(\Sigma)^2)$ and whose boundary is the loop above.

We continue by induction. If $\sigma \se \Sigma$ is a coarse $k$-cell with faces $\tau_1,...,\tau_m$, then 
$$\sigma \cup \pi (\sigma) \cup (\cup _{i=1}^m D_{\tau_i})$$ is a coarse polysimplicial $k$-sphere contained in a $(k+1)$-slice. Since $k \leq n$, we have $k+1 \leq  |S|-1$, and thus by Corollary~\ref{c:sphere}, there is a coarse polysimplicial $(k+1)$-disk  $D_\sigma$ contained in the $(k+1)$-slice whose volume is polynomial in $\text{vol}(\Sigma)$, and whose boundary is the above coarse polysimplicial $k$-sphere.

Let $\mathcal{C}$ be the set of all $n$-cells in $\Sigma $. Then 
$$D=D' \cup (\cup _{\sigma \in \mathcal{C}}D_\sigma)$$
satisfies the lemma. \end{proof}

\subsection{Proof of Proposition~\ref{p:solvable}} Subdivide $\Sigma$ to $\Sigma '$ using Lemma~\ref{l:sub}. Then $\Sigma '$ bounds a coarse polysimplicial ball $D$ by Lemma~\ref{l:foo}.

Since the Hausdorff distance between $\Sigma$ and $\Sigma '$ is bounded, we may use $D$ and the quasi-isometry from $G$ to its associated product of symmetric spaces and Euclidean buildings to realize the desired coarse $(n+1)$-ball in ${ U}_{\Phi (I)^+}{\bf A}_{I}(\mathcal{O}_S)$.

\subsection{Proof of Theorem~\ref{p:s}} The proposition follows immediately from Proposition~\ref{p:solvable}. Indeed, we may assume that ${\bf Q}={\bf P}_I$ for some proper subset $I \subsetneq \Delta$. Thus, ${\bf U_Q}={\bf U}_{\Phi (I)^+}$, any $K$-Levi subgroup of ${\bf P}_I$ is conjugate over ${\bf P}_I(K)$ to ${\bf M}_I{\bf A}_I$, and the maximal $K$-split torus of the center of ${\bf M}_I{\bf A}_I$ is ${\bf A}_I$. Furthermore, since ${\bf U}_{\Phi (I)^+}$ is unipotent, ${\bf U}_{\Phi (I)^+}(\mathcal{O}_S)$ is a cocompact lattice in, and thus is quasi-isometric to, ${ U}_{\Phi (I)^+}$.

\section{Filling manifolds in the boundaries of parabolic regions}\label{s:bound}

For a proper subset $I \subsetneq \Delta $, we let $A_I^+=A_I^+(1)$, $R_I=U_{\Phi(I)^+}{\bf M}_I(\mathcal{O}_S)A_I^+$, and $\partial R_I=U_{\Phi(I)^+}{\bf M}_I(\mathcal{O}_S)\partial A_I^+$.

In this section we use Proposition~\ref{p:solvable} to prove that a coarse manifold in the  boundary of a parabolic region has a polynomially efficient filling in the same boundary. The precise statement is given as

\begin{proposition}\label{p:bound}
There is some $m \in \mathbb{N}$, and given $r>0$, there is some $r'>0$  such that the following holds:

If  $n \leq |S|-1$ and $\Sigma \se R_I$ is an  $r$-coarse $n$-manifold whose volume and maximum distance from $1$ is bounded by $d>0$, and whose boundary components are contained in $ \partial R_I$, then there is  an $r'$-coarse $n$-manifold $\Sigma'\se \partial R_I$ of the same topological type as $\Sigma$, with $\partial \Sigma' =\partial \Sigma$ and $$\text{\emph{vol}}(\Sigma')= O(d ^m)$$
\end{proposition}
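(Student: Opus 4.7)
The strategy is to push $\Sigma$ into $\partial R_I$ by right-multiplying each vertex $p = uma$ of $\Sigma$ by an appropriate power of a contracting element $a_0 \in A_I$ with $|\alpha(a_0)| < 1$ for every $\alpha \in \Delta - I$, producing an intermediate coarse $n$-manifold $\bar\Sigma \subseteq \partial R_I$ of inflated scale; then to refine $\bar\Sigma$ back down to scale $r'$ cell-by-cell using Proposition~\ref{p:solvable}.

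For each vertex $p = uma$ of $\Sigma$, set $N(p)$ to be the least $k \geq 0$ with $a \cdot a_0^k \in \partial A_I^+$. Since $\mathrm{diam}(\Sigma) \leq d$ we have $N(p) = O(d)$; since adjacent vertices are within $r$ we have $|N(p) - N(q)| = O(1)$; and since $\partial\Sigma \subseteq \partial R_I$ we have $N \equiv 0$ on $\partial\Sigma$. Let $\bar\Sigma$ be the coarse $n$-manifold with the same combinatorial type as $\Sigma$ and vertex-map $\bar p := p \cdot a_0^{N(p)}$. Then $\bar\Sigma \subseteq \partial R_I$, $\bar\Sigma$ has the same topological type as $\Sigma$, and $\partial\bar\Sigma = \partial\Sigma$. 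For adjacent $p, q$ in $\Sigma$, left-invariance gives $d(\bar p, \bar q) = d\bigl(1,\, a_0^{-N(p)}\, p^{-1}q\, a_0^{N(q)}\bigr)$; because $A_I$ centralizes ${\bf M}_I$ and $A$, conjugation by $a_0^{N(p)}$ acts only on the $U_{\Phi(I)^+}$-component of $p^{-1}q$, multiplying the norm of its root-$\beta$ piece by $|\beta(a_0)|^{-N(p)}$; via the logarithmic norm-distance relation in $U_{\Phi(I)^+}$, this translates to an $O(r + N(p)) = O(d)$ distortion in $G$, so $\bar\Sigma$ has scale $O(d)$.

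To produce $\Sigma'$, I would refine $\bar\Sigma$ skeleton by skeleton. After an initial combinatorial subdivision of $\bar\Sigma$ ensuring each coarse cell lies in a single ${\bf M}_I(\mathcal{O}_S)$-coset (so that, after left-translating by $m^{-1}$, each cell lies in the solvable space $U_{\Phi(I)^+} \cdot {\bf A}_I(\mathcal{O}_S)$), I refine dimension by dimension. Having refined the $(k-1)$-skeleton to scale $r'$ and polynomial volume, each coarse $k$-cell has a refined $(k-1)$-sphere boundary, which we fill by a coarse $k$-ball of scale $r'$ and polynomial volume via Proposition~\ref{p:solvable}. This invocation is legal precisely when $k-1 \leq |S|-2$, i.e.\ when $k \leq n \leq |S|-1$, matching our hypothesis. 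Assembling the refined cells yields $\Sigma' \subseteq \partial R_I$ of the correct topological type, with $\partial\Sigma' = \partial\Sigma$ and $\mathrm{vol}(\Sigma') = O(d^m)$ for an appropriate $m$.

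The main obstacle is ensuring that the fillings produced by Proposition~\ref{p:solvable} remain in $U_{\Phi(I)^+} \cdot \partial A_I^+$ rather than wandering into the interior of $A_I^+$, so that $\Sigma' \subseteq \partial R_I$. The key observation is that the contracting translations used in the proof of Proposition~\ref{p:solvable} (Lemma~\ref{l:slicefill}) involve elements $b \in {\bf A}_I(\mathcal{O}_S)$ whose \emph{global} norm $|\alpha(b)| = \prod_v |\alpha(b)|_v$ equals $1$ by the product formula, so these translations preserve the locus $\partial A_I^+$ (defined by the global norm equations). The subsequent Euclidean filling can then be restricted to $\partial A_I^+ \cap {\bf A}_I(\mathcal{O}_S)$, a union of star-shaped walls, where cone constructions give fillings of polynomial volume. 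A secondary concern --- compatibility of refinements across shared $(k-1)$-faces of adjacent $k$-cells of $\bar\Sigma$ --- is handled by refining the entire $(k-1)$-skeleton before filling any $k$-cell, so that each $k$-cell receives a single unambiguous refined boundary.
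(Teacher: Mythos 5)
Your proposal is correct in its essentials, but it takes a genuinely different route from the paper's argument, so a comparison is worthwhile.

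The paper does not push $\Sigma$ into $\partial R_I$ by right-multiplying vertices by powers of a contracting $A_I$-element. Instead it composes the quotient $R_I \to {\bf M}_I(\mathcal{O}_S)A_I^+$ with a Lipschitz retraction $A_I^+ \to \partial A_I^+$ to get a Lipschitz map $q_I \colon R_I \to \partial R_I$; this immediately gives $q_I(\Sigma) \subseteq \partial R_I$ at a \emph{fixed} scale. The cost is that $q_I$ moves $\partial\Sigma$, so the paper's real work is building a coarse cylinder $X$ (a ``polynomial homotopy'') between $\partial\Sigma$ and $q_I(\partial\Sigma)$, by inducting on the dimension of simplices of $\partial\Sigma$ and filling each resulting sphere with Proposition~\ref{p:solvable} inside a translated coset $m_i a_i U_{\Phi(I)^+}{\bf A}_I(\mathcal{O}_S)B_k$. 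The final answer is $\Sigma' = q_I(\Sigma) \cup X$. Your construction instead produces $\bar\Sigma$ with $\partial\bar\Sigma = \partial\Sigma$ right away (so no cylinder is needed), but at the cost of inflating the scale to $O(d)$, which you then repair by a skeleton-by-skeleton refinement. Both arguments ultimately rest on the same two facts: the slice-filling machinery of Proposition~\ref{p:solvable}/Corollary~\ref{c:path}, and the observation that ${\bf A}_I(\mathcal{O}_S)$ lies in the wall $\{|\alpha(\cdot)| = 1\}$ by the product formula, so that $m a U_{\Phi(I)^+}{\bf A}_I(\mathcal{O}_S)$ stays in $\partial R_I$ whenever $m \in {\bf M}_I(\mathcal{O}_S)$ and $a \in \partial A_I^+$. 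That last observation (which you state explicitly) is exactly the hidden step in the paper's inclusion chain $m_i a_i f_i \subseteq U_{\Phi(I)^+} m_i \partial A_I^+$.

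A few points of your write-up need tightening before this is a complete proof, though none is fatal. First, the ``least $k$ with $a a_0^k \in \partial A_I^+$'' may not exist because the values $|\alpha(a a_0^k)|$ descend in geometric steps and can jump past the wall; you should either allow a bounded overshoot and correct by a bounded element of $A_I$, or replace your discrete pushing map with a Lipschitz retraction of $A_I^+$ onto $\partial A_I^+$ as in the paper (which also sidesteps the $O(d)$ scale inflation entirely). Second, Proposition~\ref{p:solvable} is stated for $0 < n \leq |S|-2$, so the base case of your refinement (connecting two endpoints of a 1-cell of $\bar\Sigma$ at distance $O(d)$ by a fixed-scale path) needs Corollary~\ref{c:path} or a direct two-slice concatenation, not the proposition itself. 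Third, the paragraph about ``restricting the Euclidean filling to $\partial A_I^+ \cap {\bf A}_I(\mathcal{O}_S)$'' by reopening the proof of Lemma~\ref{l:slicefill} is unnecessary: $\partial A_I^+ \cap {\bf A}_I(\mathcal{O}_S) = {\bf A}_I(\mathcal{O}_S)$ (the product formula makes \emph{every} $S$-unit lie in the wall), so the statement of Proposition~\ref{p:solvable} already produces fillings inside $\partial R_I$ up to a bounded translate, with no modification to its proof required.
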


\begin{proof} 

The quotient map $$ R_I \rightarrow {\bf{M}}_I(\mathcal{O}_S)  A_I^+ $$ is distance nonincreasing.

Choose a Lipschitz map $ A_I^+ \rightarrow \partial A_I^+$ that is the identity on $\partial A_I^+$. Because ${\bf M}_I$ and $ {\bf A}_I$ commute, ${\bf{M}}_I(\mathcal{O}_S)A_I^+$ is a metric direct product, so the induced map 
$$ {\bf{M}}_I(\mathcal{O}_S)  A_I^+ \rightarrow  {\bf{M}}_I(\mathcal{O}_S) \partial A_I^+$$ is Lipschitz.

We define $q_I$ to be the composition of maps $$ R_I \rightarrow {\bf{M}}_I(\mathcal{O}_S)  A_I^+ \rightarrow {\bf{M}}_I(\mathcal{O}_S) \partial A_I^+$$ so that $q_I$ is Lipschitz, and takes values in $\partial R_I$.

Suppose $x_i \in \partial \Sigma $. Then $x_i=u_im_ia_i$ for some $u_i \in U_{\Phi(I)^+}$, $m_i \in {\bf{M}}_I(\mathcal{O}_S)$, and $a_i  \in \partial A_I^+$.

Given $i$, choose some $w \in S$, and let $u_w \in {\bf U}_{\Phi(I)^+}(K_w)$ and $u_{-w} \in \prod_{v\in S-w}{\bf U}_{\Phi(I)^+}(K_v)$ be such that $u_wu_{-w}= (m_ia_i)^{-1}u_im_ia_i$. 

By Corollary~\ref{c:path}, there is a course path $f_w \se {\bf U}_{\Phi(I)^+}(K_w) {\bf A}_I(\mathcal{O}_S)$ of length $O(d)$ whose endpoints are 1 and $u_w$. By the same corollary,  there is a coarse path $f_{-w} \se \big[ \prod_{v\in S-w}{\bf U}_{\Phi(I)^+}(K_v)\big] {\bf A}_I(\mathcal{O}_S)$ of length $O(d)$ whose endpoints are 1 and $u_{-w}$. We let  $f_i \se U_{\Phi(I)^+} \, {\bf A}_I(\mathcal{O}_S) $ be the union of $f_w$ and $u_{w}f_{-w}$, so that $f_i$ is a coarse path from $1$ to $(m_ia_i)^{-1}u_im_ia_i$ whose length is $O(d)$.

 Thus, 
\begin{align*} m_ia_if_i  & \se m_ia_i U_{\Phi(I)^+} \, {\bf A}_I(\mathcal{O}_S) \\
& \se U_{\Phi(I)^+} \, m_i a_i\, {\bf A}_I(\mathcal{O}_S)  \\
& \se U_{\Phi(I)^+} \, m_i \, \partial A_I^+\\
&  \se     U_{\Phi(I)^+} \,  {\bf M}_I(\mathcal{O}_S) \, \partial A_I^+ \\
& \se   \partial R_I  \end{align*}
 is a path of length $O(d)$ that connects $m_ia_i=q_I(x_i)$ to $u_im_ia_i=x_i$. We name this path $D(x_i)$.

\bigskip

In what remains of this proof, we will denote a coarse $k$-simplex in $\partial \Sigma$ by the $(k+1)$-tuple of its vertices.

\bigskip

We claim that for any coarse simplex $(x_1,...,x_k)$ in $\partial \Sigma$, there is a coarse $k$-disk $D  (x_1,...,x_k) $ such that 

\begin{quote}

\bigskip \noindent ($i$) $D  (x_1,...,x_k) \se m_{i} a_{i} U_{\Phi(I)^+} \, {\bf A}_I(\mathcal{O}_S) B_k$ for any $1\leq i \leq k$ and some compact set  $B_k \se G$ of radius depending on $k$ with $1 \in B_k$.

\bigskip \noindent ($ii$) $\partial D  (x_1,...,x_k)$ is the union of $ (x_1,...,x_k)$,  $q_I (x_1,...,x_k)$, and $ \cup _{i=1}^kD  (x_1,..., \widehat{x_i} ,...,x_k)$ where $(x_1,..., \widehat{x_i} ,...,x_k)$ denotes the simplex obtained by removing the vertex $x_i$ from the simplex $(x_1,...,x_k)$.

\bigskip \noindent ($iii$) The volume of $D(x_1,...,x_k)$ is $O(d^m)$ for some $m$ depending on $k$.

\end{quote}

We prove our claim by induction on $k$. The case when $k=1$ is resolved, so we assume our claim is true for $k$ and assume that $(x_1,...,x_{k+1})$ is a coarse simplex in $ \partial \Sigma$.

Let $s(x_1,\ldots , x_{k+1})$ be the union of $(x_1,...,x_{k+1})$,  $q_I(x_1,...,x_{k+1})$, and $    \cup _{i=1}^{k+1}D  (x_1,..., \widehat{x_i} ,...,x_{k+1})$. By the induction hypothesis,
$s(x_1,\ldots , x_{k+1})$ is a coarse sphere of dimension $k \leq |S|-2$.

If $B \se G$ is the ball of radius $r$ around $1$, and $1 \leq i,j \leq k+1$, then $(m_ia_i)^{-1}m_ja_j \in B$ by our assumption on the scale of $\Sigma$.

Recall that $m_i, m_j \in M_I$, so they commute with ${\bf A}_I(\mathcal{O}_S)$ and normalize $U_{\Phi(I)^+}$, as do $a_i$ and $a_j$. Thus,
\begin{align*}
  m_ja_j U_{\Phi(I)^+} \, {\bf A}_I(\mathcal{O}_S) B_k
& = m_ia_i U_{\Phi(I)^+} \, {\bf A}_I(\mathcal{O}_S) (m_ia_i)^{-1}m_ja_j B_k\\
& \se m_ia_i U_{\Phi(I)^+} \, {\bf A}_I(\mathcal{O}_S) B B_k
\end{align*}
Therefore, we let $B_{k+1}=BB_{k}$ so that $s(x_1,\ldots , x_{k+1})$ is contained in $m_{i} a_{i} U_{\Phi(I)^+} \, {\bf A}_I(\mathcal{O}_S) B_{k+1}$ for any $1\leq i \leq k+1$.

By Proposition~\ref{p:solvable}, there is a coarse $(k+1)$-ball in $m_{1} a_{1} U_{\Phi(I)^+} \, {\bf A}_I(\mathcal{O}_S) B_{k+1}$ whose boundary is $s(x_1,\ldots , x_{k+1})$, and whose volume is bounded by a polynomial in the volume of $s(x_1,\ldots , x_{k+1})$. We name this ball $ D  (x_1 ,...,x_{k+1})$. This justifies our claim.

\bigskip

Now let $X$ be the union of $D (x_1,...,x_{n} )$ taken over maximal simplices $ (x_1,...,x_{n} )$ in  $ \partial \Sigma$. Then $X$ is a coarse $n$-manifold in 
\begin{align*}{\bf M}_I(\mathcal{O}_S) (\partial A_I^+) U_{\Phi(I)^+} \, {\bf A}_I(\mathcal{O}_S)B_{n} \, & \se \, 
 U_{\Phi(I)^+} \, {\bf M}_I(\mathcal{O}_S)  \partial A_I^+B_{n} \, \\ & \se \, \partial R_I B_{n}\end{align*}

The volume of $X$ is $O(d^m)$ for some $m$, and the boundary of $X$ is $\partial \Sigma \cup q_I(\partial \Sigma)$. Notice that $X$ establishes something like a ``polynomial homotopy" between $q_I$ restricted to $\partial \Sigma$ and the identity map restricted to $\partial \Sigma$.

 We let $\Sigma '$ be the image of  $q_I(\Sigma) \cup X$ under the obvious rough isometry $R_I B_{n} \rightarrow R_I$.

\end{proof}

\section{Proof of Main Result (Theorem~\ref{t:p})}\label{s:proof}

Let $B_0$ be as in Proposition~\ref{p:prune} and 
let $\Sigma \se G =  \bigcup _{{\bf Q} \in \mathcal{P}(0)}R_{\bf Q}B_0  $ be a coarse $n$-manifold of scale $r_0$ with $\partial \Sigma \se {\bf G}(\mathcal{O}_S)$ and $n < |S|$. 

We relabel $\Sigma$ as $\Sigma _0$, and we let $N_0=2r_0$. Then let $B_1$ and $t_0$ be as in Proposition~\ref{p:prune}.

For ${\bf Q} \in \mathcal{P}(0)$, we define the coarse manifolds
$$\Sigma _{0, {\bf Q}} =  \Sigma _0 \cap  R_{\bf Q}(t_0)B_0 $$
and 
 $$ \Sigma _{0, \partial} =  \Sigma _0 - \bigcup _{{\bf Q} \in \mathcal{P}(0)}\Sigma _{0,{\bf Q}}$$ 

Notice that $\Sigma _{0, {\bf Q}}  \cap \Sigma _{0, {\bf Q'}} =\emptyset$ if ${\bf Q} \neq {\bf Q'}$ by part $(ii)$ of Proposition~\ref{p:prune}.

For each ${\bf Q} \in \mathcal{P}(0)$ we may perturb the points in $\Sigma _{0, {\bf Q}}$ by distance at most $r_0$ such that $\partial \Sigma _{0, {\bf Q}}  \se \partial R_{\bf Q}(t_0)B_0$, the latter set being quasi-isometric (with constants independent of  $\bf Q$) to $\partial R_\emptyset$ by Proposition~\ref{p:prune}$(vi)$.

By Proposition~\ref{p:bound}, there is some $r_1 > 0$ (that depends  on the constants of the above quasi-isometry) and  a coarse manifold $\Sigma _{0, {\bf Q}}' \se  \partial R_{\bf Q}(t_0)B_0$ of scale $r_1$ for each ${\bf Q} \in \mathcal{P}(0)$ such that 
the coarse manifold \begin{align*} \Sigma _1 & =  \Sigma _{0, \partial} \cup  \bigcup _{{\bf Q} \in \mathcal{P}(0)}\Sigma _{0,{\bf Q}}' \\ \end{align*}
is a coarse manifold of scale $r_1$, of the same topological type as $\Sigma _0$, and whose volume is $O(\text{vol}(\Sigma)^k)$ for some $k$. 

Also note that by Proposition~\ref{p:prune}$(iv)$
 \begin{align*} \Sigma _1 & \, \se \,  \big( \bigcup _{{\bf Q} \in \mathcal{P}(0)} R_{\bf Q}B_0 \big) - \big( \bigcup _{{\bf Q} \in \mathcal{P}(0)} R_{\bf Q}(2t_0)B_0 \big) \\
& \, \se \,  \bigcup _{{\bf Q} \in \mathcal{P}(1)}R_{\bf Q}B_1 \end{align*}

Furthermore, since ${\bf G}(\mathcal{O}_S) \cap R_{\bf Q}(t_0)B_0 = \emptyset $ by Proposition~\ref{p:prune}$(iii)$, we have that $\partial \Sigma _0 = \partial \Sigma_1$.

Repeat this argument with $1\leq n < |\Delta|-1$ in the place of $0$ above. The result is a coarse manifold $ \Sigma _ {|\Delta|}$ that is of scale $r_{|\Delta|}>0$ (for some $r_{|\Delta|}>0$); that is of the same topological type as $\Sigma _0$; with $\partial \Sigma_{|\Delta|} = \partial \Sigma_0$; whose volume is $O(\text{vol}(\Sigma)^k)$ for some $k$;
 and that is contained in  $$  
 \big( \bigcup _{{\bf Q} \in \mathcal{P}( {|\Delta|-1})} R_{\bf Q}B_ {|\Delta|-1} \big) - \big( \bigcup _{{\bf Q} \in \mathcal{P}( {|\Delta|-1})} R_{\bf Q}(2t_ {|\Delta|-1})B_ {|\Delta|-1} \big) $$ and hence, is contained in $ {\bf G}(\mathcal{O}_S)B_ {|\Delta|}$ by Proposition~\ref{p:prune}$(v)$.

As ${\bf G}(\mathcal{O}_S)B_ {|\Delta|}$ is a finite Hausdorff distance from ${\bf G}(\mathcal{O}_S)$, our proof is complete.

\section{Isoperimetric inequalities}\label{s:finisht}
In this section, we prove that our main result implies Corollary~\ref{c:isop}. That is, we show that 
 ${\bf G} (\mathcal{O}_S)$  satisfies a polynomial $m$-dimensional isoperimetric inequality when $m \leq |S|-2$.

Let $X$ be an $(|S|-2)$-connected CW-complex that ${\bf G} (\mathcal{O}_S)$ acts on, cellularly, freely, properly, and cocompactly. Choose a basepoint $x \in X$ and let $\phi :{\bf G} (\mathcal{O}_S) \rightarrow {\bf G} (\mathcal{O}_S)\cdot x$ be the orbit map. It is a bijective quasi-isometry where ${\bf G} (\mathcal{O}_S)$ is endowed with the restriction of the left-invariant metric on ${\bf G} (\mathcal{O}_S)$, and  ${\bf G} (\mathcal{O}_S)\cdot x$ is endowed with the restriction of the path metric on $X$.

Let $\Sigma \se X$ be a cellular $m$-sphere for $m\leq |S|-2$. Every point in $\Sigma$ is a uniformly bounded distance from a point in  the orbit ${\bf G} (\mathcal{O}_S)\cdot x$. Thus, there exists some $r_0>0$ such that  after perturbing $\Sigma$ by a uniformly bounded amount, the Hausdorff distance between $\Sigma$ and $\Sigma \cap {\bf G} (\mathcal{O}_S)\cdot x$ is uniformly bounded and $\Sigma \cap {\bf G} (\mathcal{O}_S)\cdot x$ is an $r_0$-coarse $m$-sphere.

Therefore, $\phi^{-1}(\Sigma \cap {\bf G} (\mathcal{O}_S)\cdot x)$ is an $r_1$-coarse $m$-sphere in ${\bf G} (\mathcal{O}_S)$ for some $r_1>0$ that depends only on $r_0$ and the quasi-isometry constants of $\phi$.

Since $G$ is quasi-isometric to a CAT(0) space, there is an $r_1$-coarse $(m+1)$-disk $D \se G$ with $\partial D = \phi^{-1}(\Sigma \cap {\bf G} (\mathcal{O}_S)\cdot x)$ and with $\text{vol}(D)=O(\text{vol}(\Sigma)^{\frac{m+1}{m}})$.

By Theorem~\ref{t:p}, there is some $r_2>0$ and a polynomial $f$ such that there exists an $r_2$-coarse $(m+1)$-disk $D' \se {\bf G} (\mathcal{O}_S)$ with $\partial D' = \phi^{-1}(\Sigma \cap {\bf G} (\mathcal{O}_S)\cdot x)$ and  $\text{vol}(D')=f(\text{vol}(\Sigma)^{\frac{m+1}{m}})$.

There is some $r_3>0$ depending only on $r_2$ and the quasi-isometry constants of $\phi$ such that $\phi(D') \se X$ is an $r_3$-coarse $(m+1)$-disk with boundary $\Sigma \cap {\bf G} (\mathcal{O}_S)\cdot x$ and $\text{vol}(\phi(D'))=\text{vol}(D')$.

Starting with the 0-skeleton given by $\phi(D')$, we connect adjacent vertices in the coarse manifold $\phi(D')$ with 1-cells. If the two adjacent points are contained in $\Sigma \cap {\bf G} (\mathcal{O}_S)\cdot x$, then we use the 1-cell that connects them in $\Sigma$. We continue by the dimension of the skeleton to define a 
 topological $(m+1)$-ball $D'' \se X$ that is a uniformly bounded Hausdorff distance from $\phi(D')$, whose boundary is $\Sigma$, and that contains $O(\text{vol}(D'))$ many cells. This proves Corollary~\ref{c:isop}.

Notice that if $f$ were a linear polynomial, the proof above establishes a Euclidean isoperimetric inequality, and thus that Conjecture~\ref{c:p} implies Conjecture~\ref{c:iso}.

\section{Appendix: Reduction Theory and a Proof of Proposition~\ref{p:prune}}\label{s:a}

In this Section we provide a proof of Proposition~\ref{p:prune}. 

Throughout this section, $F$ will be the set given in Theorem~\ref{t:finitecusps}. We will also make use of the notation introduced in Section~\ref{s:p}.

\bigskip

We begin with the main result from reduction theory:

\begin{theorem}\label{t:red} There is a bounded set $B \se G$  such that $${\bf G}(\mathcal{O}_S) F U M A_\emptyset ^+ B =G$$
\end{theorem}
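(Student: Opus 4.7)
The plan is to derive Theorem~\ref{t:red} from the classical Borel--Harish-Chandra--Behr--Harder reduction theory by combining the finiteness of cusps (Theorem~\ref{t:finitecusps}) with an Iwasawa-type decomposition at each place and a height-minimization argument on the maximal $K$-split torus. By Theorem~\ref{t:finitecusps}, the finite set $F$ meets every ${\bf G}(\mathcal{O}_S)$-orbit of minimal $K$-parabolic subgroups, so it suffices to show that after possibly translating by an element of ${\bf G}(\mathcal{O}_S) F$, every $g \in G$ lies in a Siegel-type set of the form $U M A_\emptyset^+ B$ for some uniformly bounded $B \se G$.

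The first step is to fix, at each $v \in S$, a good maximal compact subgroup $K_v \le {\bf G}(K_v)$ --- the stabilizer of a special vertex in the Bruhat--Tits building when $v$ is non-archimedean, a standard maximal compact when $v$ is archimedean --- producing an Iwasawa decomposition ${\bf G}(K_v) = {\bf U}_{\Phi^+}(K_v) {\bf M}(K_v) {\bf A}(K_v) K_v$. Taking the product over $S$ gives $G = U M A_\emptyset \widetilde{K}$ with $\widetilde{K} = \prod_{v \in S} K_v$ compact. Writing any $g \in G$ as $g = u m a k$, the task reduces to replacing $a \in A_\emptyset$ by an element of $A_\emptyset^+$ modulo a uniformly bounded error, after a suitable ${\bf G}(\mathcal{O}_S) F$-translation. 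This is accomplished by the classical reduction argument: among all ${\bf G}(\mathcal{O}_S) F$-translates of $g$, one selects one minimizing a height function built from $\prod_v |\alpha(\cdot)|_v$ for $\alpha \in \Delta$, and a standard exchange argument using the Bruhat decomposition and root-group commutation relations forces the minimizing representative to satisfy $|\alpha(a')| \ge 1$ for every $\alpha \in \Delta$, i.e.\ $a' \in A_\emptyset^+$. The reason $F$ enters is that different cosets of $F$ conjugate the positive chamber into different Weyl chambers, so varying over $F$ is what allows the minimization to end in the chosen chamber $A_\emptyset^+$. The bounded set $B$ then absorbs (i) the compact factor $\widetilde{K}$ from the Iwasawa decomposition, (ii) fixed compact fundamental domains for the cocompact lattices ${\bf U}_{\Phi^+}(\mathcal{O}_S) \le U$ (cocompact since $ {\bf U}_{\Phi^+}$ is unipotent) and for ${\bf M}(\mathcal{O}_S)$ in $M$ (cocompact since the center of ${\bf M}$ is $K$-anisotropic), and (iii) the bounded slippage from the height-minimization step.

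The main obstacle is carrying out the $A$-reduction step uniformly in the characteristic of $K$. In the number-field case this is Borel's classical Minkowski-style argument on $A_\emptyset/{\bf A}(\mathcal{O}_S)$; in the function-field case one instead invokes Harder's reduction theory, which replaces the Minkowski geometry by an argument inside the Bruhat--Tits building (using Korollar~2.2.7 of \cite{H}, as in the proof of Theorem~\ref{t:finitecusps}). The unified way to state both is via the product absolute value $|\alpha(a)| = \prod_v |\alpha(a_v)|_v$ on $A_\emptyset$: the product formula makes ${\bf A}(\mathcal{O}_S)$ into a discrete cocompact subgroup of the kernel of the height map $A_\emptyset \to \bigoplus_{\alpha \in \Delta} \RR$ (a Dirichlet-unit-type theorem for the split torus), and this is precisely what powers the minimization argument in both settings simultaneously.
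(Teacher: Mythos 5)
Your proposal takes a genuinely different route from the paper, which proves Theorem~\ref{t:red} entirely by citation: Springer's adelic reduction theorem (2.1F of \cite{S}) supplies an adelic Siegel-set covering, and Godement's Theorem~11 in \cite{Go} is invoked for the standard deduction of the $S$-arithmetic statement from the adelic one via Theorem~\ref{t:finitecusps}. You instead sketch a from-scratch classical argument --- Iwasawa decomposition at each place, height minimization over ${\bf G}(\mathcal{O}_S)F$-translates, and a Bruhat/root-group exchange argument to push the torus component into the positive chamber. Both routes hinge on Theorem~\ref{t:finitecusps} and yield the same Siegel-type covering; yours makes the machinery visible, while the paper's is immediately rigorous by appeal to the literature. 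The cost of your route is that the exchange step and the existence of a height minimizer are left as a gesture, and those are exactly the places where the number-field (Minkowski-style) and function-field (Bruhat--Tits, Harder-style) arguments diverge; your sketch acknowledges the split but does not actually unify it. Two small inaccuracies are also worth flagging: the set $F$ indexes the finitely many ${\bf G}(\mathcal{O}_S)$-orbits of minimal $K$-parabolic subgroups (the cusps), not different Weyl chambers of a fixed torus --- landing in $A_\emptyset^+$ is already handled by the Bruhat/Weyl part of the exchange argument within a single cusp --- and Harder's Korollar~2.2.7 is used in this paper only to repair Behr's proof of Theorem~\ref{t:finitecusps}, not as the engine driving the coarse covering statement.
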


\begin{proof}

Springer 2.1F \cite{S} provides an adelic version of this theorem. See Godement Theorem 11 \cite{Go} for the proof that the adelic version implies this theorem using Theorem~\ref{t:finitecusps}. (Theorem 11 in \cite{Go} is stated for number fields, but the proof works for an arbitrary global field.) 

\end{proof}

\subsection{Root choice} We will need to make use of a carefully chosen positive root. We explain that choice below along with some related notation.

Given $w \in W$ and $\alpha \in \Phi$, we let $\alpha ^w \in \Phi$ be defined by $\alpha ^w(a)=\alpha (w^{-1}aw)$. Then we define $\Delta ^w=\{\,\alpha ^w \mid \alpha \in \Delta \,\}$, and we let  $(\Phi ^w)^+$ and $(\Phi ^w)^-$ be the set of positive and negative roots respectively with respect to the simple roots $\Delta ^w$.

The sets of roots $\Phi^+$, $(\Phi^w)^+$, and $(\Phi^w)^-$ are closed under addition, so $\Phi^+\cap (\Phi^w)^+$ and $\Phi^+\cap (\Phi^w)^-$ are as well. Hence, there are corresponding unipotent subgroups ${\bf {U}}_{\Phi^+\cap (\Phi^w)^+}, {\bf {U}}_{\Phi^+\cap (\Phi^w)^-} \leq {\bf U}_{\Phi^+}$ that we label as ${ \bf U}_{w,+}$ and ${ \bf U}_{w,-}$ respectively. 
They are each normalized by $\bf MA \leq Z_G(A)$ (see e.g. 21.9(ii) \cite{Boreltext}).

In the case that $w \neq 1$, we have that ${\Phi^+\cap (\Phi^w)^-} \neq \emptyset$, and the next lemma determines a choice of root $\tau _{J,I,w} \in {\Phi^+\cap (\Phi^w)^-}$ that satisfies some properties we'll need to make use of later in this section. As an example of the following lemma, if $w$ represents the longest element of the Weyl group then ${\Phi^+\cap (\Phi^w)^-}={\Phi^+}$ and $\tau _{J,I,w}$ is the highest root with respect to $\Phi ^+$.

\begin{lemma}\label{l:root1}
Suppose $I,J \subsetneq \Delta$, that $w \in W$, and $w \notin {\bf P}_J(K)$. Then there is some $\tau _{J,I,w} \in {\Phi^+\cap (\Phi^w)^-}$ such that
\begin{quote}
$(i)$  if $\alpha \in {\Phi^+\cap (\Phi^w)^-}$ then $\alpha + \tau _{J,I,w} \notin \Phi$;
\medskip 

\noindent $(ii)$   if \mbox{$a \in \,A^+_I(1)$} then $ |\tau _{J,I,w}(a)|\geq 1$; and 
\medskip 

\noindent $(iii)$ for any $r >0$ there is some $t >1$ such that \mbox{$a \in \,^wA^+_J(t)$} implies $ |\tau _{J,I,w}(a)|<r$.
\end{quote}
\end{lemma}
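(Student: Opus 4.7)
The plan is to first translate the group-theoretic hypothesis $w \notin {\bf P}_J(K)$ into a combinatorial statement about the Weyl action on $\Phi$, then choose $\tau_{J,I,w}$ as a maximal element of a carefully chosen subset of $\Phi^+ \cap (\Phi^w)^-$. Under the identification of the Weyl group with $N_{\bf G}({\bf A})/{\bf Z_G}({\bf A})$, the subgroup $W \cap {\bf P}_J(K)$ corresponds to the parabolic Weyl subgroup $W_J$ generated by reflections $s_\alpha$ for $\alpha \in J$, which is the Weyl group of the subsystem $[J]$. So $w \notin {\bf P}_J(K)$ is equivalent to $w \notin W_J$, and one has the characterization: $w \in W_J$ if and only if $w(\Phi(J)^+) \subseteq \Phi^+$. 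The forward direction is immediate because each $s_\alpha$ with $\alpha \in J$ permutes $\Phi(J)^+$; the reverse direction is the standard Coxeter-theoretic fact that $w \in W_J$ iff the inversion set $\Phi^+ \cap (\Phi^w)^-$ is contained in $[J]^+$. Consequently, $w \notin W_J$ produces some $\beta \in \Phi(J)^+$ with $w\beta \in \Phi^-$, giving a ``seed" root $\tau_0 := -w\beta$ that lies in $\Phi^+ \cap (\Phi^w)^-$ and satisfies $-\tau_0^{w^{-1}} = \beta \in \Phi^+ \setminus [J]$.

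I would then take $\tau_{J,I,w}$ to be a maximal element of the finite nonempty set
\[ T := \{\tau \in \Phi^+ \cap (\Phi^w)^- : -\tau^{w^{-1}} \notin [J]\}, \]
where maximality is with respect to the standard partial order ($\mu \leq \nu$ iff $\nu - \mu$ is a nonnegative integer combination of simple roots). Property $(ii)$ is automatic for any $\tau \in \Phi^+$: writing $\tau = \sum_{\alpha \in \Delta} c_\alpha \alpha$ with $c_\alpha \geq 0$, the conditions $|\alpha(a)| = 1$ for $\alpha \in I$ and $|\alpha(a)| \geq 1$ for $\alpha \in \Delta - I$ on $A_I^+(1)$ give $|\tau(a)| \geq 1$. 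Property $(iii)$ follows by direct computation: for $a = wa'w^{-1}$ with $a' \in A_J^+(t)$, one has $|\tau_{J,I,w}(a)| = |\beta(a')|^{-1} \leq t^{-N}$, where $\beta := -\tau_{J,I,w}^{w^{-1}} \in \Phi^+$ and $N$ is the sum of the simple-root coefficients of $\beta$ supported on $\Delta - J$; since $\beta \notin [J]$, $N > 0$, so $|\tau_{J,I,w}(a)| < r$ once $t$ is sufficiently large.

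The main obstacle is property $(i)$, and the correct choice of the maximizing set $T$ (rather than the larger set $\Phi^+ \cap (\Phi^w)^-$) is exactly what makes this go through. Suppose for contradiction that $\alpha + \tau_{J,I,w} \in \Phi$ for some $\alpha \in \Phi^+ \cap (\Phi^w)^-$. Then $\alpha + \tau_{J,I,w}$ is positive, and $(\alpha + \tau_{J,I,w})^{w^{-1}} = \alpha^{w^{-1}} + \tau_{J,I,w}^{w^{-1}}$ is a negative root (a root which is a sum of two negatives), so $\alpha + \tau_{J,I,w}$ again lies in $\Phi^+ \cap (\Phi^w)^-$. The delicate step is checking that it remains in $T$: writing $-(\alpha + \tau_{J,I,w})^{w^{-1}} = \beta' + \beta$ with $\beta = -\tau_{J,I,w}^{w^{-1}}$ and $\beta' := -\alpha^{w^{-1}} \in \Phi^+$, I note that $\beta$ has a positive coefficient on some $\alpha' \in \Delta - J$ (since $\tau_{J,I,w} \in T$), whereas $\beta'$ has nonnegative coefficients on every simple root. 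Hence the sum still has a positive coefficient on $\alpha'$ and thus lies outside $[J]$, so $\alpha + \tau_{J,I,w} \in T$ while strictly exceeding $\tau_{J,I,w}$, contradicting maximality.
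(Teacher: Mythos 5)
Your proof is correct, and it takes a genuinely different path from the one in the paper, though the two land on the same underlying structure.

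Both arguments reduce to two things: producing a ``seed'' root $\tau_0 \in \Phi^+ \cap (\Phi^w)^-$ with $-\tau_0^{w^{-1}} \notin [J]$, and then extremizing to upgrade $\tau_0$ to a root satisfying the additive-closure condition~$(i)$ while preserving the $[J]$-exclusion property that drives~$(iii)$. You obtain the seed via the Coxeter-theoretic characterization of parabolic Weyl subgroups ($w \in W_J$ iff $w(\Phi(J)^+) \subseteq \Phi^+$, equivalently the inversion set of $w^{-1}$ lies in $[J]^+$), which you correctly connect to $w \in {\bf P}_J(K)$ through the Bruhat decomposition. The paper obtains the same seed --- in fact a simple root $\alpha_j$ with $\alpha_j \notin [J^w]$ --- via a simplicial-building argument: $\mathfrak{C}_{{}^w{\bf P}_J}$ must escape some hemisphere $\mathcal{H}_j$, else the type-preserving action forces ${}^w{\bf P}_J = {\bf P}_J$. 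These are the same fact expressed in two languages. The extremization is where the proofs diverge more substantively. The paper performs a lexicographic-style greedy minimization of the $\Delta^w$-coordinates of $\tau$ (which are nonpositive), arranging the indexing so that $J^w$ comes first; the key point it exploits implicitly is that the $(\Delta^w \setminus J^w)$-coefficients only ever decrease, so they stay strictly negative, preserving $\tau \notin [J^w]$ without needing to say so. You instead take a maximal element of the explicitly carved-out subset $T = \{\tau : -\tau^{w^{-1}} \notin [J]\}$ in the ordinary dominance order on $\Phi$, and you verify directly that $T$ is closed under adding any $\alpha \in \Phi^+ \cap (\Phi^w)^-$ (the $[J]$-exclusion is monotone because $\beta' = -\alpha^{w^{-1}} \in \Phi^+$ has nonnegative coefficients). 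Your approach avoids the double coordinate system ($\Delta$ for $\Phi^+$, $\Delta^w$ for $(\Phi^w)^-$) and makes the invariant that survives the extremization explicit rather than implicit; the paper's is more self-contained geometrically and requires no Coxeter-theory facts beyond the Bruhat decomposition already stated in its preliminaries. Your verifications of~$(ii)$ and~$(iii)$ match the paper's in spirit and are correct.
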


\begin{proof}
Let $\Sigma$ be the apartment of the spherical building for ${\bf G}(K)$ that corresponds to $\bf A$. Let $\mathfrak{C}_{\bf P}, \mathfrak{C}_{^w\bf P},  \mathfrak{C}_{^w{\bf P}_J} \se \Sigma$ be the simplices corresponding to $\bf P$, $^w \bf P$, and  $^w{\bf P}_J$ respectively.

We write $\Delta $ as $\{\, \alpha _1,\ldots , \alpha _n\,\}$, and for each $j$, let $\mathcal{H}_j \se \Sigma$ be the simplicial hemisphere corresponding to $\alpha _j$. 

Either 
$\mathfrak{C}_{^w{\bf P}_J} \nsubseteq \mathcal{H}_j$ for some $j$, or else $\mathfrak{C}_{^w{\bf P}_J} \subseteq \cap_{j=1}^n\mathcal{H}_j= \mathfrak{C}_{{\bf P}} $ which implies through the type preserving action of the Weyl group on $\Sigma$ that $^w {\bf P}_J = {\bf P}_J $ and thus that $w \in {\bf P}_J(K)$. 

By the hypotheses of the lemma, we proceed under the assumption that there is some $j$ such that $\mathfrak{C}_{^w{\bf P}_J} \nsubseteq \mathcal{H}_j$.
Then clearly $\mathfrak{C}_{^w{\bf P}} \nsubseteq \mathcal{H}_j$, so that $\alpha _j \in  {\Phi^+\cap (\Phi^w)^-}$.

Let $\Delta ^w=\{\, \beta _1,\ldots , \beta _n\,\}$ and choose the ordering on the roots such that $J ^w=\{\, \beta _1,\ldots , \beta _{|J|}\,\}$. Because $\alpha_j \in (\Phi^w)^- $, we have $\alpha_j= \sum _i m_{0,i} \beta _i$ where $m_{0,i} \leq 0$ for all $i$.

Choose a root $\tau _1  \in \Phi^+ \cap  (\Phi^w)^- $ such that $\tau_1 = \sum _i m_{1,i} \beta _i$ where $m_{1,i} \leq m_{0,i}$ for all $i$ and such that of all possible choices for $\tau _1$ as above, the coefficient $m_{1,1}$ is minimal.

Then choose a root $\tau _2  \in \Phi^+ \cap  (\Phi^w)^- $ such that $\tau_2 = \sum _i m_{2,i} \beta _i$ where $m_{2,i}\leq m_{1,i} \leq m_{0,i}$ for all $i$ and such that of all possible choices for $\tau _2$, the coefficient $m_{2,2}$ is minimal. (Notice of course that $m_{2,1}=m_{1,1}$ for any choice of $\tau _2$ by our choice of $\tau _1$.)

Continue in this manner until obtaining a root $\tau _n  \in \Phi^+ \cap  (\Phi^w)^- $ such that $\tau_n = \sum _i m_{n,i} \beta _i$ where $m_{n,i} \leq m_{n-1,i} \leq \cdots \leq m_{0,i} \leq 0$ for all $i$ and such that if there is a root $\sum _i m_{i} \beta _i \in \Phi^+ \cap  (\Phi^w)^-$ with $m_i \leq m_{n,i}$ for all $i$, then $\sum _i m_{i} \beta _i =\tau _n$.

We rename $\tau_n$ as $\tau _{J,I,w}$. If $\alpha \in \Phi^+ \cap  (\Phi^w)^-$, then $\alpha +\tau _{J,I,w} \notin  \Phi^+ \cap  (\Phi^w)^-$ by the previous paragraph. But $ \Phi^+ \cap  (\Phi^w)^-$ is closed under addition, so it must be that $\alpha +\tau _{J,I,w} \notin  \Phi$. This is part $(i)$ of the lemma.

Part $(ii)$ follows from $\tau _{J,I,w} \in  \Phi^+$.

For part $(iii)$, notice that $\mathfrak{C}_{^w{\bf P}_J} \nsubseteq \mathcal{H}_j$ implies that  $\alpha _j \notin [J^w]$. Therefore $\tau _{J,I,w} \notin [J^w]$, and in particular, $m_{n,k} < 0$ for some $k >|J|$.

Let $a \in \,^wA_J^+(t)$ where $t>1$. Then $|\beta _i(a)|=1$ if $i \leq |J|$ and $|\beta _i(a)|\geq t$ for all $i >|J|$. Therefore $|\beta_i (a)|^{m_{n,i}} \leq 1$ for all $i$ (since $m_{n,i}\leq 0$) , and  
\begin{align*} |\tau _{J,I,w} (a)| &= \prod_{v \in S}|\tau _{J,I,w} (a_v)|_v \\
&= \prod_{v \in S}\Big|\sum_im_{n,i}\beta_i (a_v)\Big|_v \\
&= \prod_{v \in S}\prod_i(|\beta_i (a_v)|_v)^{m_{n,i}} \\
&= \prod_i|\beta_i (a)|^{m_{n,i}} \\
&\leq|\beta_k (a)|^{m_{n,k}} \\
&\leq t^{m_{n,k}} 
\end{align*}

\end{proof}

\subsection{Proximity to integer points} Our proof will rely on identifying certain points in $G$ that are close to points in ${\bf G}(\mathcal{O}_S)$ (Lemma~\ref{l:close} below), identifying certain points in $G$ that are far from points in ${\bf G}(\mathcal{O}_S)$  (Lemma~\ref{l:far} below), and then contrasting these two identifications.

\begin{lemma}\label{l:21}
Suppose $X \se {\bf G}(K)$ is a finite set. Then there is a bounded set $B \se G$ such that $X {\bf G}(\mathcal{O}_S) \se {\bf G}(\mathcal{O}_S)B$.

\end{lemma}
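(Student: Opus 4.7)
The plan is to reduce to the case of a single $x \in {\bf G}(K)$ and then to exploit the fact that ${\bf G}(K)$ commensurates ${\bf G}(\mathcal{O}_S)$. First I would observe that it suffices to produce, for each individual $x \in X$, a bounded set $B_x \subseteq G$ with $x{\bf G}(\mathcal{O}_S) \subseteq {\bf G}(\mathcal{O}_S) B_x$; then $B = \bigcup_{x \in X} B_x$ is a finite union of bounded sets, hence bounded.

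The key input is the standard fact that for any $x \in {\bf G}(K)$ the conjugate $x{\bf G}(\mathcal{O}_S)x^{-1}$ is commensurable with ${\bf G}(\mathcal{O}_S)$. One proves this by fixing a $K$-embedding ${\bf G} \hookrightarrow {\bf GL}_n$ and clearing denominators in the entries of $x$ and $x^{-1}$ using $\mathcal{O}_S$: this produces an $\mathcal{O}_S$-ideal whose associated principal congruence subgroup has finite index in ${\bf G}(\mathcal{O}_S)$ and is conjugated by $x$ into ${\bf G}(\mathcal{O}_S)$. Granting this, I would set $\Gamma_0 = {\bf G}(\mathcal{O}_S) \cap x^{-1}{\bf G}(\mathcal{O}_S)x$, which has finite index in ${\bf G}(\mathcal{O}_S)$, and pick coset representatives so that ${\bf G}(\mathcal{O}_S) = \bigsqcup_{i=1}^k \Gamma_0 \gamma_i$.

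The computation that finishes the proof is then
\[
x{\bf G}(\mathcal{O}_S) \;=\; \bigcup_{i=1}^k x\,\Gamma_0\, \gamma_i \;=\; \bigcup_{i=1}^k \bigl(x\Gamma_0 x^{-1}\bigr)\,x\gamma_i \;\subseteq\; \bigcup_{i=1}^k {\bf G}(\mathcal{O}_S)\,x\gamma_i,
\]
where the inclusion uses $x\Gamma_0 x^{-1} \subseteq {\bf G}(\mathcal{O}_S)$ by construction of $\Gamma_0$. Thus $B_x = \{x\gamma_1, \ldots, x\gamma_k\}$ is a finite (hence bounded) subset of $G$ that works for $x$, and taking a union over $x \in X$ yields the desired bounded $B$. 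The only nontrivial ingredient is the commensurability statement, which is standard for arithmetic groups; after that, the argument is pure coset bookkeeping, so I expect no genuine obstacle.
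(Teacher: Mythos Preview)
Your proof is correct and follows essentially the same approach as the paper: both arguments use the commensurability of ${\bf G}(\mathcal{O}_S)$ and $x{\bf G}(\mathcal{O}_S)x^{-1}$ to write $x{\bf G}(\mathcal{O}_S)$ as ${\bf G}(\mathcal{O}_S)$ times a finite set of coset representatives. The only cosmetic difference is that the paper takes $\Gamma_x = {\bf G}(\mathcal{O}_S) \cap x{\bf G}(\mathcal{O}_S)x^{-1}$ and decomposes $x{\bf G}(\mathcal{O}_S)x^{-1}$ into right $\Gamma_x$-cosets, whereas you take $\Gamma_0 = {\bf G}(\mathcal{O}_S) \cap x^{-1}{\bf G}(\mathcal{O}_S)x$ and decompose ${\bf G}(\mathcal{O}_S)$; the two are equivalent up to which side carries the conjugation.
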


\begin{proof}

For $x \in X$, we let $\Gamma _x =  {\bf G}(\mathcal{O}_S) \cap  x{\bf G}(\mathcal{O}_S) x^{-1}$. Since $x \in {\bf G}(K)$, there is a finite set $\{y_1,y_2,\ldots ,y_k\} $ of right coset representatives for $\Gamma _x$ in $ x{\bf G}(\mathcal{O}_S) x^{-1}$.

Thus, $x {\bf G}(\mathcal{O}_S) =x{\bf G}(\mathcal{O}_S) x^{-1}x=\cup_i \Gamma_x y_i x \se {\bf G}(\mathcal{O}_S) \{y_1,y_2,\ldots ,y_k\}x $. 

\end{proof}

As a consequence of the previous lemma we have

\begin{lemma}\label{l:close}
There is some $C>0$ such that any point in $$ {\bf G}(\mathcal{O}_S)F^{-1}{\bf G}(\mathcal{O}_S)F{\bf G}(\mathcal{O}_S)UMW^{-1}$$ is
within distance $C$ of a point in ${\bf G}(\mathcal{O}_S)$.
\end{lemma}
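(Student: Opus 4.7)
The plan is to exhibit a bounded set $B \subseteq G$ such that
$${\bf G}(\mathcal{O}_S)F^{-1}{\bf G}(\mathcal{O}_S)F{\bf G}(\mathcal{O}_S)UMW^{-1} \subseteq {\bf G}(\mathcal{O}_S) \cdot B.$$
This suffices, because left-invariance of the metric gives $d(\gamma b, \gamma) = d(b, 1) \leq \text{diam}(B)$ for every $\gamma \in {\bf G}(\mathcal{O}_S)$ and $b \in B$. Write $\Gamma := {\bf G}(\mathcal{O}_S)$ for convenience.

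First I would sharpen the statement of Lemma~\ref{l:21}: its proof actually produces an absorbing set $\{y_1 x, \ldots, y_k x\}$ lying in ${\bf G}(K)$, not merely bounded in $G$. This strengthened form is what permits iteration. Applying it to $F^{-1}$ yields a finite $Y_1 \subseteq {\bf G}(K)$ with $F^{-1} \Gamma \subseteq \Gamma Y_1$, whence $\Gamma F^{-1} \Gamma F \subseteq \Gamma Y_1 F$, and $Y_1 F$ is again finite in ${\bf G}(K)$. A second application gives a finite $Y_2 \subseteq {\bf G}(K)$ with $(Y_1 F) \Gamma \subseteq \Gamma Y_2$, so that $\Gamma F^{-1} \Gamma F \Gamma \subseteq \Gamma Y_2$.

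Next I would show $UM \subseteq \Gamma \cdot B_0$ for a bounded $B_0 \subseteq G$. Since ${\bf U}$ is unipotent and (in the minimal case $I = \emptyset$) ${\bf M}$ is $K$-anisotropic reductive, the lattices ${\bf U}(\mathcal{O}_S) \leq U$ and ${\bf M}(\mathcal{O}_S) \leq M$ are cocompact, so I can fix bounded fundamental domains $B_U \subseteq U$ and $B_M \subseteq M$ with $U = {\bf U}(\mathcal{O}_S) B_U$ and $M = {\bf M}(\mathcal{O}_S) B_M$. Given $u = u_0 b_u$ and $m = m_0 b_m$, the hard step is to commute the bounded factor $b_u$ past the integer factor $m_0$: using that $M$ normalizes $U$ and that ${\bf M}(\mathcal{O}_S)$ preserves ${\bf U}(\mathcal{O}_S)$, the conjugate $m_0^{-1} b_u m_0$ still lies in $U$, so by cocompactness it can be rewritten as $u_1 b_u'$ with $u_1 \in {\bf U}(\mathcal{O}_S)$ and $b_u' \in B_U$. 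Then $um = (u_0 m_0 u_1)(b_u' b_m) \in \Gamma \cdot B_U B_M$.

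Combining the two steps, the original set is contained in $\Gamma Y_2 \cdot \Gamma B_U B_M \cdot W^{-1}$. A third invocation of Lemma~\ref{l:21} provides a finite $Y_3 \subseteq {\bf G}(K)$ with $Y_2 \Gamma \subseteq \Gamma Y_3$, collapsing the middle copy of $\Gamma$ and giving containment in $\Gamma \cdot Y_3 B_U B_M W^{-1}$. Since $Y_3$ and $W$ are finite subsets of ${\bf G}(K)$, the product $Y_3 B_U B_M W^{-1}$ is a finite union of translates of the bounded set $B_U B_M$ by fixed elements of $G$, hence bounded; this completes the plan. The main obstacle will be Step 2: naively, left-multiplying a bounded set by an element of $\Gamma$ does \emph{not} preserve bounded distance to $\Gamma$, because conjugation by $\Gamma$ can be unbounded on $U$ or $M$, so the cocompactness-based ``re-extraction'' of an integer factor from $m_0^{-1} b_u m_0$ is essential for keeping the bound under control.
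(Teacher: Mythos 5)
Your proposal is correct and follows the same strategy as the paper: absorb $UM$ into $\Gamma$ times a bounded set via cocompactness, then iterate Lemma~\ref{l:21} (using, as you correctly observe, that its proof yields an absorbing set inside ${\bf G}(K)$ rather than merely a bounded one). The only difference is cosmetic: the paper treats ${\bf UM}$ as a single $K$-group with cocompact arithmetic subgroup $({\bf UM})(\mathcal{O}_S)$, whereas you handle $U$ and $M$ separately and use the conjugation/re-extraction step to recombine them — a valid but slightly longer route to the same bounded set.
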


\begin{proof}
Since $\bf U$ is unipotent and $\bf M$ is $K$-anisotropic, it follows that there is some compact set $B \se UM$ such that $({\bf{UM}})(\mathcal{O}_S)B=UM$. Thus, $ {\bf G}(\mathcal{O}_S)F^{-1}{\bf G}(\mathcal{O}_S)F{\bf G}(\mathcal{O}_S)UMW^{-1}$ is contained in \newline ${\bf G}(\mathcal{O}_S) F^{-1}{\bf G}(\mathcal{O}_S)F{\bf{G}}(\mathcal{O}_S)BW^{-1}$ and the lemma follows from the previous lemma.

\end{proof}

For $\tau \in \Phi $ let ${\bf A}_{\tau}$ be the kernel of $\tau  $ in $\bf A$. Let $$A(\tau, t)=\{\, a\in A \mid |\tau (a)| \geq t  \,\}$$ and fix $a_\tau \in A$ such that $|\tau (a_\tau)|>1$.

\begin{lemma}\label{l:half} There is some $C>0$ such that for any $ k_0 \in \mathbb{N}$, there is some $t_0>1$ such that  the Hausdorff distance between $\cup_{k \geq k_0}{\bf A}(\mathcal{O}_S){ A}_{\tau} (a_\tau) ^k$  and $A(\tau,t_0)$ is at most $C$.
\end{lemma}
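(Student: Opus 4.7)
The plan is to regard $A$ as fibered over $\mathbb{R}$ via the Lipschitz group homomorphism $\phi\colon A \to \mathbb{R}$ defined by $\phi(a)=\log|\tau(a)|$, so that $A(\tau,t)=\phi^{-1}([\log t,\infty))$ is the closed half-space above level $\log t$, and multiplication by $a_\tau$ translates by the positive quantity $\phi(a_\tau)=\log|\tau(a_\tau)|$ along the $\mathbb{R}$-direction. First I would show that $A_\tau\cdot {\bf A}(\mathcal{O}_S)$ is cocompact in $\ker\phi$. Both inclusions into $\ker\phi$ are immediate, since $\tau$ is trivial on $A_\tau$ and since $\tau(a)$ is an $S$-unit for $a\in{\bf A}(\mathcal{O}_S)$, so the product formula forces $|\tau(a)|=1$. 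Passing to the quotient $A/A_\tau$, which embeds via $\tau$ into $\prod_{v\in S}K_v^\times$, the image of ${\bf A}(\mathcal{O}_S)$ is a finite-index subgroup of the $S$-units lying inside $\tau({\bf A}(K))$; Dirichlet's $S$-unit theorem makes this image a cocompact lattice in the norm-one subgroup of $\prod_{v\in S}K_v^\times$, and lifting yields the desired cocompactness in $\ker\phi$.

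Next, since $\phi(a_\tau)>0$, the cyclic group $\langle a_\tau\rangle$ projects to a cocompact subgroup of $\phi(A)\subseteq\mathbb{R}$, and combined with the previous step the set $Y := A_\tau\cdot{\bf A}(\mathcal{O}_S)\cdot\langle a_\tau\rangle$ is cocompact in $A$. Fix $D>0$ so that every element of $A$ lies within distance $D$ of $Y$, and let $L$ denote a Lipschitz constant for $\phi$ (which exists because $\phi$ is a character and the metric on $A$ is quasi-isometric to Euclidean space after taking logarithms of coordinates). I would then set $\log t_0 = k_0\phi(a_\tau)+LD+1$ and verify both halves of the Hausdorff bound. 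For $a\in A(\tau,t_0)$, choose $b=\gamma\mu a_\tau^k\in Y$ with $d(a,b)\leq D$; then $k\phi(a_\tau)=\phi(b)\geq\phi(a)-LD\geq k_0\phi(a_\tau)+1$, so $k\geq k_0$ and $b$ lies in the union. Conversely, any $b=\gamma\mu a_\tau^k$ with $k\geq k_0$ can be shifted by $a_\tau^j$ for some $0\leq j\leq \lceil(LD+1)/\phi(a_\tau)\rceil$ into $A(\tau,t_0)$ at cost at most $|j|\,d(1,a_\tau)$. Both shift sizes are bounded by a constant $C$ that does not depend on $k_0$, yielding the universal Hausdorff distance bound.

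The main obstacle is the first step: the cocompactness of $A_\tau\cdot {\bf A}(\mathcal{O}_S)$ in $\ker\phi$, which amounts to the $S$-unit theorem for the quotient torus $A/A_\tau$. Some care is needed to confirm that the image of ${\bf A}(\mathcal{O}_S)$ in $A/A_\tau$ has finite index in the appropriate $S$-units of this quotient, but this is standard for algebraic tori. Once that cocompactness is in hand, the rest is a routine half-space argument exploiting the Lipschitz property of $\phi$ together with the bounded-density nature of $\langle a_\tau\rangle$ inside the one-dimensional target $\phi(A)$.
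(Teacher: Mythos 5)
Your proposal is correct and uses essentially the same ingredients as the paper's proof: the product formula and boundedness of ${\bf A}(\mathcal{O}_S)$ at places outside $S$ to place ${\bf A}(\mathcal{O}_S)A_\tau$ in the level set $|\tau|=1$, Dirichlet's $S$-unit theorem to establish cocompactness there (the paper phrases this as a dimension count comparing ${\bf A}(\mathcal{O}_S)A_\tau$ to $A(\tau,1)\cap A(-\tau,1)$, while you fiber $A$ over $\mathbb{R}$ via $\log|\tau|$ and argue cocompactness of the kernel directly), and a translation by powers of $a_\tau$ to reach the half-space $A(\tau,t_0)$.
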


\begin{proof}
Since $\mathcal{O}_S \se K_w$ is bounded if $w \notin S$, we have that
${\bf A}(\mathcal{O}_S) \leq {\bf A}(K_w)$ is bounded. Hence the image of ${\bf A}(\mathcal{O}_S)$ under the map $g \mapsto |\tau(g)|_w$ is bounded and therefore is trivial. 

For any $x \in K$, the product over all valuations $v$ of $K$ of $|x|_v$ equals $1$, so it follows that $|\tau (a)|=1$ for any 
$a \in {\bf A}(\mathcal{O}_S)$.

Notice also that $|\tau (a)|=1$ for any $a\in A_\tau$, so ${\bf A}(\mathcal{O}_S){ A}_{\tau} \se A(\tau,1)\cap A(-\tau,1)$ and
the lemma will follow for $t_0=|\tau ((a_\tau)^{k_0})|$ if we establish that ${\bf A}(\mathcal{O}_S){ A}_{\tau} $  is a finite Hausdorff distance from $A(\tau,1)\cap A(-\tau,1)$. This essentially follows from a dimension count. 

The group $ A_\tau$ is quasi-isometric to Euclidean space of dimension $|S| ({\text{rank}}_K({\bf A})-1)$. Dirichlet's units theorem gives us that the dimension of ${\bf A}(\mathcal{O}_S)$ equals $(|S|-1)  {\text{rank}}_K({\bf A})$ and that the dimension of ${\bf A}_\tau(\mathcal{O}_{S})$ equals $(|S|-1)  ({\text{rank}}_K({\bf A})-1) $.

Since ${\bf A}(\mathcal{O}_S) \cap { A}_{\tau}={\bf A}_\tau(\mathcal{O}_{S})$, it follows that the dimension of  ${\bf A}(\mathcal{O}_S){ A}_{\tau} $ equals $$  ( |S|-1)  {\text{rank}}_K({\bf A}) +|S| ({\text{rank}}_K({\bf A})-1) - (|S|-1) ({\text{rank}}_K({\bf A})-1)$$
The above number is $|S|  {\text{rank}}_K({\bf A})  -1$, which is the dimension of $A(\tau,1)\cap A(-\tau,1)$. Therefore, ${\bf A}(\mathcal{O}_S){ A}_{\tau} $  is a finite Hausdorff distance from $A(\tau,1)\cap A(-\tau,1)$ which proves the lemma.
\end{proof}

We will use the previous lemma to establish the following 

\begin{lemma}\label{l:far}
Suppose $I,J \subsetneq \Delta$, that $w \in W$, and $w \notin {\bf P}_J(K)$. Then for any $C>0$, there exists some $t>1$ such that any $g \in U_{w,-}MA(\tau _{J,I,w},t)$ is distance at least $C$ from  ${\bf G}(\mathcal{O}_S)$.
\end{lemma}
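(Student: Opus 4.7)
The plan is to use the adjoint action on the root space $\mathfrak{g}_\tau$ of $\mathfrak{g}=\mathrm{Lie}({\bf G})$, where $\tau=\tau_{J,I,w}$, combined with the discreteness of $\mathfrak{g}(\mathcal{O}_S)$ in $\prod_{v\in S}\mathfrak{g}(K_v)$. Pick a nonzero $K$-rational vector $e_\tau\in\mathfrak{g}_\tau(K)$. I will first verify that for every $g=uma\in U_{w,-}MA$,
$$\mathrm{Ad}(g)\,e_\tau = c(m)\,\tau(a)\,e_\tau,$$
where $c\colon{\bf M}\to\mathbb{G}_m$ is a $K$-rational character. Indeed, $A$ acts on $\mathfrak{g}_\tau$ by the scalar $\tau(a)$ by definition of the root space; property $(i)$ of Lemma~\ref{l:root1} gives $[\mathfrak{u}_{w,-},\mathfrak{g}_\tau]\se\bigoplus_{\alpha\in\Phi^+\cap(\Phi^w)^-}\mathfrak{g}_{\alpha+\tau}=0$, so $U_{w,-}$ fixes $\mathfrak{g}_\tau$ pointwise; and $M$ centralizes $A$, so it preserves the weight space $\mathfrak{g}_\tau$ and (assuming this space is one-dimensional) acts by a $K$-character $c$. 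Since $\bf M$ is reductive with $K$-anisotropic center, $X^*_K({\bf M})$ is torsion, so $|c(m)|_v=1$ at every place $v\in S$.

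Next I will argue by contradiction. Suppose some $g=uma\in U_{w,-}MA(\tau,t)$ satisfies $g=\gamma b$ for some $\gamma\in{\bf G}(\mathcal{O}_S)$ and some $b$ in a bounded set $B\se G$ of diameter governed by $C$. Rearranging the displayed identity,
$$\mathrm{Ad}(\gamma)^{-1}e_\tau = c(m)^{-1}\,\tau(a)^{-1}\,\mathrm{Ad}(b)\,e_\tau,$$
and taking local norms with respect to a fixed $K$-basis of $\mathfrak{g}$ (max-coefficient norm at each $v$) before multiplying over $S$ gives
$$\prod_{v\in S}\|\mathrm{Ad}(\gamma)^{-1}e_\tau\|_v = |\tau(a)|^{-1}\prod_{v\in S}\|\mathrm{Ad}(b)\,e_\tau\|_v \le \frac{C'}{|\tau(a)|},$$
with $C'$ depending only on $C$.

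For the reverse inequality, I will use that, after scaling $e_\tau$ into a fixed $\mathrm{Ad}({\bf G}(\mathcal{O}_S))$-stable $\mathcal{O}_S$-lattice in $\mathfrak{g}$, the vector $\mathrm{Ad}(\gamma)^{-1}e_\tau$ is a nonzero element of $\mathfrak{g}(\mathcal{O}_S)\se\prod_{v\in S}\mathfrak{g}(K_v)$. At least one of its coordinates in the chosen $K$-basis is a nonzero element $k\in\mathcal{O}_S$; since $|k|_v\le 1$ for $v\notin S$, the product formula yields $\prod_{v\in S}|k|_v\ge 1$, and hence there is a positive constant $c_0$ independent of $\gamma$ with $\prod_{v\in S}\|\mathrm{Ad}(\gamma)^{-1}e_\tau\|_v\ge c_0$. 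Combining the two inequalities forces $|\tau(a)|\le C'/c_0$, so choosing $t>C'/c_0$ yields the desired contradiction.

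The hard part I anticipate is the one-dimensionality hypothesis on $\mathfrak{g}_\tau$, which can fail in bad characteristic or for non-reduced $K$-root systems. To handle the general case, I would replace $e_\tau$ by a highest-weight vector in an auxiliary irreducible $K$-representation of $\bf G$ of weight a positive integral multiple of $\tau$ (so that $U_{w,-}$ still fixes it and $\bf M$ acts by a torsion $K$-character, both for the same reasons as above), and rerun the same adjoint-plus-discreteness argument in that representation.
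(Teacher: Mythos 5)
Your core argument takes a genuinely different route from the paper's, though both pivot on the same two facts: property $(i)$ of Lemma~\ref{l:root1} (so that $U_{w,-}$ does not interfere with the $\tau$-weight), and discreteness of ${\bf G}(\mathcal{O}_S)$ in $G$. The paper works entirely at the group level: it fixes a nontrivial $\gamma\in{\bf U}_{(\tau)}(\mathcal{O}_S)$, observes that $U_{w,-}$ and $A_\tau$ commute with $\gamma$ while conjugation by powers of $a_\tau$ contracts $\gamma$ toward $1$, and uses a Mahler-type fact ($d(F_k,{\bf G}(\mathcal{O}_S))\to\infty$) together with Lemma~\ref{l:half} and the cocompactness of ${\bf M}(\mathcal{O}_S)$ in $M$ to conclude. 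Your version linearizes this via the adjoint action and the product formula; when the $K$-root space $\mathfrak{g}_\tau$ is one-dimensional, your computation is correct, and it is arguably more direct than the paper's since it avoids introducing the sets $O_k$, $F_k$ and the auxiliary Lemma~\ref{l:half}.

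However, there is a genuine gap in your proposed repair for the general case. The vector you want must simultaneously be (a) an $A$-weight vector of weight a positive multiple of $\tau$, (b) fixed by $U_{w,-}$, and (c) spanning a one-dimensional space on which $\bf M$ acts by a $K$-character. Your suggestion is to use a highest-weight vector of an irreducible $K$-representation with highest weight a positive multiple of $\tau$. But $\tau=\tau_{J,I,w}$ is just a positive $K$-root and is typically \emph{not} dominant — for instance, when $w$ is a simple reflection, $\tau$ is a simple root — so no such representation exists. Nor can you simply drop the one-dimensionality hypothesis in the adjoint: the uniform lower bound $\prod_{v\in S}\|\mathrm{Ad}(\gamma)^{-1}\mathrm{Ad}(m)e_\tau\|_v\ge c_0$ fails in general, because $\mathrm{Ad}(m)e_\tau$ need not lie in the lattice, and one can arrange integral $\mathrm{Ad}(\gamma)$ to shrink a compact family of non-lattice vectors in a $\ge 2$-dimensional space. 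A working fix in your framework is to pass to the top exterior power: replace $e_\tau$ by a nonzero $e:=\ell_1\wedge\cdots\wedge\ell_d\in\bigl(\bigwedge^d\mathfrak{g}_\tau\bigr)(\mathcal{O}_S)$ where $d=\dim\mathfrak{g}_\tau$. Then $A$ acts by $\tau(a)^d$, $U_{w,-}$ acts trivially, and $\bf M$ acts by $\det(\mathrm{Ad}(m)|_{\mathfrak{g}_\tau})$, which is a $K$-character of $\bf M$ and hence torsion, so your norm computation goes through verbatim. Alternatively, one can simply stay at the group level as the paper does, which sidesteps the multiplicity issue entirely.
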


\begin{proof}

Choose $\gamma \in {\bf U}_{(\tau _{J,I,w})}(\mathcal{O}_S)$ with $\gamma \neq 1$.

For $k\in \mathbb{N}$, let $$O_k=\{\,u \in { U}_{(\tau _{J,I,w})} \mid d(u,1)\leq \frac 1k\,\}$$ and let $$F_k=\{\,g\in {G} \mid g^{-1} \gamma g \in O_k \,\}$$ so that $F_{k+1} \se F_k$.

For $k$ sufficiently large, $O_k \cap  {\bf G}(\mathcal{O}_S)=1$, which implies $F_k \cap  {\bf G}(\mathcal{O}_S)=\emptyset$, and in fact $\lim_{k \to \infty}d(F_k \,,\,  {\bf G}(\mathcal{O}_S))= \infty$. 
Let $m$ be such that  the distance between $F_{m}$ and ${\bf G}(\mathcal{O}_S)$ is sufficiently large.
 
 Note that $\lim _k (a_{\tau _{J,I,w}})^{-k}\gamma (a_{\tau _{J,I,w}})^{k}=1$, so there is some $k_0 \in \mathbb{N}$ such that $(a_{\tau _{J,I,w}})^k \in F_m$ if $k \geq k _0$.
 
Let $\alpha \in \Phi ^+ \cap (\Phi ^w)^-$. By $(i)$ of Lemmas~\ref{l:root1} and~\ref{l:root2}, commutators of elements in $U_{({\alpha})}$ with elements in $U_{({\tau _{J,I,w}})}$ are contained in $U_{({\alpha + \tau _{J,I,w}})}=1$. That is, the group $U_{w,-}$ commutes with $U_{({\tau _{J,I,w}})}$,  and in particular, with $\gamma$. 

 Notice that ${ A}_{\tau _{J,I,w}}$ also commutes with $\gamma \in U_{(\tau _{J,I,w})}$. Therefore, if $g \in U_{w,-}{ A}_{\tau _{J,I,w}}$ and $k \geq k_0$, then $$(a_{\tau _{J,I,w}})^{-k}g^{-1}\gamma g(a_{\tau _{J,I,w}})^{k}=(a_{\tau _{J,I,w}})^{-k} \gamma (a_{\tau _{J,I,w}})^{k}\in O_m$$ so $g (a_{\tau _{J,I,w}})^k \in F_m$. 
 
The distance between  $ {\bf G}(\mathcal{O}_S)$ and $\lambda F_n$ for any $\lambda \in {\bf G}(\mathcal{O}_S)$ equals the distance between $F_n$ and $\lambda ^{-1} {\bf G}(\mathcal{O}_S) = {\bf G}(\mathcal{O}_S) $. Therefore, the union over ${k\geq k_0}$ of the sets $$U_{w,-} {\bf M}(\mathcal{O}_S){\bf A}(\mathcal{O}_S){ A}_{{\tau _{J,I,w}}} (a_{\tau _{J,I,w}})^k= {\bf M}(\mathcal{O}_S){\bf A}(\mathcal{O}_S)U_{w,-}{ A}_{{\tau _{J,I,w}}} (a_{\tau _{J,I,w}})^k$$ is a sufficiently large distance from  ${\bf G}(\mathcal{O}_S)$. 

Using Lemma~\ref{l:half}, we have for some $t>1$ that there is a sufficiently large distance between ${\bf G}(\mathcal{O}_S)$ and $U_{w,-} {\bf M}(\mathcal{O}_S)A(\tau _{J,I,w},t) $, and thus that there is a sufficiently large distance between ${\bf G}(\mathcal{O}_S)$ and $U_{w,-} {\bf M}(\mathcal{O}_S)A(\tau _{J,I,w},t) B$ where $B$ is a given compact set. Precisely, since $\bf M$ is $K$-anisotropic, we choose $B \se M$ to be a compact fundamental domain for ${\bf M}(\mathcal{O}_S)$. Our lemma follows since elements of $B$ commute with those in $A$. 
 
\end{proof}

Notice that in the above proof, the properties of $\tau_{J,I,w}$ are used to find an integral unipotent element ($\gamma$) that commutes with the unipotent group $U_{w,-}$. Thus, if $U_{w,-}$ were replaced with the trivial group in the above lemma, we would be free to apply the resulting statement to any root $\tau \in \Phi$. That is, the proof of the preceding lemma simplifies to prove the following

\begin{lemma}\label{l:far2}
Suppose $\tau \in \Phi $. Then for any $C>0$, there exists some $t>1$ such that any $g \in A(\tau ,t)$ is distance at least $C$ from  ${\bf G}(\mathcal{O}_S)$.
\end{lemma}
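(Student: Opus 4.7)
The proof is a direct simplification of that of Lemma~\ref{l:far}, as the paragraph preceding the statement already advertises: one repeats the argument with the unipotent factor $U_{w,-}$ replaced by the trivial group and the auxiliary root $\tau_{J,I,w}$ replaced by the given $\tau$. The role of Lemma~\ref{l:root1} in the earlier proof was only to arrange that $U_{w,-}$ commutes with the chosen integral unipotent element $\gamma$; in the absence of $U_{w,-}$ there is no analogous requirement, so the hypotheses on $\tau$ are now merely that $\tau \in \Phi$.

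I would begin by fixing $\gamma \in {\bf U}_{(\tau)}(\mathcal{O}_S) \setminus \{1\}$ (such a $\gamma$ exists because ${\bf U}_{(\tau)}$ is a nontrivial $K$-defined unipotent group, so its $\mathcal{O}_S$-points form an infinite lattice), and then defining the shrinking neighborhoods $O_k$ of $1$ and the sets $F_k=\{g\in G:g^{-1}\gamma g\in O_k\}$ exactly as in the proof of Lemma~\ref{l:far}. The same argument given there yields $O_k \cap {\bf G}(\mathcal{O}_S) = \{1\}$ for $k$ large and $d(F_k,{\bf G}(\mathcal{O}_S)) \to \infty$; pick $m$ with $d(F_m, {\bf G}(\mathcal{O}_S)) \geq C$. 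Next I would pick $a_\tau \in A$ with $|\tau(a_\tau)|_v > 1$ at every $v \in S$ (selecting the $v$-components of $a_\tau$ independently), so that conjugation by $(a_\tau)^k$ shrinks $\gamma$ at each place and $(a_\tau)^{-k}\gamma (a_\tau)^k \to 1$ in $G$. This furnishes a $k_0$ with $(a_\tau)^k \in F_m$ for all $k \geq k_0$.

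The key observation is that ${\bf A}_\tau$ centralizes ${\bf U}_{(\tau)}$, since every $a \in {\bf A}_\tau$ acts on the root group by the character $\tau(a) = 1$. Hence for any $a \in {\bf A}_\tau$ and any $k \geq k_0$, $(a (a_\tau)^k)^{-1}\gamma\, a (a_\tau)^k = (a_\tau)^{-k}\gamma (a_\tau)^k \in O_m$, so $a (a_\tau)^k \in F_m$. Furthermore left translation by any $b \in {\bf A}(\mathcal{O}_S) \leq {\bf G}(\mathcal{O}_S)$ is an isometry of $G$ preserving ${\bf G}(\mathcal{O}_S)$, so the whole set $\bigcup_{k \geq k_0}{\bf A}(\mathcal{O}_S)\,{\bf A}_\tau\, (a_\tau)^k$ is at distance at least $C$ from ${\bf G}(\mathcal{O}_S)$. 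Applying Lemma~\ref{l:half} to identify this union (up to bounded Hausdorff distance) with $A(\tau, t_0)$ for an appropriate $t_0 > 1$, and absorbing the Hausdorff error into $C$, completes the argument. There is no genuine obstacle beyond bookkeeping; the mild subtlety of requiring $|\tau(a_\tau)|_v > 1$ at every place rather than just in the product sense is easily handled, since the local factors of $a_\tau$ may be chosen independently and $\tau$ is a nontrivial character on each ${\bf A}(K_v)$.
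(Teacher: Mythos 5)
Your proof is correct and is exactly the simplification of Lemma~\ref{l:far} that the paper itself indicates: with $U_{w,-}$ trivial, there is no commutation constraint on $\gamma$, so any $\tau \in \Phi$ works, and the rest of the argument ($F_k$-sets, translation by ${\bf A}(\mathcal{O}_S)$, Lemma~\ref{l:half}) goes through verbatim. Your added care in choosing $a_\tau$ with $|\tau(a_\tau)|_v > 1$ at \emph{each} place $v\in S$ (rather than merely $|\tau(a_\tau)|>1$ in the product sense, which is all the paper explicitly records) is in fact what the convergence $(a_\tau)^{-k}\gamma(a_\tau)^k\to 1$ in $G$ requires, and is a genuine point that the paper glosses over; the only trivial slip is writing ${\bf A}_\tau$ in a few spots where you mean its local points $A_\tau$.
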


As an immediate consequence of Lemma~\ref{l:far} we have the following

\begin{lemma}\label{l:halfup}
Suppose $I,J \subsetneq \Delta$, that $w \in W$, and $w \notin {\bf P}_J(K)$.
 For any 
 bounded set $B \se G$, there is some $s>1$ such that the sets $UMA(\tau_{J,I,w},1) B$ and $^wUMA(-\tau_{J,I,w},s) B$ are disjoint.
\end{lemma}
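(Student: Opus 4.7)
Plan: The plan is to argue by contradiction using Lemma~\ref{l:far}. Write $\tau := \tau_{J,I,w}$, and suppose $g$ lies in both sets for arbitrarily large $s$. After enlarging $B$ to absorb a compact fundamental domain for $\mathbf{M}(\mathcal{O}_S)$ in $M$, write $g = u_1 m_1 a_1 b_1 = u_2 m_2 a_2 b_2$ with $u_1 \in U$, $u_2 \in {}^wU$, the $m_i$ in a fixed compact subset of $M$, $a_1 \in A(\tau,1)$, $a_2 \in A(-\tau,s)$, and $b_i \in B$.

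First I would equate the two decompositions. Using that $M$ centralizes $A$ (so the $m_i$'s slide past the $a_i$'s) one obtains
\[
u_1^{-1}\,u_2 \;=\; a_1\,\eta^{-1}\,a_2^{-1},
\]
where $\eta := m_2\,(b_2 b_1^{-1})\,m_1^{-1}$ lies in a fixed compact set. Decomposing ${}^wU = U_{w,+} \cdot ({}^wU \cap U^-)$ and writing $u_2 = u_{2,+}\,u_{2,-}$ accordingly, and then setting $v := u_{2,+}^{-1}\,u_1 \in U$, this rewrites as
\[
v^{-1}\,u_{2,-} \;=\; a_1\,\eta^{-1}\,a_2^{-1},
\]
with the left-hand side lying in the product $U \cdot U^-$.

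Second, I would use uniqueness of the big-cell Bruhat decomposition (valid near the compact image of $\eta^{-1}$) to match $A$-components on the two sides. The right-hand side has $A$-part dominated by $a_1 a_2^{-1} \in A(\tau,s)$, while the left-hand side, being in $U \cdot U^-$, forces the $A$-component to be a bounded element derived from the big-cell decomposition of $u_{2,-}$. The commutator-vanishing in Lemma~\ref{l:root1}(i)---the same property that $U_{w,-}$ commutes with $\mathbf{U}_{(\tau)}$ used in Lemma~\ref{l:far}---is what allows the $A(\tau,s)$-scaling to be isolated cleanly against the $U_{w,-}$-portion of $u_1$. The result is an element of $U_{w,-}\,M\,A(\tau, t)$ sitting at a uniformly bounded distance from $1 \in \mathbf{G}(\mathcal{O}_S)$, where $t$ grows with $s$.

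Finally, Lemma~\ref{l:far} applied with $C$ larger than this bounded distance produces a threshold $t_0$; for $s \geq t_0$, the set $U_{w,-}\,M\,A(\tau, t_0)$ is at distance greater than $C$ from every integer point, including $1$, contradicting the bounded-distance conclusion. The hard part will be the second step: correctly matching $A$-components when the big-cell decomposition of $a_1\,\eta^{-1}\,a_2^{-1}$ involves possibly unbounded conjugations of the bounded element $\eta^{-1}$ by $a_1$ and $a_2$, and verifying that the bounded-distance-from-$1$ conclusion survives these conjugations.
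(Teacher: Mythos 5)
Your overall target (contradicting Lemma~\ref{l:far}) is the right one, and your initial bookkeeping step---writing $g = u_1 m_1 a_1 b_1 = u_2 m_2 a_2 b_2$ and deriving $u_1^{-1}u_2 = a_1\eta^{-1}a_2^{-1}$ for $\eta$ in a fixed compact set---is essentially the same reduction the paper performs (stated there set-theoretically: the intersection is nonempty iff ${}^w U\,U\,M\,A(\tau_{J,I,w},s)$ meets $BB^{-1}$). But your ``second step'' is not what the paper does, and the difficulty you flag there is a genuine gap, not just a detail to fill. Matching $A$-components via a big-cell Bruhat decomposition faces two obstructions: (a) the bounded element $\eta^{-1}$ need not lie in the big cell, so there may be no $UMA U^-$ factorization to compare against; (b) the unbounded conjugations by $a_1,a_2$ move the $U$- and $U^-$-components around without giving any control, so there is no clean ``isolation of the $A(\tau,s)$-scaling'' to extract. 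There is also a group mismatch in your plan: you decompose $u_2\in{}^wU$ via ${}^wU = U_{w,+}\cdot({}^wU\cap U^-)$, but ${}^wU\cap U^-$ (roots in $\Phi^-\cap(\Phi^w)^+$) is \emph{not} $U_{w,-}$ (roots in $\Phi^+\cap(\Phi^w)^-$, a subgroup of $U$), so the object you'd produce does not live in $U_{w,-}MA(\tau,t)$, which is what Lemma~\ref{l:far} requires.

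The paper's route avoids big-cell decompositions entirely and uses two observations you are missing. First, it decomposes $U$ (not ${}^wU$) as $U = U_{w,+}U_{w,-}$ and uses $U_{w,+}\le{}^wU$ to absorb $U_{w,+}$ into ${}^wU$, turning ${}^wU\,U$ into ${}^wU\,U_{w,-}$. This cleanly isolates a genuine $U_{w,-}$-factor and reduces the claim to: $U_{w,-}MA(\tau_{J,I,w},s)$ is disjoint from ${}^wU\,BB^{-1}$. Second, since ${}^w\mathbf{U}$ is a unipotent $K$-group, ${}^wU = ({}^w\mathbf{U})(\mathcal{O}_S)B_w$ for a compact $B_w$; so any element in the intersection would lie in $U_{w,-}MA(\tau_{J,I,w},s)$ and simultaneously within bounded distance of $\mathbf{G}(\mathcal{O}_S)$. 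Taking $s$ large and invoking Lemma~\ref{l:far} then gives the contradiction directly. This second observation---cocompactness of integer points in the unipotent group ${}^wU$---is the mechanism that produces the ``close to integer points'' hypothesis of Lemma~\ref{l:far}, and it is absent from your proposal; without it, the contradiction does not materialize even if the $A$-component bookkeeping were made to work.
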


\begin{proof}
Recall that $MA$ normalizes $U$ and $^w U$, elements of $A$ commute with elements in $M$, and that the inverse of an element in $A(-\tau_{J,I,w},s)$ is contained in $A(\tau_{J,I,w},s)$.

Thus, we can multiply given elements from each of the sets in question on the left by inverses of elements in $^wUMA(-\tau_{J,I,w},s)$, and on the right by inverses of elements in $B$, to see that the lemma follows from showing that the sets $ ^wUUMA(\tau_{J,I,w},s) $ and $BB^{-1}$ are disjoint for some $s>1$.

Recall that $U_{w,+} \,,\, U_{w,-} \leq U$, that $U_{w,+}U_{w,-}=U$, and that $U_{w,+} \leq \,^wU$. Thus, $ ^wUU=\,  ^wUU_{w,+}U_{w,-}= \,^wUU_{w,-}$, and after multiplying on the left by the inverses of elements in $^wU$, we are left to prove that the sets $ U_{w,-}MA(\tau_{J,I,w},s) $ and $^wU BB^{-1}$ are disjoint for some $s>1$. 

But $^w \bf U$ is a unipotent $K$-group, so there is some compact set $B_w \se \,^wU$ such that $^wU=(\,^w {\bf U}) (\mathcal{O}_S)B_w$. Thus, we need to show that  the sets $ U_{w,-}MA(\tau_{J,I,w},s) $ and $(\,^w {\bf U}) (\mathcal{O}_S) (B_w BB^{-1})$ are disjoint for some $s>1$. This follows from Lemma~\ref{l:far}.

\end{proof}

\subsection{Disjointness of distinct parabolic regions}
The goal of this subsection is to prove Lemma~\ref{l:31}, which will quickly imply that distinct parabolic regions are --- after removing a neighborhood of their boundaries --- disjoint.

Given $\gamma_1,\gamma_2,\gamma_3 \in {\bf G}(\mathcal{O}_S)$ and $f_1,f_2 \in F$, let $ p_2wp_1 \in {\bf P}(K) W  {\bf P}(K)$ be such that $\gamma_3^{-1}f_2^{-1}\gamma_2 f_1 \gamma_1 = p_2wp_1$. Let $p_i=u_im_ia_i$ for $u_i \in {\bf U}(K)$, $m_i \in {\bf M}(K)$, and $a_i \in {\bf A}(K)$.

Given $\gamma_3^{-1}f_2^{-1}\gamma_2 f_1 \gamma_1$, our choice of group elements $p_2$, $w$, $p_1$, $u_2$, $m_2$, $a_2$, $u_1$, $m_1$, and $a_1$  will be fixed for Lemmas~\ref{l:chamber}
 and~\ref{l:noname}.

\begin{lemma}\label{l:chamber} 
Suppose $I,J \subsetneq \Delta$, that $w \in W$, and $w \notin {\bf P}_J(K)$.
Suppose $s>0$ is given. Then there is some $t>1$ (independent of $\gamma_3^{-1}f_2^{-1}\gamma_2 f_1 \gamma_1$) such that 
$$a_2 \,  w a_1  A _J^+ (t) w^{-1} \se A(-\tau_{J,I,w},s)$$
\end{lemma}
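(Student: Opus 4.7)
The plan is to factor the target quantity multiplicatively, to control one factor by Lemma~\ref{l:root1}(iii), and to bound the other factor by a constant depending only on $F$ and the algebraic data. Writing $\tau := \tau_{J,I,w}$ and using that $\tau$ is a character of $\bf A$, one has, for every $b \in A$,
\[
|\tau(a_2 w a_1 b w^{-1})| \;=\; |\tau(a_2 w a_1 w^{-1})| \cdot |\tau(w b w^{-1})|.
\]
Since $b \in A_J^+(t)$ forces $wbw^{-1} \in {}^w A_J^+(t)$, Lemma~\ref{l:root1}(iii) supplies, for every prescribed $r > 0$, a threshold $t_r > 1$ depending only on $r$ and the ambient data (not on $h := \gamma_3^{-1} f_2^{-1} \gamma_2 f_1 \gamma_1$) such that $b \in A_J^+(t_r)$ implies $|\tau(w b w^{-1})| < r$. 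I will use this to make the second factor arbitrarily small.

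The heart of the argument is to bound $|\tau(a_2 w a_1 w^{-1})|$ by a constant $C$ independent of $h$. For this I will use that $F \subseteq {\bf G}(K)$ is finite: after fixing once and for all a $K$-rational faithful matrix representation of $\bf G$, there is some $N \in \mathcal{O}_S \setminus \{0\}$ (depending only on $F$) such that every matrix entry of $h$ lies in $\tfrac{1}{N}\mathcal{O}_S$. The Bruhat form $h = u_2 m_2 a_2 w u_1 m_1 a_1$ then lets me express $\tau(a_2 w a_1 w^{-1})^{-1} \in K^\times$ as a specific polynomial in the matrix entries of $h$, computed by pairing $h$ with extremal weight vectors in a fundamental representation of $\bf G$ whose highest weight pairs non-trivially with the coroot of $\tau$. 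Consequently $\tau(a_2 w a_1 w^{-1})^{-1}$ lies in $\tfrac{1}{N'}\mathcal{O}_S$ for some $N' \in \mathcal{O}_S \setminus \{0\}$ depending only on $F$ and $\bf G$, and an application of the product formula across the places outside $S$ yields $|\tau(a_2 w a_1 w^{-1})^{-1}| \geq 1/|N'|_S$, hence $|\tau(a_2 w a_1 w^{-1})| \leq |N'|_S =: C$.

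Combining the two bounds finishes the plan. Given $s > 0$, set $r := 1/(sC)$ and $t := t_r$. Then for any $b \in A_J^+(t)$,
\[
|\tau(a_2 w a_1 b w^{-1})| \;\leq\; C \cdot \tfrac{1}{sC} \;=\; \tfrac{1}{s},
\]
which is exactly the defining condition for $a_2 w a_1 b w^{-1} \in A(-\tau, s)$. The main technical hurdle will be the matrix-coefficient identification above: one must choose the fundamental representation so that the relevant product of Bruhat-coordinate matrix coefficients is honestly equal to $\tau(a_2 w a_1 w^{-1})^{-1}$ up to a $K$-rational factor whose $S$-norm is bounded independently of $h$.
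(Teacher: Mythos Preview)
Your overall architecture is the same as the paper's: split $|\tau(a_2\, w a_1 b\, w^{-1})|$ multiplicatively and kill the $b$-factor with Lemma~\ref{l:root1}(iii). The divergence is entirely in how you bound the constant factor $|\tau(a_2\, w a_1\, w^{-1})|$.

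The paper does \emph{not} compute this quantity arithmetically. Instead it decomposes $u_2=u_2^-(u_2^+)^{-1}$ with $u_2^\pm\in U_{w,\pm}$, and by a short manipulation shows that
\[
\gamma_3^{-1}f_2^{-1}\gamma_2 f_1\gamma_1 \cdot\big(\text{something in }UMW^{-1}\big)=(u_2^-m_2)\,a_2\,w a_1 w^{-1}.
\]
Lemma~\ref{l:close} then places this point within a fixed distance of ${\bf G}(\mathcal O_S)$, while Lemma~\ref{l:far} says that points of $U_{w,-}MA(\tau,r)$ are arbitrarily far from ${\bf G}(\mathcal O_S)$ once $r$ is large. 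The tension forces $|\tau(a_2 w a_1 w^{-1})|<r$ for a uniform $r$. This is a geometric squeeze argument, not a matrix-coefficient computation, and it recycles lemmas already set up for the appendix.

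Your route is different in kind and, as written, has a real gap at exactly the point you flag. The claim that $\tau(a_2 w a_1 w^{-1})^{-1}$ is a polynomial in the entries of $h$ is not established, and it is not clear it is true in the generality required. Two issues: first, $\tau$ is a \emph{root}, not a dominant weight, so a single matrix coefficient in a fundamental representation yields $\lambda(\,\cdot\,)$ for some dominant $\lambda$, and passing from $\lambda$ to $\tau$ involves the Cartan matrix and hence both positive and negative exponents --- your product-formula bound goes in only one direction. Second, ${\bf G}$ need not be $K$-split, so ${\bf M}$ is nontrivial, the highest weight space may carry a nontrivial ${\bf M}$-action, and the ``Pl\"ucker coordinate'' identification you are invoking requires extra care. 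In the split rank-one case your computation does go through (e.g.\ for ${\bf SL_2}$ one finds $\tau(a_2 w a_1 w^{-1})^{-1}=c^2$ with $c$ an entry of $h$), but you would need substantially more representation-theoretic input to make the general claim precise. The paper's indirect argument sidesteps all of this.
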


\begin{proof}
Let $u_2 =u_2 ^- (u_2^+)^{-1}$ where $u_2^- \in U_{w,-}$ and  $u_2^+ \in U_{w,+}$. Since $MA$ normalizes $U_{w,+}$ we have $$^{a_2^{-1}m_2^{-1}}u_2^+ \in U_{w,+} \leq U_{(\Phi^w)^+}$$ 
Notice that if $\alpha \in \Phi$ and $v \in  {\bf U}_{(\alpha)} $, then $^{w^{-1}}v \in {\bf U}_{(\alpha ^{w^{-1}})}$. Therefore $$^{(m_2a_2w)^{-1}}u_2^+ = \,^{w^{-1}}(^{a_2^{-1}m_2^{-1}}u_2^+) \in U_{\Phi ^+}=U$$
 It follows that $${(u_1m_1)^{-1}} [^{(m_2a_2w)^{-1}}u_2^+] \in UM$$ and thus
$$^{a_1^{-1}}[{(u_1m_1)^{-1}} [^{(m_2a_2w)^{-1}}u_2^+]] \in UM$$
since $A$ normalizes $UM$.

By Lemma~\ref{l:close}, the following point is a bounded distance from ${\bf G}(\mathcal{O}_S)$:
\begin{align*}\gamma_3^{-1}f_2^{-1}\gamma_2 & f_1 \gamma_1\, ^{a_1^{-1}} [{(u_1 m_1)^{-1}} ^{(m_2a_2w)^{-1}}u_2^+] w^{-1}\\ 
& =  p_2wp_1\, ^{a_1^{-1}}[{(u_1m_1)^{-1}} [^{(m_2a_2w)^{-1}}u_2^+]]w^{-1} \\
& =u_2m_2a_2wu_1m_1a_1 \,^{a_1^{-1}}[{(u_1m_1)^{-1}} [^{(m_2a_2w)^{-1}}u_2^+]] w^{-1}\\
& =u_2m_2a_2w(u_1m_1)(u_1m_1)^{-1}  [^{(m_2a_2w)^{-1}}u_2^+]a_1w^{-1} \\
& =u_2m_2a_2w \, ^{(m_2a_2w)^{-1}}u_2^+ a_1w^{-1} \\
& = u_2 u_2^+ m_2a_2w a_1w^{-1} \\
& =(u_2^-m_2)a_2  w a_1 w^{-1}
\end{align*}

By Lemma~\ref{l:far}, there is some $r>1$ that is independent of $a_2  (w a_1w^{-1})\in A $ and such that  $|\tau _{J,I,w}(a_2 w a_1w^{-1} )| < r$.

By $(iii)$ of Lemma~\ref{l:root1} there is some $t>1$ such that $|\tau _{J,I,w} (a)| < 1/sr$ for any $a \in \,^wA_J^+(t)$. 

Therefore, $$ |\tau _{J,I,w} ( a_2 w a_1 w^{-1} a)|= |\tau _{J,I,w} ( a_2 w a_1 w^{-1})| |\tau _{J,I,w} ( a)| < 1/s$$
and thus $|-\tau _{J,I,w}(a_2 w a_1 w^{-1} a )| > s$.

\end{proof}

\begin{lemma}\label{l:noname} 
Suppose $I,J \subsetneq \Delta$. For any bounded set $B \se G$ there is some $t>1$ (independent of $\gamma_3^{-1}f_2^{-1}\gamma_2 f_1 \gamma_1$) such that if  $\gamma_3^{-1}f_2^{-1}\gamma_2 f_1 \gamma_1 \in {\bf G}(\mathcal{O}_S) F^{-1} {\bf G}(\mathcal{O}_S) F {\bf G}(\mathcal{O}_S) $ is not contained in ${\bf P}_J(K)$, then the sets $\gamma_3^{-1}f_2^{-1}\gamma_2 f_1 \gamma_1 UMA^+_J(t)B$  and $UMA^+_I(t)B$ are disjoint.

 \end{lemma}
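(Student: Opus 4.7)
\medskip

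The plan is to begin with the Bruhat decomposition of $g:=\gamma_3^{-1}f_2^{-1}\gamma_2 f_1\gamma_1$, writing $g=u_2m_2a_2wu_1m_1a_1$ exactly as in the setup of Lemma~\ref{l:chamber}. Since $\mathbf{P}(K)\se \mathbf{P}_J(K)$, the hypothesis $g\notin\mathbf{P}_J(K)$ forces the Weyl representative to satisfy $w\notin\mathbf{P}_J(K)$, so Lemma~\ref{l:chamber} and the root $\tau_{J,I,w}$ become available.

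I would then push $g$ across a generic element $umab\in UMA^+_J(t)B$. Exploiting that $A$ normalizes $U$ and centralizes $M$, that $M$ normalizes $U$, that $MA=Z_G(A)$ normalizes $^wU$ and equals $^w(MA)$, and that $^wM=M$ (since $M$ is the canonical anisotropic factor of $Z_G(A)$ and hence preserved under conjugation by $N_G(A)$), a routine rearrangement rewrites
$$gumab=u_2^-\,\bar v\,\check m\,\check a\,wb,$$
where $\bar v\in\,^wU$, $\check m\in M$, the factor $u_2^-\in U_{w,-}$ is the $U_{w,-}$-part of $u_2$ in the decomposition $U=U_{w,-}U_{w,+}$ (the $U_{w,+}$-part being absorbed into $\bar v$ using $U_{w,+}\leq\,^wU$), and the $A$-factor is $\check a = a_2\,wa_1aw^{-1}\in a_2wa_1A^+_J(t)w^{-1}$. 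Given any target $s>0$, Lemma~\ref{l:chamber} supplies a $t>1$ independent of $g$ for which $\check a\in A(-\tau_{J,I,w},s)$.

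Suppose for contradiction that $gumab=u'm'a'b'\in UMA^+_I(t)B$. Since $(u_2^-)^{-1}u'\in U$, rearranging yields $\bar v\check m\check a wb=\hat u\,m'a'b'$ for some $\hat u\in U$. The left side lies in $^wUMA(-\tau_{J,I,w},s)\cdot WB$, while the right side lies in $UMA(\tau_{J,I,w},1)\cdot B$ by Lemma~\ref{l:root1}$(ii)$ (which gives $A^+_I(t)\se A^+_I(1)\se A(\tau_{J,I,w},1)$). Applying Lemma~\ref{l:halfup} to the bounded set $B^*:=B\cup WB$ produces some $s^*>1$ for which $UMA(\tau_{J,I,w},1)B^*$ and $^wUMA(-\tau_{J,I,w},s^*)B^*$ are disjoint; choosing $t_w$ via Lemma~\ref{l:chamber} with parameter $s^*$ then delivers the contradiction. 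Since $W$ is finite, setting $t:=\max_{w\in W\setminus\mathbf{P}_J(K)} t_w$ produces the required uniform $t$.

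The main technical obstacle will be executing the rearrangement in the second paragraph cleanly. In particular, the identity $^wM=M$ is essential: without it, $\check a$ would pick up an $A$-valued correction depending on $m\in M$ (unbounded as $m$ varies), which would block the application of Lemma~\ref{l:chamber}. Once that rearrangement is in place, everything else reduces to combining Lemmas~\ref{l:root1}, \ref{l:chamber}, and~\ref{l:halfup} and exploiting the finiteness of $W$.
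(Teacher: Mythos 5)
Your argument is correct and essentially matches the paper's own proof: both Bruhat-decompose $g=p_2wp_1$, note that $g\notin\mathbf{P}_J(K)$ forces $w\notin\mathbf{P}_J(K)$, rewrite $g\,UMA^+_J(t)B$ as a left $U$-translate of $^wUM\,a_2wa_1A^+_J(t)B$, and conclude via Lemma~\ref{l:root1}$(ii)$, Lemma~\ref{l:chamber}, and Lemma~\ref{l:halfup} applied to the bounded set $B\cup WB$. The only differences --- your element-wise rearrangement and contradiction argument versus the paper's set-level identities, and your explicit maximum over the finitely many Weyl representatives (which the paper leaves implicit) --- are presentational.
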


\begin{proof}
If $\gamma_3^{-1}f_2^{-1}\gamma_2 f_1 \gamma_1$ is not contained in ${\bf P}_J(K)$ then $w \notin {\bf P}_J(K)$.

The group $A$ commutes with $M$ and it normalizes $U$ and $^wU$. The group $M$ normalizes $U$ and $^wM=M$. Thus,
\begin{align*}
\gamma_3^{-1}f_2^{-1}\gamma_2 f_1 \gamma_1 UMA^+_J(t)B & = p_2wp_1  UMA^+_J(t)B \\
&= p_2w  UMa_1A^+_J(t)B \\
&= p_2\,^wUMwa_1A^+_J(t)B \\
&= u_2\,^wUMa_2wa_1A^+_J(t)B 
\end{align*}

By Lemma~\ref{l:halfup}, there is some $s>1$  such that $UMA(\tau_{J,I,w},1)[B \cup wB]$  is disjoint from $^wUMA(-\tau_{J,I,w},s)[B\cup wB]$.

By Lemma~\ref{l:root1}, $ A^+_I(1) \se  A(\tau_{J,I,w},1)$. By  Lemma~\ref{l:chamber}, there is some $t>1$ such that   $a_2wa_1A^+_J(t)w^{-1} \se A(-\tau_{J,I,w},s)$. Therefore,  $u_2^{-1}UMA^+_I(1)B$ is disjoint from $^wUMa_2wa_1A^\uparrow_J(t)B$ which proves the lemma.

\end{proof}

\begin{lemma}\label{l:noname2} 
Suppose $I,J \subsetneq \Delta$ with $|I|=|J|$ and $I \neq J$. For any bounded set $B \se G$ there is some $t>1$ (independent of $\gamma_3^{-1}f_2^{-1}\gamma_2 f_1 \gamma_1$) such that if  $\gamma_3^{-1}f_2^{-1}\gamma_2 f_1 \gamma_1 \in {\bf G}(\mathcal{O}_S) F^{-1} {\bf G}(\mathcal{O}_S) F {\bf G}(\mathcal{O}_S) $ is contained in ${\bf P}_J(K)$ and ${\bf P}_I(K)$, then the sets $\gamma_3^{-1}f_2^{-1}\gamma_2 f_1 \gamma_1 A^+_J(t)B$  and $A^+_I(t)B$ are disjoint.

 \end{lemma}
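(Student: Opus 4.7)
The plan is to parallel the proof of Lemma~\ref{l:noname} while exploiting the stronger hypothesis that $g=\gamma_3^{-1}f_2^{-1}\gamma_2 f_1 \gamma_1$ lies in both ${\bf P}_I(K)$ and ${\bf P}_J(K)$. Standard BN-pair theory for the parabolics ${\bf P}_I,{\bf P}_J\supseteq{\bf P}$ gives ${\bf P}_I(K)\cap{\bf P}_J(K)={\bf P}_{I\cap J}(K)$, so $g\in{\bf P}_{I\cap J}(K)$ and the Weyl element $w$ in the Bruhat decomposition $g=p_2wp_1=u_2m_2a_2wu_1m_1a_1$ lies in the Weyl group of ${\bf M}_{I\cap J}$, i.e., the subgroup generated by reflections corresponding to roots in $I\cap J$. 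Since these reflections come from roots lying in both $I$ and $J$, they fix both $A_I$ and $A_J$ pointwise; in particular $w$ commutes with every element of $A_I\cup A_J$.

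Since $|I|=|J|$ and $I\neq J$, fix $\alpha\in I\setminus J$ and let $\tau$ be the highest root of the irreducible component of the sub-root-system $\Phi\cap[I]$ containing $\alpha$. Then $\tau\in[I]\cap\Phi^+$ with strictly positive coefficient on $\alpha$, so $|\tau(a_I)|=1$ for all $a_I\in A_I$, while $|\tau(a_J)|\geq t$ for all $a_J\in A_J^+(t)$ (using $\alpha\in\Delta-J$). Moreover, $\tau+\beta\notin\Phi$ for every $\beta\in[I\cap J]\cap\Phi^+$, since such a sum would be a root in $\Phi\cap[I]$ strictly higher than $\tau$ in its irreducible component. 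This last property mirrors Lemma~\ref{l:root1}$(i)$ and ensures ${\bf U}_{(\tau)}$ commutes with $U_{w,-}$, which for such $w$ is contained in $U\cap M_{I\cap J}$.

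Following the manipulation in the proof of Lemma~\ref{l:chamber}, I would set $X=(u_1m_1)^{-1}\cdot{}^{(m_2a_2w)^{-1}}u_2^+\in UM$ and $y={}^{a_1^{-1}}X\cdot w^{-1}\in UMW^{-1}$, together with the variant $y'={}^{(a_1a_J)^{-1}}X\cdot w^{-1}\in UMW^{-1}$. A direct computation using $wa_J=a_Jw$ yields the identity $(ga_J)y'=(gy)a_J$, while the usual cancellation shows $gy=u_2^-m_2\cdot a_2\,{}^wa_1\in U_{w,-}MA$. Assuming for contradiction that $ga_Jb_1=a_Ib_2$ for some $a_J\in A_J^+(t)$, $a_I\in A_I^+(t)$, $b_1,b_2\in B$, Lemma~\ref{l:close} bounds both $gy$ and $(gy)a_J$ within a uniform distance of ${\bf G}(\mathcal{O}_S)$. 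A Lemma~\ref{l:far}-type result with the above $\tau$ in place of $\tau_{J,I,w}$ (proved afresh using that ${\bf U}_{(\tau)}$ commutes with $U_{w,-}$) combined with a Lemma~\ref{l:chamber}-style bootstrap shows first that $|\tau(a_2\,{}^wa_1)|$ is uniformly bounded and then that $(gy)a_J\in U_{w,-}MA(\tau,t')$ with $t'\to\infty$ as $t\to\infty$, contradicting closeness to ${\bf G}(\mathcal{O}_S)$.

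The main obstacle is proving the Lemma~\ref{l:far}-type statement for the chosen $\tau$: the construction of $\tau_{J,I,w}$ in Lemma~\ref{l:root1} required $w\notin{\bf P}_J(K)$, whereas here $w$ lies in the Weyl group of ${\bf M}_{I\cap J}\subseteq{\bf M}_J$ and in particular in ${\bf P}_J(K)$. My replacement $\tau$ is designed to preserve the essential commuting property with $U_{w,-}$, so that the Lemma~\ref{l:far} proof (via an integral unipotent $\gamma\in{\bf U}_{(\tau)}(\mathcal{O}_S)$) carries over with only minor modifications, and the resulting threshold $t$ depends only on $B$, $F$, and ${\bf G}(\mathcal{O}_S)$.
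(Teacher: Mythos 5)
Your proposal takes a genuinely different route from the paper's, and it leaves a significant gap. The paper does not use the Bruhat decomposition $g=p_2wp_1$ at all here. Instead, it writes $p=\gamma_3^{-1}f_2^{-1}\gamma_2 f_1\gamma_1\in{\bf P}_{I\cap J}(K)$ via the \emph{Levi} decomposition $p=uma$ with $u\in U_{\Phi(I\cap J)^+}$, $m\in M_{I\cap J}$, $a\in A_{I\cap J}^+$. Since $A_I^+\leq A_{I\cap J}^+$ commutes with $ma$ and normalizes $U_{\Phi(I\cap J)^+}$, one gets $A_I^+(t)^{-1}pA_J^+(t)B\se U_{\Phi(I\cap J)^+}\,p\,A_I^+(t)^{-1}A_J^+(t)B$. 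This rearranges the problem so that the ``close to $\sz$'' side is $p^{-1}U_{\Phi(I\cap J)^+}BB^{-1}$ (Lemma~\ref{l:21}, since $U_{\Phi(I\cap J)^+}={\bf U}_{\Phi(I\cap J)^+}(\mathcal{O}_S)B_{I\cap J}$), and the ``far from $\sz$'' side is $A_I^+(t)^{-1}A_J^+(t)$. Choosing any $\tau\in[I]\cap\Phi(J)^+$ (nonempty since $I\nsubseteq J$) gives $A_I^+(t)^{-1}A_J^+(t)\se A(\tau,t)$, and then Lemma~\ref{l:far2}, the version of Lemma~\ref{l:far} with trivial unipotent part, finishes. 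No bespoke ``commuting with $U_{w,-}$'' argument is needed because after the rearrangement there is no unipotent factor to worry about on the $A(\tau,t)$ side.

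The gap in your proposal is exactly the point you flag yourself: you need a Lemma~\ref{l:far}-type statement for your custom $\tau$, but Lemma~\ref{l:root1}, which supplies the root with property~$(i)$, is only proved under the hypothesis $w\notin{\bf P}_J(K)$, which fails here. Your candidate (the highest root of a component of $\Phi\cap[I]$ containing $\alpha$) does plausibly commute with $U_{w,-}\leq U\cap M_{I\cap J}$, but proving this, and then re-running Lemma~\ref{l:far} and Lemma~\ref{l:chamber}-style bootstraps, is a substantial amount of work that you only sketch. You would also need to carefully repeat the Lemma~\ref{l:half} and Lemma~\ref{l:halfup} computations for your $\tau$. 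In short: your approach is plausible but left incomplete at the crucial step, and the paper's rearrangement shows the whole Bruhat/Weyl-element apparatus is unnecessary here -- the hypothesis $p\in{\bf P}_I(K)\cap{\bf P}_J(K)$ is strong enough to sidestep it entirely.
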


  \begin{proof}

We let  $p=\gamma_3^{-1}f_2^{-1}\gamma_2 f_1 \gamma_1$ Notice that $p \in {\bf P}_{I \cap J}(K)$ so that $p=uma$ where $u \in U_{\Phi(I\cap J)^+}$, $m \in M_{I\cap J}$, and $a \in A_{I \cap J}^+$.

Elements of $A^+_I \leq A^+_{I\cap J} $ commute with $ma$, and they normalize   $U_{\Phi(I\cap J)^+}$. Therefore, $A^+_I(t)^{-1}p \se U_{\Phi(I\cap J)^+} p A^+_I(t)^{-1} $. Hence, 
$$A^+_I(t)^{-1}p A^+_J(t) B \se U_{\Phi(I\cap J)^+} p A^+_I(t)^{-1} A^+_J(t) B$$ and the lemma will follow if we show that $p ^{-1} U_{\Phi(I\cap J)^+} B B^{-1}$ is disjoint from $A^+_I(t)^{-1} A^+_J(t) $.

Since ${\bf U}_{\Phi(I\cap J)^+}$ is a unipotent $K$-group, there is some compact set $B_{I\cap J} \se U_{\Phi(I\cap J)^+} $ such that ${\bf U}_{\Phi(I\cap J)^+}(\mathcal{O}_S)B_{I\cap J}= U_{\Phi(I\cap J)^+}$. Therefore,
$p ^{-1} U_{\Phi(I\cap J)^+} B B^{-1} $ is contained in $${\bf G}(\mathcal{O}_S) F {\bf G}(\mathcal{O}_S) F^{-1} {\bf G}(\mathcal{O}_S) {\bf U}_{\Phi(I\cap J)^+}(\mathcal{O}_S)B_{I\cap J}B B^{-1}$$
and thus is contained in a metric neighborhood of ${\bf G}(\mathcal{O}_S) $ by Lemma~\ref{l:21}.

The set $[I] \cap \Phi(J)^+$ is nonempty since $J $ does not contain $I$, and we choose $\tau \in [I] \cap \Phi(J)^+ $. Thus $A^+_I(t)^{-1} A^+_J(t)  \se A(\tau, t)$ and the lemma follows from Lemma~\ref{l:far2}.

 \end{proof}

\begin{lemma}\label{l:28}  Suppose $I,J \subsetneq \Delta$ with $|I|=|J|$.
Let $B \se G$ be a bounded set. There is a $t>1$ such that if $f_2^{-1}\gamma _2 f_1 \in F^{-1} {\bf G}(\mathcal{O}_S) F$ and either $f_2^{-1}\gamma _2 f_1 \notin {\bf P}_J(K)$, $f_2^{-1}\gamma _2 f_1 \notin {\bf P}_I(K)$,  or $J \neq I$, then the sets $f_2^{-1}\gamma_2 f_1 {\bf P}_J(\mathcal{O}_S)A_J^+(t)B$  and $ {\bf P}_I(\mathcal{O}_S)A_I^+(t)B$ are disjoint. \end{lemma}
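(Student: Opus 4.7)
The strategy is to reduce Lemma~\ref{l:28} to Lemmas~\ref{l:noname} and~\ref{l:noname2} by choosing $\gamma_1 := p_J$ and $\gamma_3 := p_I$ for arbitrary $p_J \in \mathbf{P}_J(\mathcal{O}_S)$ and $p_I \in \mathbf{P}_I(\mathcal{O}_S)$. Suppose toward a contradiction that the two sets fail to be disjoint, witnessed by such $p_J, p_I$, and set $\alpha := p_I^{-1} f_2^{-1}\gamma_2 f_1 p_J$. Since $p_I, p_J \in \mathbf{G}(\mathcal{O}_S)$, we have $\alpha \in \mathbf{G}(\mathcal{O}_S)F^{-1}\mathbf{G}(\mathcal{O}_S)F\mathbf{G}(\mathcal{O}_S)$, and the failure of disjointness is equivalent to $\alpha A_J^+(t)B \cap A_I^+(t)B \neq \emptyset$. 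Via the elementary identity $\alpha x = y \Leftrightarrow x = \alpha^{-1} y$, this is also equivalent to $\alpha^{-1} A_I^+(t) B \cap A_J^+(t) B \neq \emptyset$.

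I will show that one of three mutually exhaustive cases on $\alpha$ must occur, each ruled out by a prior lemma at a threshold that is uniform in $\alpha$. \textbf{First}, if $\alpha \notin \mathbf{P}_J(K)$, Lemma~\ref{l:noname} supplies $t_1 > 1$ with $\alpha UMA_J^+(t_1)B \cap UMA_I^+(t_1)B = \emptyset$; since $1 \in UM$, the inclusions $A_J^+(t_1)B \subseteq UMA_J^+(t_1)B$ and $A_I^+(t_1)B \subseteq UMA_I^+(t_1)B$ yield the desired disjointness. \textbf{Second}, if $\alpha \in \mathbf{P}_J(K) \cap \mathbf{P}_I(K)$ with $I \neq J$, Lemma~\ref{l:noname2} applies directly and furnishes $t_2 > 1$. \textbf{Third}, if $\alpha \in \mathbf{P}_J(K)$ while $\alpha \notin \mathbf{P}_I(K)$, apply Lemma~\ref{l:noname} to $\alpha^{-1} = p_J^{-1} f_1^{-1} \gamma_2^{-1} f_2 p_I$, which also lies in $\mathbf{G}(\mathcal{O}_S)F^{-1}\mathbf{G}(\mathcal{O}_S)F\mathbf{G}(\mathcal{O}_S)$, with the roles of $I$ and $J$ in the lemma interchanged; this produces $t_3 > 1$ making $\alpha^{-1} A_I^+(t_3) B \cap A_J^+(t_3) B = \emptyset$, which by the equivalence above forbids failure at threshold $t_3$.

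It remains to verify that the only possibility not listed above, namely $\alpha \in \mathbf{P}_J(K)$ with $I = J$, cannot arise under the hypotheses of Lemma~\ref{l:28}. When $I = J$, both $p_I$ and $p_J$ already lie in $\mathbf{P}_J(K)$, so $\alpha \in \mathbf{P}_J(K)$ would force $f_2^{-1}\gamma_2 f_1 \in \mathbf{P}_J(K) = \mathbf{P}_I(K)$, contradicting the assumed alternative (which in the case $I = J$ reduces to $f_2^{-1}\gamma_2 f_1 \notin \mathbf{P}_J(K)$). Taking $t := \max(t_1, t_2, t_3)$ yields a single threshold simultaneously ruling out all three cases.

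The main obstacle, besides organizing the case analysis, is handling the asymmetric third case via the $\alpha \leftrightarrow \alpha^{-1}$ symmetry and correctly invoking Lemma~\ref{l:noname} in the swapped form; uniformity of $t$ in $\gamma_2$, $f_1$, $f_2$, $p_I$, and $p_J$ is then automatic from the uniformity statements in Lemmas~\ref{l:noname} and~\ref{l:noname2}, which are uniform in their middle element.
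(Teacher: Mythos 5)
Your proof is correct and takes essentially the same route as the paper. Both reduce to Lemmas~\ref{l:noname} and~\ref{l:noname2} via a case split on $\alpha=\gamma_3^{-1}f_2^{-1}\gamma_2 f_1\gamma_1$, handle the asymmetric case by applying Lemma~\ref{l:noname} to $\alpha^{-1}$ with the roles of $I$ and $J$ interchanged, and close by checking that the excluded case ($I=J$ with $\alpha\in\mathbf{P}_J(K)$) would force $f_2^{-1}\gamma_2 f_1\in\mathbf{P}_J(K)=\mathbf{P}_I(K)$, contradicting the hypotheses.
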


\begin{proof}
By the previous two lemmas there is some $t>1$ such that for any  $\gamma _3 \in  {\bf P}_I(\mathcal{O}_S)$ and  $\gamma _1 \in  {\bf P}_J(\mathcal{O}_S)$ we have that $\gamma _3 ^{-1}f_2^{-1}\gamma _2 f_1 \gamma _1 A^+ _J(t)B$ is disjoint from  $A^+ _I(1)B$ as long as either $\gamma _3 ^{-1}f_2^{-1}\gamma _2 f_1 \gamma _1 \notin {\bf P}_J(K)$, $\gamma _1 ^{-1}f_1^{-1}\gamma _2^{-1} f_2 \gamma _3 \notin {\bf P}_I(K)$, or $I \neq J$.

If $I=J$ and $\gamma _3 ^{-1}f_2^{-1}\gamma _2 f_1 \gamma _1 \in {\bf P}_J(K)$, then 
$f_2^{-1}\gamma _2 f_1 \in \gamma _3 {\bf P}_J(K) \gamma _1^{-1}= {\bf P}_J(K)$.

If $I=J$ and $\gamma _1 ^{-1}f_1^{-1}\gamma _2^{-1} f_2 \gamma _3 \in {\bf P}_I(K)$, then 
$f_1^{-1}\gamma _2^{-1} f_2 \in \gamma _1 {\bf P}_I(K) \gamma _3^{-1}= {\bf P}_I(K)$, and hence 
$f_2^{-1}\gamma _2 f_1 \in {\bf P}_I(K)$.

\end{proof}

At this point, we have done most of the work that was required in this subsection. The next 3 lemmas provide some cosmetic reformulation of what we have done.

We let $A^\uparrow _I(t)=\{\, a \in A_I \mid |\alpha (a)|_v\geq t {\text{ if }} \alpha \in \Delta -I\ {\text{ and }} v \in S\}$.

\begin{lemma}\label{l:300}
For $I\subsetneq \Delta$ there is some bounded set $B_{A,I} \se A_I$ containing $1$ such that for any $t>1$, $A_I^+(t)\se {\bf A}_I(\mathcal{O}_S)A_I^\uparrow(\sqrt[|S|]{t})B_{A,I}$.
\end{lemma}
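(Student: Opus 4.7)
The strategy is to use Dirichlet's $S$-unit theorem on ${\bf A}_I$ together with an averaging argument to convert a bound on the product $\prod_v |\alpha(a_v)|_v$ into simultaneous bounds on each factor $|\alpha(a_v)|_v$, at the cost of multiplying $a$ by an element of ${\bf A}_I(\mathcal{O}_S)$ and a bounded correction.

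First I would introduce the logarithm map
\[
L \colon A_I \to \RR^{(\Delta - I) \times S}, \qquad a \mapsto \bigl(\log|\alpha(a_v)|_v\bigr)_{\alpha \in \Delta - I,\, v \in S}.
\]
Since for $b \in {\bf A}_I(\mathcal{O}_S)$ each value $\alpha(b)$ lies in $\mathcal{O}_S^\times$, the product formula forces the image $L({\bf A}_I(\mathcal{O}_S))$ into the subspace $H \se \RR^{(\Delta - I) \times S}$ cut out by $\sum_{v \in S} x_{\alpha,v}=0$ for each $\alpha \in \Delta - I$. Because ${\bf A}_I$ is $K$-split of dimension $|\Delta - I|$, the $S$-unit theorem gives that $L({\bf A}_I(\mathcal{O}_S))$ is a lattice in $H$ of full rank $(|S|-1)|\Delta-I|$. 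Fix $D>0$ such that every vector of $H$ is within sup-norm $D$ of a lattice point; this $D$ depends only on the lattice and not on $a$ or $t$.

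Next, given $a \in A_I^+(t)$, write $s_{\alpha,v}=\log|\alpha(a_v)|_v$, so that $\sum_{v\in S} s_{\alpha,v} \geq \log t$ for each $\alpha \in \Delta - I$. Form the centering vector $c_{\alpha,v}=\tfrac{1}{|S|}\sum_{w \in S} s_{\alpha,w}-s_{\alpha,v}$; then $c \in H$ and pointwise $s_{\alpha,v}+c_{\alpha,v}$ equals the average $\tfrac{1}{|S|}\sum_w s_{\alpha,w} \geq \tfrac{\log t}{|S|}$. Picking $b \in {\bf A}_I(\mathcal{O}_S)$ with $\|L(b)-c\|_\infty \leq D$, I get $\log|\alpha((ab)_v)|_v \geq \tfrac{\log t}{|S|}-D$ for every pair $(\alpha,v)$.

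Finally I would absorb the residual $D$ into a bounded factor. Since ${\bf A}_I$ is $K$-split, the characters in $\Delta - I$ induce isomorphisms ${\bf A}_I(K_v) \cong (K_v^\times)^{|\Delta-I|}$ for each $v \in S$, so I can choose $c' \in A_I$ whose $L$-image has each coordinate in $[-D-\varepsilon,\,0]$ (where $\varepsilon = \varepsilon(S)$ absorbs the discreteness of value groups at non-archimedean $v$) such that $a' = ab(c')^{-1}$ satisfies $\log|\alpha(a'_v)|_v \geq \tfrac{\log t}{|S|}$ for every $(\alpha,v)$, i.e., $a' \in A_I^\uparrow(\sqrt[|S|]{t})$. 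Such $c'$ lies in a bounded subset $B_{A,I} \se A_I$ (naturally containing $1$) depending only on $D$, $S$, and the local fields $K_v$, and then $a = b^{-1} a' c' \in {\bf A}_I(\mathcal{O}_S)\, A_I^\uparrow(\sqrt[|S|]{t})\, B_{A,I}$. The crux is the uniformity of $D$ in both $a$ and $t$, which is automatic from cocompactness of $L({\bf A}_I(\mathcal{O}_S))$ in $H$; the remaining steps are routine bookkeeping.
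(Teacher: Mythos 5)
Your proposal is correct and takes essentially the same approach as the paper: both rest on Dirichlet's $S$-unit theorem (cocompactness of the log-image of ${\bf A}_I(\mathcal{O}_S)$ in the product-formula hyperplane, which the paper packages as Lemma~\ref{l:tproj}) and the product formula to rebalance the local norms $|\alpha(a_v)|_v$. The only cosmetic difference is that you center symmetrically over all of $S$, whereas the paper singles out the place $w$ where $|\alpha(a)|_w$ is largest and adjusts at the remaining places.
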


\begin{proof}
If $|S|=1$, then $A_I^\uparrow(t)=A_I^+(t)$ and the lemma follows.

If $|S|>1$, then let $a \in A_I^+(t)$ and choose $w \in S$ such that $|\alpha (a)|_w\geq|\alpha (a)|_v $ for $v \in S-\{w\}$.
 
 By Lemma~\ref{l:tproj}, there is some $a _0 \in {\bf A}_I(\mathcal{O}_S)$ such that $|\alpha (a_0 a )|_v \geq \sqrt[|S|]{t} $ for all  $v \in S-\{w\}$ and such that the distance between $|\alpha (a_0 a )|_w$ and $ \sqrt[|S|]{t} $ is uniformly bounded. Thus, there is some bounded $a_b \in A_I$ such that $a_0aa_b \in A_I^\uparrow(\sqrt[|S|]{t})$. Hence, $a \in a_0^{-1}A_I^\uparrow(\sqrt[|S|]{t})a_b^{-1}$.
 
\end{proof}

For $c>0$ we let $B_I(c)=\{\, u \in U_{\Phi(I)^+} \mid ||u||\leq c \,\}$. Note that $B_I(c)$ is compact, and since ${\bf U}_{\Phi(I)^+}$ is unipotent, there is some $c_0$  such that ${\bf U}_{\Phi(I)^+}(\mathcal{O}_S) B_I(c_0) = U_{\Phi(I)^+}$. We let $B_I=B(c_0)$. Notice that if $a \in A_I^\uparrow(1)$ and $b \in B_I$ then $a^{-1}ba \in B_I$ so that $B_IA_I^\uparrow(t) \se A_I^\uparrow(t)B_I$ when $t>1$.

\begin{lemma}\label{l:ray}
Given $I\subsetneq \Delta$ There is some bounded set $B \se G$ such that if $t>1$  then
$$U_{\Phi(I)^+}{\bf M}_I(\mathcal{O}_S)A_I^+(t)\,\,\se \,\, {\bf P}_{I}(\mathcal{O}_S)  A_J^\uparrow(\sqrt[|S|]{t})B$$
\end{lemma}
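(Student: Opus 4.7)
The plan is to reduce to a cleaner containment and then shuttle integer and bounded factors from left to right. The three ingredients are Lemma~\ref{l:300}, the cocompactness decomposition $U_{\Phi(I)^+}={\bf U}_{\Phi(I)^+}(\mathcal{O}_S)B_I$, and the absorption rule $B_IA_I^\uparrow(t)\se A_I^\uparrow(t)B_I$ for $t\geq 1$; the last two are recorded just before the lemma statement.

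First, since $\mathbf{M}_I$ normalizes $\mathbf{U}_{\Phi(I)^+}$ we have $U_{\Phi(I)^+}{\bf M}_I(\mathcal{O}_S) = {\bf M}_I(\mathcal{O}_S)U_{\Phi(I)^+}$. Combined with ${\bf M}_I(\mathcal{O}_S)\se{\bf P}_I(\mathcal{O}_S)$ and the fact that ${\bf P}_I(\mathcal{O}_S)$ is a group, this reduces the problem to showing
$$U_{\Phi(I)^+}\,A_I^+(t)\,\se\,{\bf P}_I(\mathcal{O}_S)\,A_I^\uparrow(\sqrt[|S|]{t})\,B$$
for some bounded $B\se G$ independent of $t$ (the $J$ appearing in the statement is a typo for $I$).

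Second, given $ua\in U_{\Phi(I)^+}A_I^+(t)$, I would apply Lemma~\ref{l:300} to write $a=a_0a_1b_a$ with $a_0\in{\bf A}_I(\mathcal{O}_S)$, $a_1\in A_I^\uparrow(\sqrt[|S|]{t})$, and $b_a\in B_{A,I}$, and decompose $u=u_0b_u$ with $u_0\in{\bf U}_{\Phi(I)^+}(\mathcal{O}_S)$ and $b_u\in B_I$. Since ${\bf A}_I$ normalizes $U_{\Phi(I)^+}$, the conjugate $a_0^{-1}b_ua_0$ again lies in $U_{\Phi(I)^+}$, and a second application of the cocompact decomposition yields $a_0^{-1}b_ua_0=u'_*b'_*$ with $u'_*\in{\bf U}_{\Phi(I)^+}(\mathcal{O}_S)$ and $b'_*\in B_I$. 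Assembling,
$$ua=(u_0a_0u'_*)\,b'_*\,a_1\,b_a,$$
where $u_0a_0u'_*\in{\bf P}_I(\mathcal{O}_S)$ since each factor is an integer point of a subgroup of $\mathbf{P}_I$. The absorption rule then lets me rewrite $b'_*a_1=a_1(a_1^{-1}b'_*a_1)$ with $a_1^{-1}b'_*a_1\in B_I$, giving $ua\in{\bf P}_I(\mathcal{O}_S)\,A_I^\uparrow(\sqrt[|S|]{t})\,(B_IB_{A,I})$; hence $B=B_IB_{A,I}$ works uniformly in $t$.

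The only delicate point is that conjugation by $a_0\in{\bf A}_I(\mathcal{O}_S)$ need not preserve $B_I$: the product formula forces $|\alpha(a_0)|=1$ for each root $\alpha$, but the individual local factors $|\alpha(a_0)|_v$ may be arbitrarily large, so $a_0^{-1}B_Ia_0$ is unbounded in general. The resolution is exactly the second integer/compact split above --- the unbounded integer residue is absorbed into ${\bf P}_I(\mathcal{O}_S)$, and only the $B_I$ part is then transported past the truly contracting $a_1$.
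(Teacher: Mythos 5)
Your proof is correct, and you are also right that the $J$ in the statement is a typo for $I$. You use exactly the same three ingredients as the paper --- Lemma~\ref{l:300}, the cocompact decomposition $U_{\Phi(I)^+}={\bf U}_{\Phi(I)^+}(\mathcal{O}_S)B_I$, and the absorption rule $B_I A_I^\uparrow(s)\se A_I^\uparrow(s)B_I$ --- and you land on the same bounded set $B=B_IB_{A,I}$. The only difference is the order of operations: you split $u=u_0b_u$ \emph{before} commuting past $a_0\in{\bf A}_I(\mathcal{O}_S)$, which creates the ``delicate point'' you correctly flag ($a_0^{-1}B_Ia_0$ need not be bounded), and which you then cure with a second application of the cocompact decomposition. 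The paper avoids this wrinkle entirely by moving the \emph{whole group} $U_{\Phi(I)^+}$ to the right past ${\bf M}_I(\mathcal{O}_S){\bf A}_I(\mathcal{O}_S)$ first (legal since these normalize $U_{\Phi(I)^+}$), and only afterward splitting $U_{\Phi(I)^+}={\bf U}_{\Phi(I)^+}(\mathcal{O}_S)B_I$; at that point the only conjugation left is by $a_1\in A_I^\uparrow(\sqrt[|S|]{t})$, which really does contract $B_I$. Your version is a correct elementwise unwinding, just with one extra re-splitting step that the paper's ordering makes unnecessary.
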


\begin{proof}
Because $U_{\Phi(I)^+}$ is normalized by  ${\bf M}_I(\mathcal{O}_S) \leq M_I$ and $ {\bf A}_I(\mathcal{O}_S)\leq A_I$, Lemma~\ref{l:300} yields the following inclusions of sets
\begin{align*}
U_{\Phi(I)^+}{\bf M}_I(\mathcal{O}_S) A_I^+(t) 
& \,\se\, U_{\Phi(I)^+} {\bf M}_I(\mathcal{O}_S) {\bf A}_I(\mathcal{O}_S)A_I^\uparrow(\sqrt[|S|]{t})B_{A,I}\\
& = {\bf M}_I(\mathcal{O}_S) {\bf A}_I(\mathcal{O}_S)U_{\Phi(I)^+} A_I^\uparrow(\sqrt[|S|]{t})B_{A,I}\\
& = {\bf M}_I(\mathcal{O}_S) {\bf A}_I(\mathcal{O}_S){\bf U}_{\Phi(I)^+}(\mathcal{O}_S)B_I A_I^\uparrow(\sqrt[|S|]{t})B_{A,I}\\
& \,\se\, {\bf P}_{I}(\mathcal{O}_S)A_I^\uparrow(\sqrt[|S|]{t})B_I B_{A,I}\\ 
\end{align*}

\end{proof}

And now we have the lemma that this subsection was devoted to in

\begin{lemma}\label{l:31}
 Let $I ,J \subsetneq \Delta $ with $|I|=|J|$. Let $B \se G$ be a bounded set. There is a $t>1$ such that if $f_2^{-1}\gamma _2 f_1 \in F^{-1} {\bf G}(\mathcal{O}_S) F$ and either $f_2^{-1}\gamma _2 f_1 \notin {\bf P}_J(K)$,  $f_2^{-1}\gamma _2 f_1 \notin {\bf P}_I(K)$,  or $J \neq I$, then  the sets $f_2^{-1}\gamma_2 f_1 U_{\Phi(J)^+}{\bf M}_J(\mathcal{O}_S)A_J^+(t)B$  and $ U_{\Phi(I)^+}{\bf M}_I(\mathcal{O}_S)A_I^+(t)B$ are disjoint. \end{lemma}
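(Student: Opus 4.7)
The plan is to deduce Lemma~\ref{l:31} from Lemma~\ref{l:28} by using Lemma~\ref{l:ray} to rewrite the set $U_{\Phi(I)^+}\mathbf{M}_I(\mathcal{O}_S)A_I^+(t)$ (and its analog for $J$) in the ``$\mathbf{P}(\mathcal{O}_S)A^+B'$'' form to which Lemma~\ref{l:28} applies. The only adjustment needed is a rescaling of the parameter $t$, which is harmless because we are free to choose it in the conclusion.

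Concretely, I would first apply Lemma~\ref{l:ray} to both $I$ and $J$ to obtain a single bounded set $B_0 \se G$ such that, for every $t>1$,
$$U_{\Phi(I)^+}\mathbf{M}_I(\mathcal{O}_S)A_I^+(t) \,\se\, \mathbf{P}_I(\mathcal{O}_S)A_I^\uparrow(\sqrt[|S|]{t})B_0,$$
and similarly with $J$ in place of $I$. The trivial inequality $|\alpha(a)| = \prod_{v\in S}|\alpha(a_v)|_v \geq (\sqrt[|S|]{t})^{|S|} = t$ for $a \in A_I^\uparrow(\sqrt[|S|]{t})$ and $\alpha \in \Delta - I$ gives $A_I^\uparrow(\sqrt[|S|]{t}) \se A_I^+(t)$, hence
$$U_{\Phi(I)^+}\mathbf{M}_I(\mathcal{O}_S)A_I^+(t)B \,\se\, \mathbf{P}_I(\mathcal{O}_S)A_I^+(t)B_0B,$$
and likewise for $J$. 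Now I invoke Lemma~\ref{l:28} with its bounded set taken to be $B_0B$, obtaining some $t_0 > 1$, independent of $f_2^{-1}\gamma_2f_1 \in F^{-1}\mathbf{G}(\mathcal{O}_S)F$, such that whenever one of the three listed hypotheses holds, the sets $f_2^{-1}\gamma_2f_1\,\mathbf{P}_J(\mathcal{O}_S)A_J^+(t_0)B_0B$ and $\mathbf{P}_I(\mathcal{O}_S)A_I^+(t_0)B_0B$ are disjoint. Choosing $t \geq t_0$ gives $A_I^+(t) \se A_I^+(t_0)$ and $A_J^+(t) \se A_J^+(t_0)$, so combining with the containments above yields the desired disjointness of $f_2^{-1}\gamma_2f_1\,U_{\Phi(J)^+}\mathbf{M}_J(\mathcal{O}_S)A_J^+(t)B$ and $U_{\Phi(I)^+}\mathbf{M}_I(\mathcal{O}_S)A_I^+(t)B$.

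I do not anticipate any substantive obstacle: Lemma~\ref{l:28} already provides the uniformity in $f_2^{-1}\gamma_2f_1$, and Lemma~\ref{l:ray} is uniform in $t$. The $|S|$-th root degradation in the parameter appearing in Lemma~\ref{l:ray} is absorbed simply by enlarging the threshold $t$ in the conclusion. The lemma is therefore essentially a packaging step that converts the parabolic-region disjointness statement of Lemma~\ref{l:28} into the form actually used to define the parabolic regions $R_{\bf Q}(t)$ in Section~\ref{s:red}.
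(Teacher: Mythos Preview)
Your proposal is correct and is exactly the paper's approach: the paper's proof consists of the single sentence ``As $A_J^\uparrow(\sqrt[|S|]{t})$ is contained in $A_J^+(t)$, the proof is a straightforward combination of Lemmas~\ref{l:28} and~\ref{l:ray},'' and you have simply spelled out that combination, including the same containment $A_I^\uparrow(\sqrt[|S|]{t}) \se A_I^+(t)$ and the enlargement of the bounded set to $B_0B$.
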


\begin{proof}
As $A_J^\uparrow(\sqrt[|S|]{t})$ is contained in $A_J^+(t)$, the proof is a straightforward combination of Lemmas~\ref{l:28} and~\ref{l:ray}.
\end{proof}

\subsection{Coarse stabilization of parabolic regions under parabolic translations}
In the next lemma we will prove that translating the parabolic region associated to ${\bf P}_J$ by elements of ${\bf P}_J(K) \, \cap \, F^{-1}{\bf G}(\mathcal{O}_S)F$ stabilize the parabolic region up to a bounded Hausdorff distance.

\begin{lemma}\label{l:32} Let $J \subsetneq \Delta$. There is a bounded set $B \se G$ such that if $t>1$ and $f_2^{-1}\gamma  f_1 \in F^{-1} {\bf G}(\mathcal{O}_S) F$ with $f_2^{-1}\gamma  f_1 \in {\bf P}_J(K)$, then the set $f_2^{-1}\gamma f_1 U_{\Phi(J)^+}{\bf M}_J(\mathcal{O}_S)A_J^+(t)$  
is contained in $U_{\Phi(J)^+}{\bf M}_J(\mathcal{O}_S)A_J^+(t)B$.
\end{lemma}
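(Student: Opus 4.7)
The plan is to use the Levi decomposition of $\mathbf{P}_J$ to reduce the left action of $p = f_2^{-1}\gamma f_1$ on the parabolic region to controlled actions by bounded elements and by integer Levi pieces. Write $p = u_p m_p a_p$ with $u_p \in U_{\Phi(J)^+}$, $m_p \in \mathbf{M}_J(K)$, and $a_p \in \mathbf{A}_J(K)$. Since $m_p a_p$ normalizes $U_{\Phi(J)^+}$ (absorbing $u_p$ into $U_{\Phi(J)^+}$) and $a_p \in \mathbf{A}_J$ commutes with $\mathbf{M}_J$, one obtains
$$p \cdot U_{\Phi(J)^+}\mathbf{M}_J(\mathcal{O}_S)A_J^+(t) \;=\; U_{\Phi(J)^+}\cdot m_p \mathbf{M}_J(\mathcal{O}_S)\cdot a_p A_J^+(t).$$
Thus it suffices to exhibit bounded sets $B_M \subseteq M_J$ and $B_A \subseteq A_J$, uniform in $p$ and $t$, with $m_p \mathbf{M}_J(\mathcal{O}_S) \subseteq \mathbf{M}_J(\mathcal{O}_S) B_M$ and $a_p A_J^+(t) \subseteq A_J^+(t) B_A$.

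The heart of the argument is to establish factorizations $m_p = \mu \tilde m$ and $a_p = \eta \tilde a$, with $\mu \in \mathbf{M}_J(\mathcal{O}_S)$, $\eta \in \mathbf{A}_J(\mathcal{O}_S)$, and $\tilde m \in M_J$, $\tilde a \in A_J$ lying in a bounded subset independent of $p$. The input is Lemma~\ref{l:21}: applied with $X=F^{-1}$ it gives a bounded $B_0$ with $F^{-1}\mathbf{G}(\mathcal{O}_S)F \subseteq \mathbf{G}(\mathcal{O}_S) B_0 F \subseteq \mathbf{G}(\mathcal{O}_S) B_0'$, so $p$ lies within bounded distance of $\mathbf{G}(\mathcal{O}_S)$. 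One combines this with the standard integral Levi decomposition $\mathbf{P}_J(\mathcal{O}_S)$ being commensurable with $\mathbf{U}_{\Phi(J)^+}(\mathcal{O}_S) \cdot \mathbf{M}_J(\mathcal{O}_S) \cdot \mathbf{A}_J(\mathcal{O}_S)$, in order to promote bounded-closeness of $p$ to bounded-closeness of its Levi components. The main obstacle is that being close to $\mathbf{G}(\mathcal{O}_S)$ does not immediately give closeness to $\mathbf{P}_J(\mathcal{O}_S)$ within $\mathbf{P}_J(K)$; one must arrange the nearby integer point to lie in $\mathbf{P}_J(K)$. This should follow by adapting the technique of Lemma~\ref{l:close}: writing $p \cdot U_{\Phi(J)^+}\mathbf{M}_J(\mathcal{O}_S)$ as a subset of $F^{-1}\mathbf{G}(\mathcal{O}_S)F \cdot U \cdot \mathbf{G}(\mathcal{O}_S)$ which is within bounded distance of $\mathbf{G}(\mathcal{O}_S)$, the constraint $p \in \mathbf{P}_J(K)$ forces the approximating integer elements to lie in $\mathbf{P}_J(\mathcal{O}_S)$ up to a bounded correction.

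Granting the factorizations, the conclusion is direct. For the $A_J$-part: $a_p A_J^+(t) = \eta \tilde a A_J^+(t) = \eta A_J^+(t) \tilde a = A_J^+(t) \tilde a$, using that $|\alpha(\eta)| = 1$ for every $\alpha \in \Phi$ by the product formula (as in the proof of Lemma~\ref{l:half}) so $\eta$ preserves $A_J^+(t)$, and that $A_J$ is abelian. For the $M_J$-part: $m_p \mathbf{M}_J(\mathcal{O}_S) = \mu \tilde m \mathbf{M}_J(\mathcal{O}_S) = \tilde m \mathbf{M}_J(\mathcal{O}_S)\cdot\mu'$ up to absorbing $\mu$, and since $\tilde m \in M_J$ is bounded, each $\tilde m \gamma$ ($\gamma \in \mathbf{M}_J(\mathcal{O}_S)$) is within distance $d(\tilde m, 1)$ of $\gamma$, hence lies in $\mathbf{M}_J(\mathcal{O}_S)\cdot B_M$ for $B_M := \{x\in M_J : d(x,1)\le d(\tilde m,1)\}$, a bounded subset of $M_J$. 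Since $M_J$ and $A_J$ commute, $B_M \cdot A_J^+(t) = A_J^+(t)\cdot B_M$, and $B = B_M \cdot \tilde a$ is the required bounded subset of $G$ witnessing the lemma.
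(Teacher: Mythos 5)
Your proposal identifies a sensible high-level plan — factor $p=u_pm_pa_p$ via the Levi decomposition and control each piece — but it has two real gaps, one of which is the actual content of the lemma.

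First, the factorizations $m_p = \mu\tilde m$ with $\mu\in \mathbf{M}_J(\mathcal{O}_S)$ and $a_p=\eta\tilde a$ with $\eta\in\mathbf{A}_J(\mathcal{O}_S)$, both with uniformly bounded tails, are exactly where the work is, and you flag this as ``the main obstacle'' without resolving it. Lemma~\ref{l:close} (or Lemma~\ref{l:21}) only gives proximity of $p$ to $\mathbf{G}(\mathcal{O}_S)$; there is no ingredient in the paper that, by itself, upgrades ``$p\in\mathbf{P}_J(K)$ is close to $\mathbf{G}(\mathcal{O}_S)$'' to ``$p$ is close to $\mathbf{P}_J(\mathcal{O}_S)$,'' and since $\mathbf{A}_J(\mathcal{O}_S)$ is not cocompact in $A_J$ (Dirichlet's units theorem gives the wrong dimension) this is far from automatic. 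The paper instead goes directly after the parabolic lattice: it writes $\gamma=\lambda\gamma_0$ with $\lambda\in(\,^{f_2}\mathbf{P}_J)(\mathcal{O}_S)$ for a fixed $\gamma_0$, then chooses finite-index subgroups $\Lambda_1\le\mathbf{P}_J(\mathcal{O}_S)$ and $\Lambda_2\le(\,^{f_2}\mathbf{P}_J)(\mathcal{O}_S)$ with explicit finite coset systems $\{g_i\},\{h_j\}$ so that $(\gamma_0f_1)\Lambda_1(\gamma_0f_1)^{-1}$ and $f_2^{-1}\Lambda_2f_2$ land in the appropriate integral parabolics. This produces a genuinely finite set $\{f_2^{-1}h_j(\gamma_0f_1)g_i\}\subseteq\mathbf{P}_J(K)$, which is the bounded ``tail'' you would need — and finiteness (not mere boundedness) is what makes the commensurability bookkeeping close up, and lets one then push this tail past $A_J^\uparrow(\sqrt[|S|]{t})$ (via Lemma~\ref{l:ray}, using the one-sided condition $|\alpha(a)|_v\ge t$ so that conjugation contracts $U_{\Phi(J)^+}$).

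Second, even granting your factorizations, the closing step for the $M_J$-part is incorrect. You claim $\tilde m\gamma$ is within distance $d(\tilde m,1)$ of $\gamma$, ``hence lies in $\mathbf{M}_J(\mathcal{O}_S)B_M$.'' With the left-invariant metric, $d(\tilde m\gamma,\gamma)=d(1,\gamma^{-1}\tilde m\gamma)$, which is unbounded as $\gamma$ ranges over $\mathbf{M}_J(\mathcal{O}_S)$, so this inference fails. To get $\tilde m\,\mathbf{M}_J(\mathcal{O}_S)\subseteq\mathbf{M}_J(\mathcal{O}_S)B_M$ you need a commensurability argument as in Lemma~\ref{l:21}, which again requires $\tilde m$ to range over a \emph{finite} set of $K$-rational points, not merely a bounded subset. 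So the proposal does not yet yield a proof, and the paper's route through explicit finite coset systems is what supplies the missing finiteness.
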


\begin{proof}
Notice that $^{f_2^{-1}\gamma  f_1}{\bf P}_J={\bf P}_J$ and hence $^{\gamma  f_1}{\bf P}_J=\,^{f_2}{\bf P}_J$. 

Let $\gamma _0 $ be a fixed element of $ {\bf{G}}(\mathcal{O}_S)$ with $f_2^{-1}\gamma_0  f_1 \in {\bf P}_J(K)$. Then $^{\gamma _0  f_1}{\bf P}_J=\,^{f_2}{\bf P}_J$, and by letting  $ \lambda =\gamma \gamma_0^{-1}$ we have that  $$^{\lambda f_2}{\bf P}_J=\,^{\lambda \gamma _0  f_1}{\bf P}_J=\,^{\gamma f_1}{\bf P}_J=\,^{ f_2}{\bf P}_J$$ which implies that $\lambda \in (\,^{f_2}{\bf P}_J)(O_S)$.

Let $\Lambda _1 \se {\bf {P}}_J(\mathcal{O}_S)$ be a finite index subgroup such that $(\gamma _0 f_1)\Lambda _1 (\gamma _0 f_1)^{-1}$ is contained in $ (\, ^{\gamma _0 f_1} {\bf {P}}_J)(\mathcal{O}_S)$ and let $g_1,\ldots , g_m$ be a set of right coset representatives for $\Lambda _1$ in  ${\bf {P}}_J(\mathcal{O}_S)$.

Let $\Lambda _2 \se (\,^{f_2}{\bf {P}}_J)(\mathcal{O}_S)$ be a finite index subgroup such that $ f_2^{-1}\Lambda _2  f_2$ is contained in $ {\bf {P}}_J(\mathcal{O}_S)$ and let $h_1,\ldots , h_\ell$ be a set of right coset representatives for $\Lambda _2$ in  $ (\,^{f_2}{\bf {P}}_J)(\mathcal{O}_S)$.

Each $g_i \in  {\bf P}_J(K)$,  
$^{ \gamma _0  f_1}{\bf P}_J=\,^{f_2}{\bf P}_J$, and $h_j \in (\,^{f_2}{\bf P}_J)(K)$ for all $j$. Therefore, $f_2^{-1}h_j(\gamma _0 f_1)g_i $ normalizes, and hence is contained in, $ {\bf P}_J(K)$. We choose a bounded set $B' \se P_J$ such that $$\bigcup _{i,j} f_2^{-1}h_j(\gamma _0 f_1)g_i  \, \se \, B'$$ As in the comments preceding Lemma~\ref{l:ray}, we may assume that $$B'A_J^\uparrow(\sqrt[|S|]{t}) \se A_J^\uparrow(\sqrt[|S|]{t}) B'$$

We have the following inclusion of sets:
\begin{align*}
f_2^{-1}\gamma f_1 {\bf P}_J&(\mathcal{O}_S) A_J^\uparrow(\sqrt[|S|]{t}) \\
& = f_2^{-1}\lambda (\gamma _0 f_1) {\bf P}_J(\mathcal{O}_S) A_J^\uparrow(\sqrt[|S|]{t}) \\
& = \bigcup _i f_2^{-1}\lambda (\gamma _0 f_1) \Lambda _1g_i A_J^\uparrow(\sqrt[|S|]{t}) \\
& = \bigcup _i f_2^{-1}\lambda (\gamma _0 f_1) \Lambda _1 (\gamma _0 f_1)^{-1} (\gamma _0 f_1)g_i A_J^\uparrow(\sqrt[|S|]{t}) \\
&  \,\se\,  \bigcup _i f_2^{-1}\lambda (\, ^{\gamma _0 f_1} {\bf {P}}_J)(\mathcal{O}_S) (\gamma _0 f_1) g_i A_J^\uparrow(\sqrt[|S|]{t}) \\
& = \bigcup _i f_2^{-1}\lambda (\, ^{f_2} {\bf {P}}_J)(\mathcal{O}_S) (\gamma _0 f_1) g_i A_J^\uparrow(\sqrt[|S|]{t}) \\
& = \bigcup _i f_2^{-1} (\, ^{f_2} {\bf {P}}_J)(\mathcal{O}_S) (\gamma _0 f_1) g_i A_J^\uparrow(\sqrt[|S|]{t}) \\
& = \bigcup _{i,j} f_2^{-1} \Lambda _2 h_j (\gamma _0 f_1) g_i A_J^\uparrow(\sqrt[|S|]{t}) \\
& = \bigcup _{i,j} f_2^{-1} \Lambda _2 f_2 f_2^{-1}h_j (\gamma _0 f_1) g_i A_J^\uparrow(\sqrt[|S|]{t}) \\
& = \bigcup _{i,j} {\bf {P}}_J(\mathcal{O}_S) f_2^{-1}h_j (\gamma _0 f_1) g_i  A_J^\uparrow(\sqrt[|S|]{t}) \\
& \, \se \, {\bf {P}}_J(\mathcal{O}_S) B' A_J^\uparrow(\sqrt[|S|]{t}) \\
& \,\se\, {\bf {P}}_J(\mathcal{O}_S)  A_J^\uparrow(\sqrt[|S|]{t}) B '
\end{align*}

Let $B$ be as in Lemma~\ref{l:ray}. Then \begin{align*} f_2^{-1}\gamma f_1 U_{\Phi(J)^+}{\bf M}_J(\mathcal{O}_S)A_J^+(t) & \,\se \, f_2^{-1}\gamma f_1 {\bf P}_{I}(\mathcal{O}_S)  A_I^\uparrow(\sqrt[|S|]{t})B \\
& \, \se \, {\bf {P}}_J(\mathcal{O}_S)  A_J^\uparrow(\sqrt[|S|]{t}) B 'B\\
& \, \se \, U_{\Phi(J)^+}{\bf M}_J(\mathcal{O}_S)A_J^+(t) B 'B
\end{align*}
\end{proof}

\subsection{Proof of Proposition~\ref{p:prune}}

\begin{proof}
For part $(i)$, notice that $\bf M$ is $K$-anisotropic. Therefore there is a bounded set $B_M \se M$ such that $M={\bf M}(\mathcal{O}_S)B_M$. If $B$ is as in  Theorem~\ref{t:red}, then we let $B_0=B_MB$. Thus, \begin{align*} {\bf G}(\mathcal{O}_S)FU{\bf M}(\mathcal{O}_S)A_\emptyset^+B_0 
&={\bf G}(\mathcal{O}_S)FU{\bf M}(\mathcal{O}_S)A_\emptyset^+B_MB \\
&= {\bf G}(\mathcal{O}_S)FU{\bf M}(\mathcal{O}_S)B_MA_\emptyset^+B \\
&= {\bf G}(\mathcal{O}_S)FU MA_\emptyset^+B \\
& =G 
\end{align*}

For $(ii)$, suppose $B_n$ and $N_n$ are given. Let ${\bf Q}=\,^{\gamma _1 f_1}{\bf P}_J$ and ${\bf Q'}=\,^{\gamma _2 f_2}{\bf P}_I$. Assume that ${\bf Q}\neq {\bf Q'}$ and that $|I|=|J|=n$.

Let $\gamma f \in \Lambda _{\bf Q}$. If $t>1$, then Lemma~\ref{l:32} shows that there is some bounded set $B \se G$ such that $\gamma  f U_{\Phi(J)^+}{\bf M}_J(\mathcal{O}_S)A_J^+(t)B_n$ is contained in $\gamma _1 f_1 U_{\Phi(J)^+}{\bf M}_J(\mathcal{O}_S)A_J^+(t)BB_n$. Therefore,
 $R _{\bf Q} (t)B_n$ is contained in $\gamma _1 f_1 U_{\Phi(J)^+}{\bf M}_J(\mathcal{O}_S)A_J^+(t)BB_n$.
 
Similarly, 
$R _{\bf Q'} (t)B_n$ is contained in  $\gamma _2 f_2 U_{\Phi(I)^+}{\bf M}_I(\mathcal{O}_S)A_I^+(t)BB_n$.

If $ f_2^{-1}\gamma _2^{-1} \gamma _1 f_1\in {\bf P}_J (K)$ and $I=J$ then $ ^{f_2^{-1}\gamma _2^{-1} \gamma _1 f_1}{\bf P}_J = {\bf P}_I$ which contradicts that  ${\bf Q}\neq {\bf Q'}$. Thus, we can apply
Lemma~\ref{l:31}  to find some $t_n>1$ such that $f_2^{-1}\gamma _2^{-1} R_{\bf Q}(t_n)B_nB_N$ is 
disjoint from $f_2^{-1}\gamma _2^{-1} R_{\bf Q '}(t_n)B_nB_N$ where $B_N$ is a neighborhood of $1\in G$ of radius $N_n$.

For part $(iii)$, notice in the above that we could choose $t_n$ to be arbitrarily large. 

Let $w \in W$ represent the longest element of the Weyl group  so that $\tau_{J,I,w}$ is the highest root with respect to $\Phi^+$ and $U_{w,-}=U$. Then by Lemma~\ref{l:far}, there is some $t_n>1$ such that
$ U_{\Phi(I)^+}A(\tau _{J,I,w},t_n)$ is arbitrarily far from  ${\bf G}(\mathcal{O}_S)$. Hence, $ U_{\Phi(I)^+}A^+_I(t_n)$, and thus $ {\bf M}_I(\mathcal{O}_S)U_{\Phi(I)^+}A^+_I(t_n)$, is arbitrarily far from  ${\bf G}(\mathcal{O}_S)$. Then by Lemma~\ref{l:21}, $ U_{\Phi(I)^+}{\bf M}_I(\mathcal{O}_S)A^+_I(t_n)$ is arbitrarily far from  $F^{-1}{\bf G}(\mathcal{O}_S){\bf G}(\mathcal{O}_S)$ which proves this part of the proposition.

For parts $(iv)$ and $(v)$, let $ I \subsetneq \Delta$. Let $\gamma \in {\bf G}(\mathcal{O}_S)$, $f \in F$, $u \in U_{\Phi(I)^+}$, $m \in {\bf M}_I(\mathcal{O}_S)$, $a \in A_I^+$, and $b \in B_n$. Furthermore, assume that $|\alpha (a)|<2t_n$ for some $\alpha \in \Delta -I$.  Let $J = I \cup \alpha$.

There is a bounded neighborhood of the identity $B_{I, \alpha} \se A$ depending on $2t_n$ such that $a \in A^+_{J}B_{I, \alpha}$. Thus, $$\gamma f u m a b \in \gamma f U_{\Phi(I)^+}{\bf M}_I(\mathcal{O}_S)A^+_{J}B_{I, \alpha}B_n$$

The $K$-group ${\bf U} _{\Phi(I)^+\cap[J]^+} $ is unipotent, so there is some bounded set $B_J \se U _{\Phi(I)^+\cap[J]^+} \se M_J$ containing the identity such that ${\bf U} _{\Phi(I)^+\cap[J]^+}  (\mathcal{O}_S) B_J =U _{\Phi(I)^+\cap[J]^+} $. Recall that  ${\bf M}_I(\mathcal{O}_S)$ normalizes $U_{\Phi(I)^+} $. Therefore, 
\begin{align*} U_{\Phi(I)^+}{\bf M}_I(\mathcal{O}_S) & ={\bf M}_I(\mathcal{O}_S)U_{\Phi(I)^+} \\
&=  {\bf M}_I(\mathcal{O}_S)U_{\Phi(J)^+} U_{\Phi(I)^+\cap[J]^+} \\
& \,\se \, {\bf M}_J(\mathcal{O}_S)U_{\Phi(J)^+} U_{\Phi(I)^+\cap[J]^+} \\
& = U_{\Phi(J)^+} {\bf M}_J(\mathcal{O}_S) U_{\Phi(I)^+\cap[J]^+} \\
& = U_{\Phi(J)^+} {\bf M}_J(\mathcal{O}_S) {\bf U}_{\Phi(I)^+\cap[J]^+} (\mathcal{O}_S) B_J \\
& = U_{\Phi(J)^+} {\bf M}_J(\mathcal{O}_S)  B_J
\end{align*}

Since $B_J \se M_J$ commutes with $A^+_J$, we have  $$\gamma f u m a b \in \gamma f U_{\Phi(J)^+} {\bf M}_J(\mathcal{O}_S)  A^+_{J}B_JB_{I, \alpha}B_n$$

If we let $B'$ be the product over $I$ and $\alpha$ of the sets $B_JB_{I, \alpha}$, then we can let $B_{n+1}=B'B_n$.

In the case when $|I|=|\Delta| -1$, notice that for $|J|=|\Delta|$ the groups $U_{\Phi(J)^+}$ and  $A^+_J$ are trivial and ${\bf M}_J={\bf G}$, so $$\gamma f u m a b \in \gamma f {\bf G}(\mathcal{O}_S)  B_{|\Delta|}$$ The lemma follows after enlarging $ B_{|\Delta|}$ in view of Lemma~\ref{l:21}.

For $(vi)$,  suppose ${\bf Q}= \, ^{\gamma f}{\bf P}_I$ for some $\gamma  \in {\bf G}(\mathcal{O}_S)$ and $f \in F$. As in the proof of $(ii)$,  Lemma~\ref{l:32} implies that there is a bounded set $B \se G$ such that
$R _{\bf Q} (t_n)B_n$ is contained in $\gamma  f U_{\Phi(I)^+}{\bf M}_I(\mathcal{O}_S)A_I^+(t_n)BB_n$, and there is an obvious quasi-isometry from  $\gamma  f U_{\Phi(I)^+}{\bf M}_I(\mathcal{O}_S)A_I^+(t_n)BB_n$ to the space $ U_{\Phi(I)^+}{\bf M}_I(\mathcal{O}_S)A_I^+$ that satisfies the proposition.

\end{proof}

\end{document}